\theoremstyle{plain}
\tikzset{join/.code=\tikzset{after node path={%
\ifx\tikzchainprevious\pgfutil@empty\else(\tikzchainprevious)%
edge[every join]#1(\tikzchaincurrent)\fi}}}
\tikzset{>=stealth',every on chain/.append style={join},
         every join/.style={->}}
\tikzset{
    >=stealth',
    punkt/.style={
           rectangle,
           rounded corners,
           draw=black, very thick,
           text width=6.5em,
           minimum height=2em,
           text centered},
    pil/.style={
           ->,
           thick,
           shorten <=2pt,
           shorten >=2pt,}
}
\newcommand{\bee}{\begin{enumerate}}
\newcommand{\eee}{\end{enumerate}}
\newcommand{\benn}{\begin{equation*}}
\newcommand{\eenn}{\end{equation*}}
\newcommand{\be}{\begin{equation}}
\newcommand{\ee}{\end{equation}}
\newcommand{\bean}{\begin{eqnarray}}
\newcommand{\eean}{\end{eqnarray}}
\newcommand{\bea}{\begin{eqnarray*}}
\newcommand{\eea}{\end{eqnarray*}}
\newcommand{\p}{\partial}
\newcommand{\ra}{\rangle}
\newcommand{\Ci}{C^{\infty}}
\newcommand{\N}{\mathbb{N}}
\newcommand{\R}{\mathbb{R}}
\newcommand{\K}{\mathbb{K}}
\newcommand{\Q}{\mathbb{Q}}
\newcommand{\lp}{\left(}
\newcommand{\rp}{\right)}
\newcommand{\op}[1]{\!\!\mathop{\rm ~#1}\nolimits}
\newcommand{\mbi}{\mathbb{I}}
\mathchardef\za="710B  
\mathchardef\zb="710C  
\mathchardef\zg="710D  
\mathchardef\zd="710E  
\mathchardef\zve="710F 
\mathchardef\zz="7110  
\mathchardef\zh="7111  
\mathchardef\zy="7112 
\mathchardef\zi="7113  
\mathchardef\zk="7114  
\mathchardef\zl="7115  
\mathchardef\zm="7116  
\mathchardef\zn="7117  
\mathchardef\zx="7118  
\mathchardef\zp="7119  
\mathchardef\zr="711A  
\mathchardef\zs="711B  
\mathchardef\zt="711C  
\mathchardef\zu="711D  
\mathchardef\zf="711E 
\mathchardef\zq="711F  
\mathchardef\zc="7120  
\mathchardef\zw="7121  
\mathchardef\ze="7122  
\mathchardef\zvy="7123  
\mathchardef\zvw="7124  
\mathchardef\zvr="7125 
\mathchardef\zvs="7126 
\mathchardef\zvf="7127  
\mathchardef\zG="7000  
\mathchardef\zD="7001  
\mathchardef\zY="7002  
\mathchardef\zL="7003  
\mathchardef\zX="7004  
\mathchardef\zP="7005  
\mathchardef\zS="7006  
\mathchardef\zU="7007  
\mathchardef\zF="7008  
\mathchardef\zW="700A  
\newcommand{\cyclic}{\mathop{\kern0.9ex{{+}
\kern-2.15ex\raise-.25ex\hbox{\Large\hbox{$\circlearrowright$}}}}\limits}
\newcommand{\cE}{{\cal E}}
 \newcommand{\cS}{{\cal S}}
 \newcommand{\cH}{{\cal H}}
 \newcommand{\cP}{{\cal P}}
 \newcommand{\cC}{{\cal C}}
 \newcommand{\cA}{{\cal A}}
 \newcommand{\cM}{{\cal M}}
 \newcommand{\cD}{{\cal D}}
 \newcommand{\cO}{{\cal O}}
 \newcommand{\cB}{{\cal B}}
 \newcommand{\cQ}{{\cal Q}}
 \newcommand{\cI}{{\cal I}}
\newtheorem{rem}{Remark}
\newtheorem{theo}[rem]{Theorem}
\newtheorem{prop}[rem]{Proposition}
\newtheorem{lem}[rem]{Lemma}
\newtheorem{cor}[rem]{Corollary}
\newtheorem{ex}[rem]{Example}
\newtheorem{defi}[rem]{Definition}
\newcommand{\h}{\op{Hom}}
\newcommand{\0}{\otimes}
\newcommand{\id}{\op{id}}
\newcommand{\coker}{\op{coker}}
\newcommand{\im}{\op{im}}
\DeclareMathAlphabet{\mathpzc}{OT1}{pzc}{m}{it}
 \newcommand{\cJ}{\mathcal{J}}
 \newcommand{\colim}{\op{colim}}
\begin{document}
\title{\bf Koszul-Tate resolutions as cofibrant replacements of algebras over differential operators}
\date{}
\author{Gennaro di Brino, Damjan Pi\v{s}talo, and Norbert Poncin\footnote{University of Luxembourg, Mathematics Research Unit, 4364 Esch-sur-Alzette, Luxembourg, gennaro.dibrino@gmail.com, damjan.pistalo@uni.lu, norbert.poncin@uni.lu}}

\maketitle

\begin{abstract} Homotopical geometry over differential operators is a convenient setting for a coordinate-free investigation of nonlinear partial differential equations modulo symmetries. One of the first issues one meets in the functor of points approach to homotopical $\cD$-geometry, is the question of a model structure on the category $\tt DGAlg(\cD)$ of differential non-negatively graded $\cO$-quasi-coherent sheaves of commutative algebras over the sheaf $\cD$ of differential operators of an appropriate underlying variety $(X,\cO)$. We define a cofibrantly generated model structure on $\tt DGAlg(\cD)$ via the definition of its weak equivalences and its fibrations, characterize the class of cofibrations, and build an explicit functorial `cofibration - trivial fibration' factorization. We then use the latter to get a functorial model categorical Koszul-Tate resolution for $\cD$-algebraic `on-shell function' algebras (which contains the classical Koszul-Tate resolution). The paper is also the starting point for a homotopical $\cD$-geometric Batalin-Vilkovisky formalism.
\end{abstract}

\vspace{2mm} \noindent {\bf MSC 2010}: 18G55, 16E45, 35A27, 32C38, 16S32, 18G10\smallskip

\noindent{\bf Keywords}: Differential operator, $\cD$-module, model category, relative Sullivan $\cD$-algebra, homotopical geometry, $\cD$-geometry, functor of points, Koszul-Tate resolution, Batalin-Vilkovisky formalism
\thispagestyle{empty}

{\tableofcontents}

\section{Introduction}

The solution functor of a {\it linear} {\small PDE} $D\cdot m=0$ is a functor $\op{Sol}:{\tt Mod}(\cal D)\to {\tt Set}$ defined on the category of left modules over the ring $\cD$ of linear differential operators of a suitable underlying space: for $D\in\cD$ and $M\in{\tt Mod}(\cD)$, we have $$\op{Sol}(M)=\{m\in M: D\cdot m=0\}\;.$$ For a {\it polynomial} {\small PDE}, we get a representable functor $\op{Sol}:{\tt Alg}(\cD)\to {\tt Set}$ defined on the category of $\cD$-algebras, i.e., of commutative monoids in ${\tt Mod}(\cD)$. According to \cite{BD}, the solution functor of a {\it nonlinear} {\small PDE} should be viewed as a `locally representable' sheaf $\op{Sol}:{\tt Alg}({\cal D})\to {\tt Set}$. To allow for still more general spaces, sheaves ${\tt Alg}({\cal D})\to {\tt SSet}$ valued in simplicial sets, or sheaves ${\tt DGAlg}({\cal D})\to {\tt SSet}$ on (the opposite of) the category ${\tt DGAlg}({\cal D})$ of differential graded $\cal D$-algebras, have to be considered.\medskip

More precisely, {when constructing a derived algebraic variant of the jet bundle approach to the Lagrangian Batalin-Vilkovisky formalism}, not, as usual, in the world of function algebras, but, dually, on the space side, we first consider the quotient of the infinite jet space by the global gauge symmetries. It turns out \cite{BPP3} that this quotient should be thought of as a 1-geometric derived $X$-$\cD_X$-stack, where $X$ is an underlying smooth affine algebraic variety. This new homotopical algebraic $\cD$-geometry provides in particular a convenient way to encode total derivatives and it allows actually to recover the classical Batalin-Vilkovisky complex as a specific case of its general constructions \cite{PP17}. In the functor of points approach to spaces, the derived $X$-$\cD_X$-stacks $F$ are those presheaves $F:{\tt DGAlg}(\cD)\to \tt SSet$ that satisfy the fibrant object (sheaf-)condition for the local model structure on the presheaf category $\tt Fun(DGAlg(\cD),SSet)$ -- the category of derived $X$-$\cD_X$-stacks is in fact the homotopy category of this model category of functors. More precisely, the choice of an adequate model (pre-)topology {allows us} to construct the local model structure, via a double Bousfield localization, from the global model structure of the considered presheaf category, which is implemented `object-wise' by the model structure of the target category $\tt SSet$. The first of the two Bousfield localizations is the localization of this global model structure with respect to the weak equivalences of the (category opposite to the) source category $\tt DGAlg(\cD)$. Furthermore, the $\cD$-geometric counterpart of an algebra $\Ci(\zS)$ of on-shell functions is an algebra $A\in{\tt Alg}(\cD)\subset {\tt DGAlg}(\cD)$, and it appears \cite{PP} that the Koszul-Tate resolution of $\Ci(\zS)$ corresponds to the cofibrant replacement of $A$ in a coslice category of $\tt DGAlg(\cD)$.\medskip

In view of the two preceding reasons, it is clear that our first task is the definition of a model structure on ${\tt DGAlg}(\cD)$ ({we draw the attention of the reader to the fact that we will use two different definitions of model categories, namely the definition of \cite{GS} and that of \cite{Hov} -- for the details we refer to Appendix \ref{ModCat})}. In the present paper, we give an explicit description of a cofibrantly generated model structure on the category ${\tt DGAlg}(\cD)$ of differential non-negatively graded $\cO$-quasi-coherent sheaves of commutative algebras over the sheaf $\cD$ of differential operators of a smooth affine algebraic variety $(X,\cO)$. In particular, we characterize the cofibrations as the retracts of the relative Sullivan $\cD$-algebras and we give an explicit functorial `{$\,$\small Cof -- TrivFib}' factorization (as well as the corresponding functorial cofibrant replacement functor -- which is specific to our setting and is of course different from the one provided, for arbitrary cofibrantly generated model categories, by the small object argument).\medskip 

To develop the afore-mentioned homotopical $\cD$-geometry, we have to show inter alia that the triplet $\tt (DGMod(\cD),DGMod(\cD),DGAlg(\cD))$ is a homotopical algebraic context \cite{TV08}. This includes proving that the model category $\tt DGAlg(\cD)$ is proper and that the base change functor $\cB\0_\cA-\,$, from modules in $\tt DGMod(\cD)$ over $\cA\in\tt DGAlg(\cD)$ to modules over $\cB\in\tt \cA\downarrow DGAlg(\cD)$, preserves weak equivalences. These results \cite{BPP3} are based on our characterization of cofibrations in $\tt DGAlg(\cD)$, as well as on the explicit functorial `{$\,$\small Cof -- TrivFib}' factorization.\medskip 

Notice finally that our two assumptions -- smooth and affine -- on the underlying variety $X$ are necessary. Exactly the same smoothness condition is indeed used in \cite{BD}$\,$[Remark p.56], since for an arbitrary singular scheme $X$, the notion of left $\cD_X$-module is meaningless. On the other hand, the assumption that $X$ is affine is needed to {substitute global sections of sheaves to the sheaves themselves}, i.e., to replace the category of differential non-negatively graded $\cO$-quasi-coherent sheaves of commutative algebras over the sheaf $\cD$ by the category of differential non-negatively graded commutative algebras over the ring $\cD(X)$ of global sections of $\cD$. However, this confinement is not merely a comfort solution: the existence of the projective model structure -- that we transfer from $\tt DGMod(\cD)$ to $\tt DGAlg(\cD)$ -- requires that the underlying category has enough projectives, and this is in general not the case for a category of sheaves over a not necessarily affine scheme \cite{Gil06}, \cite[Ex.III.6.2]{Har}. In addition, the confinement to the affine case allows us to use the artefacts of the model categorical environment, as well as to extract the fundamental structure of the main actors of the considered problem, which may then be extended to an arbitrary smooth scheme $X$ \cite{PP}.\medskip

Let us still stress that the special behavior of the noncommutative ring $\cD$ turns out to be a source of possibilities, as well as a source of problems. For instance, a differential graded commutative algebra over a field or a commutative ring $k$ is a commutative monoid in the category of differential graded $k$-modules. The extension of this concept to noncommutative rings $R$ is problematic, since the category of differential graded (left) $R$-modules is not symmetric monoidal. In the case $R=\cD$, we deal with differential graded (left) $\cD$-modules and these {\it are} symmetric monoidal -- and also closed. However, the tensor product and the internal Hom are taken, not over $\cD$, but over $\cO$: one considers the $\cO$-modules given, for $M,N\in\tt DGMod(\cD)$, by $M\0_\cO N$ and $\h_{\cO}(M,N)$, and shows that their $\cO$-module structures can be extended to $\cD$-module structures {\cite{HTT}}. This and other -- in particular related -- specificities must be kept in mind throughout the whole paper.

\section{Conventions and notation}

According to the anglo-saxon nomenclature, we consider the number 0 as being neither positive, nor negative.\medskip

All the rings used in this text are implicitly assumed to be unital.

\section{Sheaves of modules}\label{ShMod}

\newcommand{\cR}{{\cal R}}
\newcommand{\cF}{{\cal F}}

Let $\tt Top$ be the category of topological spaces and, for $X\in \tt Top$, let ${\tt Open}_X$ be the category of open subsets of $X$. If $\cR_X$ is a sheaf of rings, a {\bf left $\cR_X$-module} is a {\it sheaf $\cP_X$, such that, for each $U\in{\tt Open}_X$, $\cP_X(U)$ is an $\cR_X(U)$-module, and the $\cR_X(U)$-actions are compatible with the restrictions}. We denote by ${\tt Mod}(\cR_X)$ the abelian category of $\cR_X$-modules and of their (naturally defined) morphisms.\medskip

In the following, we omit subscript $X$ if no confusion arises.\medskip

If $\cP,\cQ\in {\tt Mod}(\cR)$, the (internal) Hom {denoted by} ${\cH}om_{\cR}(\cP,\cQ)$ is the sheaf of abelian groups (of $\cR$-modules, i.e., is the element of ${\tt Mod}(\cR)$, if $\cR$ is commutative) that is defined by \be{\cH}om_{\cR}(\cP,\cQ)(U):=\op{Hom}_{\cR|_U}(\cP|_U,\cQ|_U)\;,\label{HomSh}\ee $U\in{\tt Open}_X$. The {\small RHS} is made of the morphisms of (pre)sheaves of $\cR|_U$-modules, i.e., of the families $\zf_V:\cP(V)\to \cQ(V)$, $V\in{\tt Open}_U$, of $\cR(V)$-linear maps that commute with restrictions. Note that ${\cH}om_{\cR}(\cP,\cQ)$ is a sheaf of abelian groups, whereas $\op{Hom}_{\cR}(\cP,\cQ)$ is the abelian group of morphisms of (pre)sheaves of $\cR$-modules. We thus obtain a bi-functor \be\label{HomFun}{\cal H}om_{\cR}(\bullet,\bullet): ({\tt Mod}(\cR))^{\op{op}}\times {\tt Mod}(\cR)\to {\tt Sh}(X)\;,\ee valued in the category of sheaves of abelian groups, which is left exact in both arguments. \medskip

Further, if $\cP\in{\tt Mod}(\cR^{\op{op}})$ and $\cQ\in{\tt Mod}(\cR)$, we denote by $\cP\otimes_\cR\cQ$ the sheaf of abelian groups (of $\cR$-modules, if $\cR$ is commutative) associated to the presheaf \be\label{TensSh}(\cP\oslash_\cR\cQ)(U):=\cP(U)\otimes_{\cR(U)}\cQ(U)\;,\ee $U\in{\tt Open}_X$. The bi-functor \be\label{TensFun}\bullet\0_{\cR}\bullet:{\tt Mod}(\cR^{\op{op}})\times{\tt Mod}(\cR)\to {\tt Sh}(X)\;\ee is right exact in its two arguments.\medskip

If $\cS$ is a sheaf of commutative rings and $\cR$ a sheaf of rings, and if $\cS\to\cR$ is a morphism of sheafs of rings, whose image is contained in the center of $\cR$, we say that $\cR$ is a sheaf of $\cS$-algebras. Remark that, in this case, the above functors ${\cH}om_{\cR}(\bullet,\bullet)$ and $\bullet\0_\cR\bullet$ are valued in ${\tt Mod}(\cS)$.

\section{$\cD$-modules and $\cD$-algebras}

Depending on the author(s), the concept of $\cD$-module is considered over a base space $X$ that is a finite-dimensional smooth \cite{Cos} or complex \cite{KS} manifold, or a smooth algebraic variety \cite{HTT} or scheme \cite{BD}, over a fixed base field $\K$ of characteristic zero. We denote by $\cO_X$ (resp., $\Theta_X$, $\cD_X$) the sheaf of functions (resp., vector fields, differential operators acting on functions) of $X$, and take an interest in the category ${\tt Mod}(\cO_X)$ (resp., ${\tt Mod}(\cD_X)$) of $\cO_X$-modules (resp., $\cD_X$-modules).\medskip

Sometimes a (sheaf of) $\cD_X$-module(s) is systematically required to be {\it coherent} or {\it quasi-coherent} as (sheaf of) $\cO_X$-module(s). In this text, we will explicitly mention such extra assumptions.

\subsection{Construction of $\cD$-modules from $\cO$-modules}\label{D-ModulesAlgebras}

It is worth recalling the following

\begin{prop}\label{DModFlatConnSh} Let $\cM_X$ be an $\cO_X$-module. A left $\,\cD_X$-module structure on $\cM_X$ that extends its $\cO_X$-module structure is equivalent to a $\K$-linear morphism
$$\nabla :  \Theta_X \to {\cal E}nd_\K(\cM_X)\;,$$ such that, for all $f\in\cO_X$, $\zy,\zy'\in\Theta_X$, and all $m\in\cM_X$,
\begin{enumerate}
\item $\nabla_{f\zy}\,m = f\cdot\nabla_\zy m\,,$
\item $\nabla_\zy(f\cdot m)=f\cdot\nabla_\zy m+\zy(f)\cdot m\,,$
\item $\nabla_{[\zy,\zy']}m=[\nabla_\zy,\nabla_{\zy'}]m\,.$
\end{enumerate}
\end{prop}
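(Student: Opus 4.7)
The plan is to exploit the fact that $\cD_X$ is generated, as a sheaf of $\K$-algebras, by the subsheaves $\cO_X$ and $\Theta_X$, modulo the relations $f\cdot\zy-\zy\cdot f=-\zy(f)$ and $\zy\cdot\zy'-\zy'\cdot\zy=[\zy,\zy']$ for $f\in\cO_X$ and $\zy,\zy'\in\Theta_X$ (this is the content of the standard PBW-type description of $\cD_X$, valid locally on the smooth variety $X$). Since the statement is about extending an $\cO_X$-action to a $\cD_X$-action, the two directions of the equivalence correspond respectively to restricting, and then extending, the action from the whole of $\cD_X$ to its generating subsheaves.

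For the forward implication, assume a left $\cD_X$-action on $\cM_X$ extending the given $\cO_X$-module structure is given. Because $\Theta_X\subset\cD_X$, the formula $\nabla_\zy m:=\zy\cdot m$ defines a $\K$-linear morphism $\nabla:\Theta_X\to{\cal E}nd_\K(\cM_X)$. Property (1) is just associativity of the $\cD_X$-action combined with the identity $f\zy=f\cdot\zy$ in $\cD_X$; property (2) is obtained by writing $\zy\cdot(f\cdot m)=(\zy\cdot f)\cdot m$ and substituting the commutation relation $\zy\cdot f=f\cdot\zy+\zy(f)$; property (3) follows in the same way from $[\zy,\zy']=\zy\cdot\zy'-\zy'\cdot\zy$ in $\cD_X$.

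For the backward implication, assume $\nabla$ satisfies (1)--(3). The idea is to define the action of $\cD_X$ on $\cM_X$ using the order filtration $\cD_X^{\le k}$. On $\cD_X^{\le 0}=\cO_X$ one takes the given $\cO_X$-action; for a vector field $\zy\in\Theta_X\subset\cD_X^{\le 1}$ one sets $\zy\cdot m:=\nabla_\zy m$; and for an operator of order $k\ge 2$ one extends inductively, using that locally every such operator can be written as a sum of terms of the form $f\cdot\zy_1\cdots\zy_k$, to which one associates the composition $f\cdot\nabla_{\zy_1}\circ\cdots\circ\nabla_{\zy_k}$. The first-order consistency requirements between $\cO_X$ and $\Theta_X$ in $\cD_X$ are precisely (1) (compatibility with left multiplication by functions on $\zy$) and (2) (the commutator $[\zy,f]=\zy(f)$ in $\cD_X$ translates into the Leibniz rule), while the relation $[\zy,\zy']=\zy\cdot\zy'-\zy'\cdot\zy$ translates into (3). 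Sheafifying this local construction yields the required $\cD_X$-module structure, and by construction it extends the initial $\cO_X$-action.

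The main obstacle is the last step: one must verify that the prescribed action on generators actually descends to a well-defined action of the full sheaf $\cD_X$, i.e.\ that the relations of $\cD_X$ are respected to all orders. This is a local question, so one may reduce to an affine chart where $\cD_X$ admits an explicit presentation in terms of local coordinates and their corresponding partial derivatives, and where the needed relations in $\cD_X$ can be generated, via a routine induction on the order, from the first-order commutation identities that properties (1)--(3) already encode. Equivalently, one can phrase this as the universal property of $\cD_X$ viewed as the enveloping algebra of the Atiyah-Lie algebroid $\Theta_X$ acting on $\cO_X$: any $\K$-linear map out of $\Theta_X$ compatible with (1), (2) and (3) extends uniquely to an algebra action of $\cD_X$ compatible with the given $\cO_X$-action.
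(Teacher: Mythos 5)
Your proposal is correct and follows essentially the same route as the paper: the paper's Appendix \ref{D-modules} proves the statement by observing that $\cD_X$ is generated by $\cO_X$ and $\Theta_X$ modulo the commutation relations $f\zy=f\circ\zy$, $\zy f=f\zy+\zy(f)$, $\zy\zy'=\zy'\zy+[\zy,\zy']$, so that conditions (1)--(3) are exactly the compatibility relations needed for the prescribed action on generators to extend (uniquely) to all of $\cD_X$, with the sheaf-theoretic formulation handled by compatibility with restrictions. Your additional remarks on the order filtration and the enveloping algebra of the Lie algebroid $\Theta_X$ are consistent elaborations of the same idea rather than a different argument.
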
\medskip

In the following, we omit again subscript $X$, whenever possible.\medskip

In Proposition \ref{DModFlatConnSh}, the target ${\cE}nd_\K(\cM)$ is interpreted in the sense of Equation (\ref{HomSh}), and $\nabla$ is viewed as a morphism of sheaves of $\K$-vector spaces. Hence, $\nabla$ is a family $\nabla^U$, $U\in{\tt Open}_X$, of $\K$-linear maps that commute with restrictions, and $\nabla^U_{\zy_U}$, $\zy_U\in\Theta(U)$, is a family $(\nabla^U_{\zy_U})_V$, $V\in{\tt Open}_U$, of $\K$-linear maps that commute with restrictions. It follows that $\lp\nabla^U_{\zy_U}m_U\rp|_V=\nabla^V_{\zy_U|_V}m_U|_V\,$, with self-explaining notation: {\it the concept of sheaf morphism captures the locality of the connection $\nabla$ with respect to both arguments}.\medskip

Further, the requirement that the conditions (1) -- (3) be satisfied for all $f\in\cO$, $\zy,\zy'\in\Theta$, and $m\in\cM$, means that they must hold for any $U\in{\tt Open}_X$ and all $f_U\in\cO(U)$, $\zy_U,\zy_U'\in\Theta(U)$, and $m_U\in\cM(U)$.\medskip

We now detailed notation used in Proposition \ref{DModFlatConnSh}. An explanation of the underlying idea of this proposition can be found in Appendix \ref{D-modules}.

\subsection{Closed symmetric monoidal structure on ${\tt Mod}(\cD)$}

\newcommand{\cL}{{\cal L}}
\newcommand{\cN}{{\cal N}}

If we apply the Hom bi-functor (resp., the tensor product bi-functor) over $\cD$ (see (\ref{HomFun}) (resp., see (\ref{TensFun}))) to two left $\cD$-modules (resp., a right and a left $\cD$-module), we get only a (sheaf of) $\K$-vector space(s) (see remark at the end of Section \ref{ShMod}). {\it The proper concept is the Hom bi-functor (resp., the tensor product bi-functor) over $\cO$}. Indeed, if $\cP,\cQ\in{\tt Mod}(\cD_X)\subset {\tt Mod}(\cO_X)$, the Hom sheaf ${\cal H}om_{\cO_X}(\cP,\cQ)$ (resp., the tensor product sheaf $\cP\0_{\cO_X} \cQ$) is a sheaf of $\cO_X$-modules. To define on this $\cO_X$-module, an extending left $\cD_X$-module structure, it suffices, as easily checked, to define the action of $\theta\in\Theta_X$ on $\zf\in{\cH}om_{\cO_X}(\cP,\cQ)$, for any $p\in\cP$, by \be\label{HomDMod}(\nabla_\theta\zf)(p)=\nabla_\theta(\zf(p))-\zf(\nabla_\theta p)\;\ee ( resp., on $p\0 q$, $p\in\cP, q\in \cQ$, by \be\label{TensDMod}\nabla_\theta(p\0 q)=(\nabla_\theta p)\0 q+p\0(\nabla_\theta q)\;)\;.\ee

The functor $${\cal H}om_{\cO_X}(\cP,\bullet):{\tt Mod}(\cD_X)\to {\tt Mod}(\cD_X)\;,$$ $\cP\in{\tt Mod}(\cD_X)$, is the right adjoint of the functor $$\bullet\0_{\cO_X}\cP:{\tt Mod}(\cD_X)\to {\tt Mod}(\cD_X)\;:$$ for any $\cN,\cP,\cQ\in{\tt Mod}(\cD_X)$, there is an isomorphism $${\cH}om_{\cD_X}(\cN\0_{\cO_X}\cP,\cQ)\ni f \mapsto (n\mapsto(p\mapsto f(n\0 p)))\in{\cH}om_{\cD_X}(\cN,{\cal H}om_{\cO_X}(\cP,\cQ))\;.$$
Hence, {\it the category $({\tt Mod}(\cD_X),\0_{\cO_X},\cO_X,{\cH}om_{\cO_X})$ is abelian closed symmetric monoidal}. More details on $\cD$-modules can be found in \cite{KS, Scha, Schn}.

\begin{rem} In the following, the underlying space $X$ is a smooth algebraic variety over an algebraically closed field $\K$ of characteristic 0.\end{rem}

We denote by ${\tt qcMod}(\cO_X)$ (resp., ${\tt qcMod}(\cD_X)$) the abelian category of quasi-coherent $\cO_X$-modules (resp., $\cD_X$-modules that are quasi-coherent as $\cO_X$-modules \cite{HTT}). This category is a full subcategory of ${\tt Mod}(\cO_X)$ (resp., ${\tt Mod}(\cD_X)$). Since further the tensor product of two quasi-coherent $\cO_X$-modules (resp., $\cO_X$-quasi-coherent $\cD_X$-modules) is again of this type, and since $\cO_X\in{\tt qcMod}(O_X)$ (resp., $\cO_X\in {\tt qcMod}(\cD_X)$), the category $({\tt qcMod}(\cO_X),\0_{\cO_X},\cO_X)$ (resp., $({\tt qcMod}(\cD_X),\0_{\cO_X},\cO_X)$) is a symmetric monoidal subcategory of $({\tt Mod}(\cO_X),\0_{\cO_X},\cO_X)$ (resp., $({\tt Mod}(\cD_X),\0_{\cO_X},\cO_X)$). For additional information on quasi-coherent modules over a ringed space, we refer to Appendix \ref{FinCondShMod}.

\subsection{Commutative $\cD$-algebras}

A $\cD_X$-algebra is a commutative monoid in the symmetric monoidal category ${\tt Mod}(\cD_X)$. More explicitly, a {commutative} $\cD_X$-algebra is a $\cD_X$-module $\cA,$ together with $\cD_X$-linear maps $$\zm:\cA\0_{\cO_X}\cA\to \cA\quad\text{and}\quad \zi:\cO_X\to \cA\;,$$ which respect the usual associativity, unitality, and commutativity conditions. This means exactly that $\cA$ is a commutative associative unital $\cO_X$-algebra, which is endowed with a flat connection $\nabla$ -- see Proposition \ref{DModFlatConnSh} -- such that vector fields $\zy$ act as derivations $\nabla_\zy$. Indeed, when omitting the latter requirement, we forget the linearity of $\zm$ and $\zi$ with respect to the action of vector fields. Let us translate the $\Theta_X$-linearity of $\zm$. If $\zy\in\Theta_X,$ $a,a'\in\cA$, and if $a\ast a':=\zm(a\0 a')$, we get \be \nabla_\theta(a\ast a')=\nabla_\theta(\zm(a\0 a'))=\zm((\nabla_\theta a)\0 a'+a\0 (\nabla_\theta a'))=(\nabla_\theta a)\ast a'+a\ast (\nabla_\theta a')\;.\label{LeibDAlg}\ee If we set now $1_\cA:=\zi(1)$, Equation (\ref{LeibDAlg}) shows that $\nabla_\zy (1_\cA)=0$. It is easily checked that the $\Theta_X$-linearity of $\zi$ does not encode any new information. Hence,

\begin{defi} A commutative {\bf $\cD_X$-algebra} is a commutative monoid in ${\tt Mod}(\cD_X)$, i.e., a commutative associative unital $\cO_X$-algebra that is endowed with a flat connection $\nabla$ such that $\nabla_\zy$, $\zy\in\Theta_X,$ is a derivation.\end{defi}

\begin{rem}{ All $\cD_X$-algebras considered throughout this text are implicitly assumed to be commutative.}\end{rem}

\section{Differential graded $\cD$-modules and differential graded $\cD$-algebras}

\subsection{Monoidal categorical equivalence between chain complexes of $\cD_X$-modules and {their global sections}}\label{MonEquShMod}

It is well known that any equivalence $F:{\tt C} \rightleftarrows {\tt D}:G$ between abelian categories is exact. Moreover, if $F:{\tt C} \rightleftarrows {\tt D}:G$ is an equivalence between monoidal categories, and if one of the functors $F$ or $G$ is strong monoidal, then the other is strong monoidal as well \cite{Hopf}.\medskip

In addition, see (\ref{ShVsSectqcMod}), for any affine algebraic variety $X$, we have the equivalence \be\label{ShVsSectqcMod1}\zG(X,\bullet):{\tt qcMod}(\cO_X)\rightleftarrows {\tt Mod}(\cO_X(X)):\widetilde\bullet\;\ee between abelian symmetric monoidal categories, where $\widetilde\bullet$ is isomorphic to $\cO_X\0_{\cO_X(X)}\bullet\,$. Since the latter is obviously strong monoidal, both functors, $\zG(X,\bullet)$ and $\widetilde\bullet\,$, are exact and strong monoidal. Similarly,

\begin{prop}\label{MonoidEquiv1} If $X$ is a smooth affine algebraic variety, its global section functor $\zG(X,\bullet)$ yields an equivalence \be\label{ShVsSectqcModDMon}\zG(X,\bullet):({\tt qcMod}(\cD_X),\0_{\cO_X},\cO_X)\to ({\tt Mod}(\cD_X(X)),\0_{\cO_X(X)},\cO_X(X))\;\ee between abelian symmetric monoidal categories, and it is exact and strong monoidal.\end{prop}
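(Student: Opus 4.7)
The plan is to lift the classical $\cO$-level equivalence (\ref{ShVsSectqcMod1}) between quasi-coherent $\cO_X$-modules and $\cO_X(X)$-modules to $\cD$-modules, and then to deduce strong monoidality from that of the underlying $\cO$-equivalence, which has already been recalled.

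First I would verify that $\zG(X,\bullet)$ sends an object of ${\tt qcMod}(\cD_X)$ to an object of ${\tt Mod}(\cD_X(X))$. By Proposition \ref{DModFlatConnSh}, a $\cD_X$-module structure on a quasi-coherent $\cO_X$-module $\cM$ is equivalent to a $\K$-linear sheaf morphism $\nabla:\Theta_X\to{\cal E}nd_\K(\cM)$ satisfying (1)--(3). Taking global sections yields a $\K$-linear map $\Theta_X(X)\to\End_\K(\zG(X,\cM))$ that satisfies the analogous algebraic versions of (1)--(3). Since on a smooth affine $X$ the $\K$-algebra $\cD_X(X)$ is generated by $\cO_X(X)$ and $\Theta_X(X)$, this produces a genuine $\cD_X(X)$-module structure on $\zG(X,\cM)$. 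Conversely, starting from $M\in{\tt Mod}(\cD_X(X))$, the $\cO_X$-module $\widetilde{M}\cong\cO_X\0_{\cO_X(X)}M$ needs to be endowed with a $\cD_X$-module structure: one defines a connection locally by the Leibniz rule $\nabla_\zy(f\0 m)=\zy(f)\0 m+f\0\nabla_{\zy'}m$, where $\zy'$ is the element of $\Theta_X(X)$ corresponding to a local section of $\Theta_X$ via the quasi-coherence of $\Theta_X$ (smoothness of $X$ guarantees local freeness of $\Theta_X$, which keeps this formula well-defined and $\cO_X$-linear in $\zy$). Flatness is inherited from the algebraic flatness of the $\cD_X(X)$-action on $M$, since the bracket $[\nabla_\zy,\nabla_{\zy'}]$ is determined by the Lie bracket of $\Theta_X$ and can be checked on a basis of local sections.

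Next I would check that the unit and counit of the $\cO$-level equivalence $\zG(X,\bullet)\dashv\widetilde{\bullet}$ are compatible with these connections, i.e.\ that they are morphisms in ${\tt qcMod}(\cD_X)$ and ${\tt Mod}(\cD_X(X))$ respectively; this is a direct computation from the defining Leibniz formulas. That upgrades the equivalence (\ref{ShVsSectqcMod1}) to the claimed equivalence (\ref{ShVsSectqcModDMon}), and exactness is automatic since the latter is an equivalence of abelian categories.

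For the monoidal statement, the key observation is that $\widetilde{\bullet}$ is strong monoidal at the $\cO$-level, giving a natural $\cO_X$-linear isomorphism $\widetilde{M\0_{\cO_X(X)}N}\isoto\widetilde{M}\0_{\cO_X}\widetilde{N}$. Formula (\ref{TensDMod}) shows that on both sides the $\cD_X$-action is determined by the Leibniz extension of the connections on the tensor factors; hence this isomorphism is automatically $\cD_X$-linear, so $\widetilde{\bullet}$ is strong monoidal at the $\cD$-level. By the categorical principle recalled at the outset of Section~\ref{MonEquShMod}, $\zG(X,\bullet)$ is then strong monoidal as well.

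The step I expect to be most delicate is the construction of the $\cD_X$-module structure on $\widetilde{M}$ and the verification of its flatness: one must use smoothness (to ensure $\Theta_X$ is locally free and that the Leibniz formula glues), quasi-coherence (to identify local sections of $\cO_X\0_{\cO_X(X)}M$ with global-section data), and the affine hypothesis (to pass between sheaves and their global sections without losing information). Once this structural step is in place, the remainder of the argument is a faithful transcription of the $\cO$-level proof through the translation provided by Proposition~\ref{DModFlatConnSh} and formulas (\ref{HomDMod})--(\ref{TensDMod}).
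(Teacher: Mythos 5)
Your proposal is correct in substance, but it takes a longer route than the paper on one point and the same route on the other. For the underlying categorical equivalence the paper simply cites \cite[Proposition 1.4.4]{HTT}, whereas you rebuild it: transporting the flat connection through $\zG(X,\bullet)$ using Proposition \ref{DModFlatConnSh}, and conversely equipping $\widetilde{M}\cong\cO_X\0_{\cO_X(X)}M$ with a connection. That reconstruction is essentially the content of the cited result, so nothing is wrong, but be careful with your phrase ``the element of $\Theta_X(X)$ corresponding to a local section of $\Theta_X$'': a section of $\Theta_X$ over a principal open $D(f)$ corresponds to an element of the \emph{localization} $\Theta_X(X)_f$, not of $\Theta_X(X)$ itself, so the Leibniz formula must first be extended to localized actions (the quotient rule $\nabla_{\zy/f^n}$); quasi-coherence and smoothness enter exactly there, and the gluing then works because the formula is local and natural. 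For strong monoidality your argument is the mirror image of the paper's: you check that the $\cO$-level isomorphism $\widetilde{M\0_{\cO_X(X)}N}\simeq\widetilde{M}\0_{\cO_X}\widetilde{N}$ is $\cD_X$-linear because both sides carry the Leibniz-extended connection (\ref{TensDMod}), and then transfer strong monoidality to $\zG(X,\bullet)$ via the principle recalled at the start of Section \ref{MonEquShMod}; the paper instead verifies directly that the $\Theta_X(X)$-actions on $\zG(X,\cP\0_{\cO_X}\cQ)$ and on $\zG(X,\cP)\0_{\cO_X(X)}\zG(X,\cQ)$ coincide under the $\cO$-level isomorphism (\ref{MonoidGlobSect}). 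The two verifications are equivalent in content; yours buys a uniform treatment of both quasi-inverse functors at once, the paper's is marginally shorter because it never needs the explicit connection on $\widetilde{M}$.
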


\begin{proof} For the categorical equivalence, see \cite[Proposition 1.4.4]{HTT}. Exactness is now clear and it suffices to show that $\zG(X,\bullet)$ is strong monoidal. We know that $\zG(X,\bullet)$ is strong monoidal as functor between modules over functions, see (\ref{ShVsSectqcMod1}). Hence, if $\cP,\cQ\in{\tt qcMod}(\cD_X)$, then \be\label{MonoidGlobSect}\zG(X,\cP\0_{\cO_X}\cQ)\simeq \zG(X,\cP)\0_{\cO_X(X)}\zG(X,\cQ)\ee as $\cO_X(X)$-modules. Recall now that we defined the $\cD_X$-module structure on $\cP\0_{\cO_X}\cQ$ by `extending' the $\Theta_X$-action (\ref{TensDMod}) on the presheaf $\cP\oslash_{\cO_X}\cQ$, see (\ref{TensSh}). In view of (\ref{MonoidGlobSect}), the action $\nabla^X$ of $\Theta_X(X)$ on $\cP(X)\0_{\cO_X(X)}\cQ(X)$ and {on} $(\cP\0_{\cO_X}\cQ)(X)$ `coincide', and so do the $\cD_X(X)$-module structures of these modules. {Finally}, the global section functor is strong monoidal. \end{proof}

\begin{rem} In the following, we work systematically over a smooth affine algebraic variety $X$ over an algebraically closed field $\K$ of characteristic 0.\end{rem}

Since the category ${\tt qcMod}(\cD_X)$ is abelian symmetric monoidal, the category ${\tt DG_+qcMod}(\cD_X)$ of differential non-negatively graded $\cO_X$-quasi-coherent $\cD_X$-modules is abelian and symmetric monoidal as well -- for the usual tensor product of chain complexes and chain maps. The unit of this tensor product is the chain complex $\cO_X$ concentrated in degree 0. The symmetry $\zb:\cP_\bullet\0 \cQ_\bullet\to \cQ_\bullet\0 \cP_\bullet$ is given by $$\zb(p\0 q)=(-1)^{\tilde p\tilde q}q\0 p\;,$$ where `tilde' denotes the degree and where the sign is necessary to obtain a chain map. Let us also mention that the zero object of ${\tt DG_+qcMod}(\cD_X)$ is the chain complex $(\{0\},0)\,$.

\begin{prop}\label{MonoidEquiv2} If $X$ is a smooth affine algebraic variety, its global section functor induces an equivalence \be\label{ShVsSectDGqcModDMon}\zG(X,\bullet):({\tt DG_+qcMod}(\cD_X),\0_{\cO_X},\cO_X)\to ({\tt DG_+Mod}(\cD_X(X)),\0_{\cO_X(X)},\cO_X(X))\;\ee of abelian symmetric monoidal categories, and is exact and strong monoidal.\end{prop}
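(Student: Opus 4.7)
The plan is to lift Proposition \ref{MonoidEquiv1} from the underlying module categories to their chain-complex categories, degree by degree. Since $\zG(X,\bullet)$ is a functor, it sends a chain complex $(\cP_\bullet,d_\bullet)$ in ${\tt DG_+qcMod}(\cD_X)$ to the sequence $\zG(X,\cP_n)$ equipped with differentials $\zG(X,d_n)$, which is automatically a chain complex in ${\tt DG_+Mod}(\cD_X(X))$; a chain map $f_\bullet:\cP_\bullet\to\cQ_\bullet$ is sent to $\zG(X,f_\bullet)$, which still commutes with the differentials by functoriality. Thus $\zG(X,\bullet)$ is well-defined on ${\tt DG_+qcMod}(\cD_X)$.

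First, I would establish the equivalence. The inverse candidate is the degreewise application of $\widetilde{\bullet}$ from Proposition \ref{MonoidEquiv1}, with differentials obtained by applying $\widetilde{\bullet}$ to the differentials of the given complex over $\cD_X(X)$. The unit and counit of the module-level equivalence are natural isomorphisms, so applying them degreewise yields natural isomorphisms of chain complexes (they intertwine the differentials automatically, since they are natural with respect to the morphisms $d_n$). This gives an equivalence of categories.

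Next, exactness. A short exact sequence of chain complexes is exact iff it is exact in every degree. Since Proposition \ref{MonoidEquiv1} guarantees that $\zG(X,\bullet)$ is exact on ${\tt qcMod}(\cD_X)$, the induced functor on chain complexes is exact as well.

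The main point, and the step that requires care, is strong monoidality. For $\cP_\bullet,\cQ_\bullet\in{\tt DG_+qcMod}(\cD_X)$ the degree $n$ component of $\cP_\bullet\0_{\cO_X}\cQ_\bullet$ is the \emph{finite} direct sum $\bigoplus_{i+j=n}\cP_i\0_{\cO_X}\cQ_j$ (non-negative grading is crucial here). The global section functor commutes with finite direct sums and, by Proposition \ref{MonoidEquiv1}, is strong monoidal on the underlying categories; combining these yields a natural $\cO_X(X)$-linear isomorphism
\[
\zG(X,(\cP_\bullet\0_{\cO_X}\cQ_\bullet)_n)\simeq \bigoplus_{i+j=n}\zG(X,\cP_i)\0_{\cO_X(X)}\zG(X,\cQ_j)=(\zG(X,\cP_\bullet)\0_{\cO_X(X)}\zG(X,\cQ_\bullet))_n.
\]
One checks that these isomorphisms assemble into a chain map: the differential on $\cP_\bullet\0_{\cO_X}\cQ_\bullet$ is $d_\cP\0\id+(-1)^{\tilde p}\id\0\, d_\cQ$, and $\zG(X,\bullet)$ respects $\0$, the identity, and scalar signs, so the differentials correspond. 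Compatibility with the symmetry constraint $\zb$ is immediate since the Koszul sign depends only on the degrees, which are preserved, and $\zG(X,\bullet)$ already respects the symmetry at the module level. Unitality follows from $\zG(X,\cO_X)=\cO_X(X)$, both concentrated in degree $0$. The main subtlety is only bookkeeping of Koszul signs and of the differential on $\0$; once this is settled, strong monoidality is established, and combined with the equivalence and exactness above completes the proof.
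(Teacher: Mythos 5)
Your proposal is correct and follows essentially the same route as the paper: lift the quasi-inverse pair and its unit/counit degreewise (naturality giving both the chain-map property and the naturality of the lifted isomorphisms), then deduce strong monoidality of the induced functor from the module-level strong monoidality of $\zG(X,\bullet)$ together with its compatibility with the direct sums appearing in each degree of the tensor product of complexes. The paper phrases that last point as ``$F$ commutes with colimits, being a left adjoint,'' where you invoke commutation with finite direct sums and spell out the Koszul-sign bookkeeping, but the argument is the same.
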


\begin{proof} Let $F=\zG(X,\bullet)$ and $G$ be quasi-inverse (additive) functors that implement the equivalence (\ref{ShVsSectqcModDMon}). They induce functors $\mathbf{F}$ and $\mathbf{G}$ between the corresponding categories of chain complexes. Moreover, the natural isomorphism $a:\id\Rightarrow G\circ F$ induces, for each chain complex $\cP_\bullet\in{\tt DG_+qcMod}(\cD_X)$, a chain isomorphism $\mathbf{a}_{\cP_\bullet}:\cP_\bullet\rightarrow (\mathbf{G\circ F})(\cP_\bullet)$, which is functorial in $\cP_\bullet\,$. Both, the chain morphism property of $\mathbf{a}_{\cP_\bullet}$ and the naturality of $\mathbf{a}$, are direct consequences of the naturality of $a$ {-- since the action of $\mathbf{a}$ on a chain complex is given by the degreewise action of $a$}. Similarly, the natural isomorphism $b:F\circ G\Rightarrow \id$ induces a natural isomorphism $\mathbf{b}:\mathbf{F\circ G}\Rightarrow\id$, so that ${\tt DG_+qcMod}(\cD_X)$ and ${\tt DG_+Mod}(\cD_X(X))$ are actually equivalent categories. Since $F:{\tt qcMod}(\cD_X)\to{\tt Mod}(\cD_X(X))$ is strong monoidal and commutes with colimits (as left adjoint of $G$), it is straightforwardly checked that $\mathbf{F}$ is strong monoidal.\end{proof}

\subsection{Differential graded $\cD_X$-algebras vs. differential graded $\cD_X(X)$-algebras}

The strong monoidal functors $\mathbf{F}:{\tt DG_+qcMod}(\cD_X)\rightleftarrows{\tt DG_+Mod}(\cD_X(X)):\mathbf{G}$ yield an equivalence between the corresponding categories of commutative monoids:

\begin{cor} For any smooth affine variety $X$, there is an equivalence of categories \be\label{ShVsSectqcDAlg} \zG(X,\bullet): {\tt DG_+qcCAlg}(\cD_X)\rightarrow{\tt DG_+CAlg}(\cD_X(X))\;\ee between the category of differential graded quasi-coherent commutative $\cD_X$-algebras and the category of differential graded commutative $\cD_X(X)$-algebras.\end{cor}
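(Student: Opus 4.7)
The plan is to invoke Proposition \ref{MonoidEquiv2} and then use the general categorical fact that a strong monoidal equivalence between symmetric monoidal categories restricts to an equivalence between the associated categories of commutative monoids. No new analysis of $\cD$-modules is required; everything is formal once the underlying monoidal equivalence is in hand.

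First I would unpack the definitions: an object of $\tt DG_+qcCAlg(\cD_X)$ is, by the definition of a differential graded commutative $\cD_X$-algebra, a triple $(\cA,\mu,\iota)$ where $\cA\in{\tt DG_+qcMod}(\cD_X)$, $\mu:\cA\0_{\cO_X}\cA\to\cA$ and $\iota:\cO_X\to\cA$ are morphisms in ${\tt DG_+qcMod}(\cD_X)$ satisfying associativity, unitality, and (graded) commutativity with respect to the symmetry $\zb$ of Section \ref{MonEquShMod}; morphisms are chain maps of $\cD_X$-modules respecting $\mu$ and $\iota$. The analogous description holds for $\tt DG_+CAlg(\cD_X(X))$ over $\cD_X(X)$.

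Next, using the strong monoidal structure from Proposition \ref{MonoidEquiv2}, let $\zf_{\cP,\cQ}:\mathbf{F}(\cP)\0_{\cO_X(X)}\mathbf{F}(\cQ)\isoto\mathbf{F}(\cP\0_{\cO_X}\cQ)$ and $\zf_0:\cO_X(X)\isoto\mathbf{F}(\cO_X)$ denote the coherence isomorphisms of $\mathbf{F}=\zG(X,\bullet)$. Given $(\cA,\mu,\iota)\in\tt DG_+qcCAlg(\cD_X)$, I would define
\[
\tilde\mu := \mathbf{F}(\mu)\circ\zf_{\cA,\cA}\quad\text{and}\quad \tilde\iota:=\mathbf{F}(\iota)\circ\zf_0\;,
\]
and verify that $(\mathbf{F}(\cA),\tilde\mu,\tilde\iota)$ is a commutative monoid in $\tt DG_+Mod(\cD_X(X))$. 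The associativity, unitality, and commutativity diagrams for $\tilde\mu,\tilde\iota$ follow from the corresponding diagrams for $\mu,\iota$ together with the coherence axioms for $\zf_{\bullet,\bullet}$ and $\zf_0$; this is the standard lifting of monoids along a strong monoidal functor and requires no $\cD$-specific input. The same construction with the quasi-inverse $\mathbf{G}$ (which is strong monoidal automatically, as recalled at the beginning of Section \ref{MonEquShMod}) yields a functor in the opposite direction.

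Finally, I would promote the natural isomorphisms $\mathbf{a}:\id\Rightarrow\mathbf{G}\circ\mathbf{F}$ and $\mathbf{b}:\mathbf{F}\circ\mathbf{G}\Rightarrow\id$ from Proposition \ref{MonoidEquiv2} to natural isomorphisms of algebras: naturality of $\mathbf{a}_\cA$ in $\cA$ applied to the morphisms $\mu$ and $\iota$ shows that $\mathbf{a}_\cA$ intertwines the monoid structure on $\cA$ with the transported monoid structure on $(\mathbf{G}\circ\mathbf{F})(\cA)$, so $\mathbf{a}_\cA$ is an isomorphism of $\cD_X$-algebras; similarly for $\mathbf{b}$. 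This yields the desired equivalence. The main (and only) delicate point is book-keeping of the coherence isomorphisms $\zf_{\bullet,\bullet},\zf_0$ when checking that the lifted unit and associativity squares commute, which is purely diagrammatic and uses nothing beyond what is already established in Proposition \ref{MonoidEquiv2}.
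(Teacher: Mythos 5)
Your proposal is correct and follows exactly the route the paper takes: the corollary is stated there as an immediate consequence of the strong monoidal equivalence of Proposition \ref{MonoidEquiv2}, via the general fact that such an equivalence induces an equivalence between the categories of commutative monoids. You merely spell out the standard transport-of-structure details (coherence isomorphisms, lifting of the unit and counit of the equivalence) that the paper leaves implicit.
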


The main goal of the present paper is to construct a model category structure on the {\small LHS} category. In view of the preceding corollary, it suffices to build this model structure on the {\small RHS} category. {We thus deal in the following exclusively with the category of {\bf differential graded $\cD$-algebras (resp., $\cD$-algebras)}, where $\cD:=\cD_X(X)$, which we denote simply by $\tt DG\cD A$ (resp., $\tt \cD A$). Similarly, the objects of ${\tt DG_+Mod}(\cD_X(X))$ (resp., ${\tt Mod}(\cD_X(X))$) are termed {\bf differential graded $\cD$-modules (resp., $\cD$-modules)} and their category is denoted by $\tt DG\cD M$ (resp., $\tt \cD M$).}

\subsection{The category $\tt DG\cD A$}

In this subsection we describe the category $\tt DG\cD A$ and prove first properties.\medskip

Whereas ${\op{Hom}_{\cD\tt M}(P,Q),}$ $P,Q\in{\tt \cD M}$, is a $\K$-vector space, the set ${\op{Hom}_{\cD{\tt A}}(A,B),}$ $A,B\in{\tt \cD A}$, is {not even} an abelian group. Hence, we cannot consider the category of chain complexes over commutative $\cD$-algebras and the objects of $\tt DG\cD A$ are (probably useless to say) no chain complexes of algebras.\medskip

As explained above, a $\cD$-algebra is a commutative unital $\cO$-algebra, endowed with a $\cD$-module structure {(which extends the $\cO$-module structure)}, such that vector fields act by derivations. Analogously, a differential graded $\cD$-algebra is easily seen to be a differential graded commutative unital $\cO$-algebra (a graded $\cO$-module together with an $\cO$-bilinear degree respecting multiplication, which is associative, unital, and graded-commutative; this module comes with a square 0, degree $-1$, $\cO$-linear, graded derivation), which is also a differential graded $\cD$-module (for the same differential, grading, and $\cO$-action), such that vector fields act as {degree zero} derivations.

\begin{prop} A differential graded $\cD$-algebra is a differential graded commutative unital $\cO$-algebra, as well as a differential graded $\cD$-module, such that vector fields act as derivations. Further, the morphisms of $\tt DG\cD A$ are the morphisms of $\tt DG\cD M$ that respect the multiplications and units. \end{prop}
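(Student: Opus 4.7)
The plan is to unpack the definition of $\tt DG\cD A$ as the category of commutative monoids in the symmetric monoidal category $(\tt DG\cD M,\0_\cO,\cO)$ and to show, by a degreewise application of the ungraded case, that this categorical datum is equivalent to the explicit structure asserted. So an object of $\tt DG\cD A$ is an object $\cA\in\tt DG\cD M$ together with $\tt DG\cD M$-morphisms $\zm:\cA\0_\cO\cA\to\cA$ and $\zi:\cO\to\cA$ satisfying associativity, unitality, and commutativity relative to the symmetry $\zb(a\0 a')=(-1)^{\tilde a\tilde a'}a'\0 a$.

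First I would forget the $\cD$-action and keep only the underlying $\cO$-structure. Because $\zm$ is an $\cO$-linear degree-zero chain map and because $\zb$ carries the Koszul sign, the axioms for $(\cA,\zm,\zi)$ translate exactly into: $\cA$ is a graded $\cO$-module equipped with an $\cO$-bilinear, degree-preserving, associative, unital, graded-commutative multiplication $\ast:=\zm(-\0-)$, whose differential $d$ (coming from the $\tt DG\cD M$-structure) is a degree $-1$ square-zero $\cO$-linear graded derivation. In other words, $\cA$ is a differential graded commutative unital $\cO$-algebra.

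Next I would use the additional information carried by the fact that $\zm$ and $\zi$ are $\cD$-linear, not merely $\cO$-linear. Applying the formula (\ref{TensDMod}) degreewise to $\cA\0_\cO\cA$, the $\Theta$-equivariance of $\zm$ yields, for every $\zy\in\Theta$ and $a,a'\in\cA$, the identity $\nabla_\zy(a\ast a')=(\nabla_\zy a)\ast a'+a\ast(\nabla_\zy a')$, exactly as in (\ref{LeibDAlg}); this is precisely the statement that each vector field acts as a derivation of $\ast$. The remaining $\Theta$-equivariance of $\zi$ reduces, as in the ungraded case already discussed before Equation (\ref{LeibDAlg}), to $\nabla_\zy(1_\cA)=0$, which is already a consequence of the Leibniz rule applied to $1_\cA\ast 1_\cA=1_\cA$, and therefore contributes no new condition. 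Conversely, given a differential graded commutative unital $\cO$-algebra that is also a $\tt DG\cD M$ such that vector fields act as derivations, one recovers $\cD$-linear structure morphisms $\zm$ and $\zi$, so the two descriptions are equivalent.

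For the morphism statement, I would invoke the general fact that in any symmetric monoidal category, a morphism of commutative monoids is a morphism of the underlying objects that intertwines the multiplications and sends unit to unit; specialising to $(\tt DG\cD M,\0_\cO,\cO)$ gives the stated description. The main obstacle is purely bookkeeping: one must check that the Koszul signs coming from $\zb$ combine with the graded Leibniz rule for $d$ and with the formula (\ref{TensDMod}) in a coherent way, but this is automatic because $\zb$, $d$, and $\nabla$ are all already chain/module morphisms in $\tt DG\cD M$ by Proposition \ref{MonoidEquiv2}, so no new sign conventions need to be introduced beyond those of the monoidal structure.
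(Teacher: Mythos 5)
Your argument is correct and follows essentially the same route as the paper, which presents this proposition as the direct graded analogue of the ungraded discussion around Equation (\ref{LeibDAlg}): unpack the commutative-monoid data in $({\tt DG\cD M},\0_\cO,\cO)$, observe that $\cO$-linearity of $\zm$ and $\zi$ gives the differential graded commutative unital $\cO$-algebra structure, and that the extra $\Theta$-equivariance of $\zm$ via (\ref{TensDMod}) is exactly the derivation property, with the condition on $\zi$ adding nothing new. The only cosmetic blemish is the closing citation of Proposition \ref{MonoidEquiv2}, which concerns the global-sections equivalence and is not what guarantees the sign bookkeeping, but this does not affect the proof.
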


In fact:

\begin{prop} The category $\tt DG\cD A$ is symmetric monoidal for the tensor product of $\tt DG\cD M$ with values on objects that are promoted canonically from $\tt DG\cD M$ to $\tt DG\cD A$ and same values on morphisms. The tensor unit is $\cO$; the initial object $(\,$resp., terminal object$\,)$ is $\cO$ $(\,$resp., $\{0\}$$\,)$.\end{prop}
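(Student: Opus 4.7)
The plan is to lift the symmetric monoidal structure of $\tt DG\cD M$ along the obvious forgetful functor, by endowing the $\tt DG\cD M$-tensor product of two dg $\cD$-algebras with a canonical dg $\cD$-algebra structure, checking that the coherence data of $\tt DG\cD M$ already consist of $\cD$-algebra morphisms, and finally identifying the initial and terminal objects. Given $A,B\in\tt DG\cD A$, I would first equip $A\otimes_\cO B$ with the Koszul-signed multiplication $(a\otimes b)(a'\otimes b'):=(-1)^{|b||a'|}\,aa'\otimes bb'$ and unit $1_A\otimes 1_B$. Associativity, unitality, and graded commutativity are immediate consequences of the analogous properties of the graded $\cO$-algebras $A$ and $B$, and the differential on $A\otimes_\cO B$ inherited from $\tt DG\cD M$ is a graded derivation of this product because it is one on each tensor factor.

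The only genuinely $\cD$-theoretic point is that the $\cD$-module structure on $A\otimes_\cO B$ prescribed by equation (\ref{TensDMod}) acts through derivations of the new multiplication. Using that $\nabla_\theta$ is of degree $0$ (so no extra signs arise) together with the derivation property of $\nabla_\theta$ on each factor, a one-line Koszul bookkeeping gives
$$\nabla_\theta\bigl((a\otimes b)(a'\otimes b')\bigr)=\nabla_\theta(a\otimes b)\cdot(a'\otimes b')+(a\otimes b)\cdot\nabla_\theta(a'\otimes b')\,,$$
showing that $A\otimes_\cO B\in\tt DG\cD A$. For morphisms, given $f:A\to A'$ and $g:B\to B'$ in $\tt DG\cD A$, the map $f\otimes g$ already lives in $\tt DG\cD M$ and manifestly respects products and units. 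The associator, the left and right unitors with unit $\cO$, and the symmetry $\beta(a\otimes b)=(-1)^{|a||b|}\,b\otimes a$ inherited from $\tt DG\cD M$ are strict algebra morphisms by direct inspection, so the pentagon and hexagon axioms transport unchanged from $\tt DG\cD M$, which proves the first part of the proposition.

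It remains to identify the extremal objects. The trivial algebra $\{0\}$, in which $1=0$, is terminal: any algebra morphism $\phi:A\to\{0\}$ satisfies $\phi(a)=\phi(a\cdot 1_A)=\phi(a)\cdot 0=0$, so the only candidate $\phi\equiv 0$ is forced and is clearly a morphism in $\tt DG\cD A$. The initial object is $\cO$: its unit map $\iota:\cO\to A$ is a morphism of dg $\cD$-algebras by the very definition of $A\in\tt DG\cD A$, and any competing $\cD$-algebra map $f:\cO\to A$ satisfies $f(1)=1_A$ and is $\cO$-linear (because it is $\cD$-linear and $\cO\hookrightarrow\cD$), hence $f(r)=r\cdot 1_A=\iota(r)$. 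The only real obstacle in the whole argument is the Leibniz identity for $\nabla_\theta$ on the tensor product above; once that single verification is recorded, everything else is a routine transport from the already-established monoidal structure of $\tt DG\cD M$.
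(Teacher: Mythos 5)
Your proposal is correct and follows essentially the same route as the paper: the same Koszul-signed multiplication on $A\otimes_\cO B$, the same transport of coherence data from $\tt DG\cD M$, and the same identification of $\cO$ and $\{0\}$ as initial and terminal objects. The only cosmetic difference is that you verify initiality of $\cO$ by a direct computation with the unit map, whereas the paper deduces it from the adjunction $\cS\dashv\op{For}$ (left adjoints preserve initial objects); you also spell out the Leibniz identity for $\nabla_\theta$ that the paper leaves as ``straightforwardly checked''.
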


\begin{proof} Let $A_\bullet,B_\bullet\in\tt DG\cD A$. Consider homogeneous vectors $a\in A_{\tilde{a}}$, $a'\in A_{\tilde{a}'}$, $b\in B_{\tilde{b}}$, $b'\in B_{\tilde{b}'}$, such that $\tilde a+\tilde b=m$ and $\tilde a'+\tilde b'=n$. Endow now the tensor product $A_\bullet\0_\cO B_\bullet\in\tt DG\cD M$ with the multiplication $\star$ defined by \be\label{MultTensMod}(A_\bullet\0_\cO B_\bullet)_m\times (A_\bullet\0_\cO B_\bullet)_n\ni(a\0 b,a'\0 b')\mapsto $$ $$(a\0 b)\star(a'\0 b') = (-1)^{\tilde a'\tilde b}(a\star_A a')\0 (b\star_B b')\in (A_\bullet\0_\cO B_\bullet)_{m+n}\;,\ee where the multiplications of $A_\bullet$ and $B_\bullet$ are denoted by $\star_A$ and $\star_B$, respectively. The multiplication $\star$ equips $A_\bullet\0_\cO B_\bullet$ with a structure of differential graded $\cD$-algebra. Note also that the multiplication of $A_\bullet\in\tt DG\cD A$ is a $\tt DG\cD A$-morphism $\zm_A:A_\bullet\0_\cO A_\bullet\to A_\bullet\,$.\medskip

Further, the unit of the tensor product in $\tt DG\cD A$ is the unit $(\cO,0)$ of the tensor product in $\tt DG\cD M$.\medskip

Finally, let $A_\bullet,B_\bullet,C_\bullet,D_\bullet\in\tt DG\cD A$ and let $\zf:A_\bullet\to C_\bullet$ and $\psi:B_\bullet\to D_\bullet$ be two $\tt DG\cD A$-morphisms. Then the $\tt DG\cD M$-morphism $\zf\0\psi:A_\bullet\0_\cO B_\bullet\to C_\bullet\0_\cO D_\bullet$ is also a $\tt DG\cD A$-morphism.\medskip

All these claims (as well as all the additional requirements for a symmetric monoidal structure) are straightforwardly checked.\medskip

The initial and terminal objects in $\tt DG\cD A$ are the differential graded $\cD$-algebras $(\cO,0)$ and $(\{0\},0)$, respectively. Indeed, in view of the adjunction (\ref{Adj}), the initial object of $\tt DG\cD A$ is the image by $\cS$ of the initial object of $\tt DG\cD M$.\end{proof}

Let us still mention the following

\begin{prop}\label{ProdTensProd} If $\zf:A_\bullet\to C_\bullet$ and $\psi:B_\bullet\to C_\bullet$ are $\tt DG\cD A$-morphisms, then $\chi:A_\bullet\0_\cO B_\bullet\to C_\bullet$, which is well-defined by $\chi(a\0 b)=\zf(a)\star_C \psi(b),$ is a $\tt DG\cD A$-morphism that restricts to $\zf$ (resp., $\psi$) on $A_\bullet$ (resp., $B_\bullet$).\end{prop}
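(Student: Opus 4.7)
The plan is to exhibit $\chi$ as the composite $\mu_C\circ(\zf\0\psi)$, where $\mu_C:C_\bullet\0_\cO C_\bullet\to C_\bullet$ is the multiplication of $C_\bullet$, and then verify the required properties piece by piece. The formula $\chi(a\0 b)=\zf(a)\star_C\psi(b)$ is exactly the composite applied to a decomposable tensor, and the universal property of the tensor product of $\cO$-modules (together with bilinearity of $\star_C$) makes this well-defined on all of $A_\bullet\0_\cO B_\bullet$.

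First I would check that $\chi$ is a $\tt DG\cD M$-morphism. By the previous proposition, $\zf\0\psi$ is a $\tt DG\cD M$-morphism; since $\mu_C$ is a $\tt DG\cD A$-morphism, it is in particular a $\tt DG\cD M$-morphism, so the composite $\chi$ respects the differential, the grading and the $\cD$-action. Next I would verify that $\chi$ preserves the unit: $\chi(1_A\0 1_B)=\zf(1_A)\star_C\psi(1_B)=1_C\star_C 1_C=1_C$.

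The central computation is compatibility with multiplication. For homogeneous $a,a'\in A_\bullet$ and $b,b'\in B_\bullet$, using (\ref{MultTensMod}) and the fact that $\zf$, $\psi$ are algebra morphisms,
\begin{equation*}
\chi\bigl((a\0 b)\star(a'\0 b')\bigr)=(-1)^{\tilde a'\tilde b}\zf(a\star_A a')\star_C\psi(b\star_B b')=(-1)^{\tilde a'\tilde b}\zf(a)\star_C\zf(a')\star_C\psi(b)\star_C\psi(b'),
\end{equation*}
while
\begin{equation*}
\chi(a\0 b)\star_C\chi(a'\0 b')=\zf(a)\star_C\psi(b)\star_C\zf(a')\star_C\psi(b').
\end{equation*}
The two agree, because in the graded-commutative algebra $C_\bullet$ we have $\psi(b)\star_C\zf(a')=(-1)^{\tilde a'\tilde b}\zf(a')\star_C\psi(b)$ (noting $\widetilde{\psi(b)}=\tilde b$ and $\widetilde{\zf(a')}=\tilde a'$). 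This Koszul-sign bookkeeping is the one delicate point, but it matches exactly the sign in (\ref{MultTensMod}), which is precisely why that sign was built into the tensor product of differential graded $\cD$-algebras.

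Finally, the restriction property follows from unitality. Identifying $A_\bullet$ with $A_\bullet\0_\cO\cO\subset A_\bullet\0_\cO B_\bullet$ via $a\mapsto a\0 1_B$, one has $\chi(a\0 1_B)=\zf(a)\star_C\psi(1_B)=\zf(a)\star_C 1_C=\zf(a)$, and symmetrically $\chi(1_A\0 b)=\psi(b)$. I expect no real obstacle beyond correctly tracking the sign in the multiplication check; everything else is formal from the symmetric monoidal structure on $\tt DG\cD M$ and the fact that $C_\bullet$ is a commutative monoid in it.
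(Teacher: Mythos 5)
Your proposal is correct and follows exactly the paper's route: the paper's entire proof is the observation that $\chi=\zm_C\circ(\zf\0\psi)$, which is your opening step, and the verifications you supply (well-definedness, unitality, the Koszul-sign check against the sign in the tensor-product multiplication, and the restriction via $a\mapsto a\0 1_B$) are the details the paper leaves implicit. The sign bookkeeping in the multiplication check is accurate.
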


\begin{proof} It suffices to observe that $\chi=\zm_C\circ(\zf\0 \psi)\,$.\end{proof}

\section{Finitely generated model structure on $\tt DG{\cD}M$}

When dealing with model categories, we use the definitions of \cite{Hov}. A short comparison of various definitions used in the literature can be found in Appendix \ref{ModCat} {below}. For additional information, we refer the reader to \cite{GS}, \cite{Hir}, \cite{Hov}, and \cite{Quill}.\medskip

Let us recall that $\tt DG\cD M$ is the category ${\tt Ch}_+(\cD)$ of non-negatively graded chain complexes of left modules over the non-commutative unital ring $\cD=\cD_X(X)$ of differential operators of a smooth affine algebraic variety $X$. The remaining part of this section actually holds for any not necessarily commutative unital ring $R$ and the corresponding category ${\tt Ch}_+(R)$. We will show that ${\tt Ch}_+(R)$ is a finitely (and thus cofibrantly) generated model category.\medskip

In fact, most of the familiar model categories are cofibrantly generated. For instance, in the model category $\tt SSet$ of simplicial sets, the generating cofibrations $I$ (resp., the generating trivial cofibrations $J$) are the canonical simplicial maps $\p\zD[n]\to \zD[n]$, whose sources are the boundaries of the standard simplicial $n$-simplices (resp., the canonical maps $\zL^r[n]\to \zD[n]$, whose sources are the $r$-horns of the standard $n$-simplices, $0\le r\le n$). The generating cofibrations and trivial cofibrations of the model category $\tt Top$ of topological spaces -- which is Quillen equivalent to $\tt SSet$ -- are defined similarly. The homological situation is analogous to the topological and combinatorial ones. In the case of ${\tt Ch}_+(R)$, the set $I$ of generating cofibrations (resp., the set $J$ of generating trivial cofibrations) is made (roughly) of the maps $S^{n-1}\to D^n$ from the $(n-1)$-sphere to the $n$-disc (resp., of the maps $0\to D^n$). In fact, the $n$-disc $D^n$ is the chain complex \be\label{Disc}D^n_{\bullet}: \cdots \to 0\to 0\to \stackrel{(n)}{R} \to \stackrel{(n-1)}{R}\to 0\to \cdots\to \stackrel{(0)}{0}\;,\ee whereas the $n$-sphere $S^n$ is the chain complex \be\label{Sphere}S^n_\bullet: \cdots \to 0\to 0\to \stackrel{(n)}{R}\to 0\to \cdots\to \stackrel{(0)}{0}\;.\ee Definition (\ref{Disc}), in which the differential is necessarily the identity of $R$, is valid for $n\ge 1$. Definition (\ref{Sphere}) makes sense for $n\ge 0$. We extend the first (resp., second) definition to $n=0$ (resp., $n=-1$) by setting $D^0_\bullet:=S^0_\bullet$ (resp., $S^{-1}_\bullet:=0_\bullet$). The chain maps $S^{n-1}\to D^n$ are canonical (in degree $n-1$, they necessarily coincide with $\id_R$), and so are the maps $0\to D^n$. We now define the set $I$ (resp., $J$) by \be\label{GenCof} I=\{\iota_n: S^{n-1} \to D^n, n\ge 0\}\ee $(\,$resp., \be\label{GenTrivCof}J=\{\zeta_n: 0 \to D^n, n\ge 1\}\;)\;.\ee

\begin{theo}\label{FinGenModDGDM}
For any unital ring $R$, the category ${\tt Ch}_+(R)$ of non-negatively graded chain complexes of left $R$-modules is a finitely $(\,$and thus a cofibrantly$\,)$ generated model category $(\,$in the sense of \cite{GS} and in the sense of \cite{Hov}$\,)$, with $I$ as its generating set of cofibrations and $J$ as its generating set of trivial cofibrations. The weak equivalences are the maps that induce isomorphisms in homology, the cofibrations are the injective maps with degree-wise projective cokernel $(\,$projective object in ${\tt Mod}(R)$$\,)$, and the fibrations are the maps that are surjective in $(\,$strictly$\,)$ positive degrees. Further, the trivial cofibrations are the injective maps $i$ whose cokernel $\coker(i)$ is strongly projective as a chain complex $(\,$strongly projective object $\coker(i)$ in ${\tt Ch}_+(R)$, in the sense that, for any map $c:\coker(i)\to C$ and any map $p:D\to C$, there is a map $\ell:\coker(i)\to D$ such that $p\circ\ell=i$, if $p$ is surjective in $(\,$strictly$\,)$ positive degrees$\,)$.\end{theo}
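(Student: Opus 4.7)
The strategy is to invoke the recognition theorem for cofibrantly generated model categories (e.g.\ \cite[Theorem 2.1.19]{Hov}): given the class $W$ of quasi-isomorphisms together with the sets $I$ and $J$ of (\ref{GenCof})--(\ref{GenTrivCof}), it suffices to check that $W$ has the 2-out-of-3 property and is closed under retracts, that the domains of $I$ (resp.\ $J$) are small relative to $I$-cell (resp.\ $J$-cell), that $J$-cell $\subseteq W\cap I$-cof, and that $I$-inj $= W\cap J$-inj. The last equality is conveniently established by identifying $J$-inj and $I$-inj explicitly and then checking $W\cap J$-inj $\subseteq I$-inj.

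First I would verify smallness. Each domain and codomain of $I$ and $J$ is a chain complex concentrated in at most two consecutive degrees with $R$ (or $0$) in each nonzero slot; since $R$ is a compact generator of ${\tt Mod}(R)$, $\Hom_{{\tt Ch}_+(R)}(S^{n-1},-)$ and $\Hom_{{\tt Ch}_+(R)}(0,-)$ commute with all sequential colimits, so these domains are finite in the sense of Hovey, giving a \emph{finitely} generated model structure. Next I would compute the right lifting classes. The RLP against $\zeta_n:0\to D^n$ for $n\ge 1$ is equivalent, by evaluation on $\id_R$ in degree $n$, to $p_n$ being surjective; so $J$-inj is exactly the class of maps surjective in strictly positive degrees, which will be the fibrations. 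The RLP against $\iota_n:S^{n-1}\to D^n$ for $n\ge 0$ translates, after a short diagram chase, into: $p_0$ is surjective, $p$ is surjective in positive degrees, and every cycle of $Y$ which is the image of something in $X$ can be lifted to a cycle in $X$; a second easy diagram chase shows that this class coincides with surjective quasi-isomorphisms. This yields $I$-inj $= W\cap J$-inj.

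Then I would check $J$-cell $\subseteq W\cap I$-cof. A pushout of $\zeta_n:0\to D^n$ along $0\to X$ is the canonical injection $X\hookrightarrow X\oplus D^n$, which is an injection with contractible free (hence projective) cokernel and a quasi-isomorphism. Transfinite compositions of such maps remain injective, remain quasi-isomorphisms (since homology of chain complexes commutes with filtered colimits), and their cokernels remain coproducts of disks; in particular they lie in $I$-cof by standard stability properties of the left lifting class. That $W$ satisfies 2-out-of-3 and is closed under retracts is classical. The recognition theorem now applies and produces the desired model structure.

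It remains to identify cofibrations and trivial cofibrations. A cofibration $f$ is a retract of a transfinite composition of pushouts of the $\iota_n$; each such pushout is a degreewise split injection whose cokernel acquires, in each degree, a free summand, so $f$ is an injection with degreewise projective cokernel. Conversely, given an injection $X\hookrightarrow Y$ with $Y/X$ degreewise projective, one would build a presentation of $Y/X$ by a transfinite filtration realizing $Y$ as $X$ with $D^n$-cells attached along spheres $S^{n-1}$, possibly after passing to a free cover and retracting; this is the step I expect to be the main technical obstacle, since one must produce compatible generators/relations for each $(Y/X)_n$ and then trade projectivity for retractions to land in $I$-cof. For trivial cofibrations one applies the same argument to $J$: a trivial cofibration is a retract of a $J$-cell, hence an injection whose cokernel is a retract of a coproduct of disks; conversely, any such cokernel has the lifting property with respect to fibrations (being built from projective $D^n$ summands, each of which is contractible), which is precisely the stated ``strongly projective'' condition, and the inclusion of its predecessor is then a trivial cofibration by a standard argument using the splitting of the short exact sequence.
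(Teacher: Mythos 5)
Your route is genuinely different from the paper's. The paper does almost no work on the model structure itself: it cites Theorem 7.2 of \cite{DS} for the existence of the model structure with the stated weak equivalences, fibrations and cofibrations, and Proposition 7.19 of \cite{DS} for the identities $\op{Fib}=\op{RLP}(J)$ and $\op{TrivFib}=\op{RLP}(I)$; the only new content is the smallness check (bounded complexes of finitely presented modules are $n$-small relative to all chain maps, by Lemma 2.3.2 of \cite{Hov}), from which functorial factorizations follow by the small object argument, plus a pointer to Lemma 2.2.11 of \cite{Hov} for the characterization of trivial cofibrations. You instead rebuild the structure from scratch via the recognition theorem. That is a legitimate and self-contained alternative, and your computations of $J$-inj (surjections in positive degrees) and $I$-inj (surjective quasi-isomorphisms, which coincide with quasi-isomorphisms surjective in positive degrees by a small chase in degree $0$), as well as the inclusion $J\text{-cell}\subseteq W\cap I\text{-cof}$, are correct and standard. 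But note that the recognition theorem only delivers $\op{Cof}=\op{LLP}(\op{RLP}(I))$; the description of cofibrations as injections with degreewise projective cokernel, which the theorem asserts and the paper inherits from \cite{DS}, is a separate claim you still owe.

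That is where your proposal has its one real gap, and you flag it yourself: the converse direction, that an injection $i:X\to Y$ with degreewise projective cokernel lies in $I$-cof. The strategy you sketch -- realizing $i$ as a retract of a relative $I$-cell complex by building a cell presentation of $Y/X$ -- is harder than necessary and is not the standard argument. Since $\op{Cof}=\op{LLP}(I\text{-inj})$ and you have already identified $I$-inj with the surjective quasi-isomorphisms, it suffices to produce a lift of $i$ against any surjective quasi-isomorphism directly: the short exact sequence $0\to X\to Y\to C\to 0$ splits degreewise because each $C_n$ is projective, so $Y\cong X\oplus C$ as graded modules with a twisted differential, and one constructs the lift on $C$ by induction on degree using projectivity of $C_n$ and acyclicity of the kernel of the surjective quasi-isomorphism (this is exactly the content of Proposition 7.19 in \cite{DS}). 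With that substitution your argument closes; similarly, for trivial cofibrations the clean statement is $\op{TrivCof}=\op{LLP}(\op{Fib})$, and the equivalence with ``injective with strongly projective cokernel'' is the argument of Lemma 2.2.11 in \cite{Hov}, which again works by a degreewise splitting and a direct lifting construction rather than by exhibiting a cell structure.
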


\begin{proof}

{The following proof uses the definitions of (cofibrantly generated) model categories used in \cite{DS} and \cite{GS}, as well as the non-equivalent definitions of these concepts given in \cite{Hov}: we refer again to the Appendix \ref{ModCat} below}.\medskip

It is known that ${\tt Ch}_+(R)$, with the described weak equivalences, cofibrations, and fibrations is a model category (Theorem 7.2 in \cite{DS}). A model category in the sense of \cite{DS} contains all finite limits and colimits; the $\op{Cof}-\op{TrivFib}$ and $\op{TrivCof}-\op{Fib}$ factorizations are neither assumed to be functorial, nor, of course, to be chosen functorial factorizations. Moreover, we have $\op{Fib}=\op{RLP}(J)$ and $\op{TrivFib}=\op{RLP}(I)$ (Proposition 7.19 in \cite{DS}).\medskip

Note first that ${\tt Ch}_+(R)$ has all small limits and colimits, which are taken degree-wise.\medskip

Observe also that the domains and codomains $S^n$ ($n\ge 0$) and $D^n$ ($n\ge 1$) of the maps in $I$ and $J$ are bounded chain complexes of finitely presented $R$-modules (the involved modules are all equal to $R$). However, every bounded chain complex of finitely presented $R$-modules is $n$-small, $n\in\N$, relative to all chain maps (Lemma 2.3.2 in \cite{Hov}). Hence, the domains and codomains of $I$ and $J$ satisfy the smallness condition of a finitely generated model category, and are therefore small in the sense of the finite and transfinite definitions of a cofibrantly generated model category.\medskip

It thus follows from the Small Object Argument that there exist in ${\tt Ch}_+(R)$ a functorial $\op{Cof}-\op{TrivFib}$ and a functorial $\op{TrivCof}-\op{Fib}$ factorization. Hence, the first part of Theorem \ref{FinGenModDGDM}.\medskip

As for the part on trivial cofibrations, its proof is the same as the proof of Lemma 2.2.11 in \cite{Hov}.\end{proof}

In view of Theorem \ref{FinGenModDGDM}, let us recall that any projective chain complex $(K,d)$ is degree-wise projective. Indeed, consider, for $n\ge 0$, an $R$-linear map $k_n:K_n\to N$ and a surjective $R$-linear map $p:M\to N$, and denote by $D^{n+1}(N)$ (resp., $D^{n+1}(M)$) the disc defined as in (\ref{Disc}), except that $R$ is replaced by $N$ (resp., $M$). Then there is a chain map $k:K\to D^{n+1}(N)$ (resp., a surjective chain map $\zp:D^{n+1}(M)\to D^{n+1}(N)$) that is zero in each degree, except in degree $n+1$ where it is $k_n\circ d_{n+1}$ (resp., $p$) and in degree $n$ where it is $k_n$ (resp., $p$). Since $(K,d)$ is projective as chain complex, there is a chain map $\ell:K\to D^{n+1}(M)$ such that $\zp\circ \ell =k$. In particular, $\ell_n:K_n\to M$ is $R$-linear and $p\circ \ell_n=k_n\,.$

\section{Finitely generated model structure on $\tt DG\cD A$}

\subsection{Adjoint functors between $\tt DG\cD M$ and $\tt DG\cD A$}\label{Adjunction}

We aim at transferring to $\tt DG\cD A$ the just described finitely generated model structure on $\tt DG\cD M$. Therefore, we need a pair of adjoint functors.

\begin{prop} The graded symmetric tensor algebra functor $\cS$ and the forgetful functor $\op{For}$ provide an adjoint pair \be\label{Adj}{\cal S}:{\tt DG\cD M}\rightleftarrows{\tt DG\cD A}:\op{For}\;\ee between the category of differential graded $\cD$-modules and the category of differential graded $\cD$-algebras.\end{prop}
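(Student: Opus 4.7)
The plan is to construct $\cS$ explicitly as the graded-symmetric $\cO$-algebra functor and then to verify by hand the required universal property.

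For $M_\bullet\in\tt DG\cD M$, I would first form the tensor algebra $T(M_\bullet)=\bigoplus_{n\ge 0} M_\bullet^{\0_\cO n}$ with $M_\bullet^{\0_\cO 0}=\cO$. By Proposition \ref{MonoidEquiv2} every iterated tensor power lies in $\tt DG\cD M$, and concatenation endows $T(M_\bullet)$ with an associative unital multiplication such that the differential (extended by the graded Leibniz rule) and the $\cD$-action (extended by iteration of (\ref{TensDMod})) both behave as graded derivations of the product; in particular vector fields act by derivations. I would then set $\cS(M_\bullet):=T(M_\bullet)/J(M_\bullet)$, where $J(M_\bullet)$ is the two-sided ideal generated by the homogeneous elements $x\0 y-(-1)^{\tilde x\tilde y}y\0 x$. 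These generators form a $\cD$-stable and $\partial$-stable graded subset in view of (\ref{TensDMod}) and the derivation property of $\partial$, so $J(M_\bullet)$ is itself a $\cD$-submodule stable under $\partial$, and $\cS(M_\bullet)$ inherits from $T(M_\bullet)$ the structure of an object of $\tt DG\cD A$. On arrows, a $\tt DG\cD M$-morphism $f:M_\bullet\to N_\bullet$ extends to $T(f)(m_1\0\cdots\0 m_n)=f(m_1)\0\cdots\0 f(m_n)$, which sends $J(M_\bullet)$ to $J(N_\bullet)$ and descends to a $\tt DG\cD A$-morphism $\cS(f)$; functoriality is immediate.

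Next, for $M_\bullet\in\tt DG\cD M$ and $A_\bullet\in\tt DG\cD A$, I would build a natural bijection
\[
\Phi:\Hom_{\tt DG\cD A}(\cS(M_\bullet),A_\bullet)\;\rightleftarrows\;\Hom_{\tt DG\cD M}(M_\bullet,\op{For}(A_\bullet)):\Psi
\]
as follows. The map $\Phi$ sends $g$ to its restriction $g|_{M_\bullet}$, where $M_\bullet$ is identified with the tensor-weight-one summand of $\cS(M_\bullet)$; this restriction is manifestly a morphism in $\tt DG\cD M$. Conversely, for $f\in\Hom_{\tt DG\cD M}(M_\bullet,\op{For}(A_\bullet))$ I set
\[
\Psi(f)(1_\cO)=1_A,\qquad \Psi(f)(m_1\cdots m_n)=f(m_1)\star_A\cdots\star_A f(m_n).
\]
Graded-commutativity of $A_\bullet$ makes $\Psi(f)$ well-defined on the quotient; $\cO$-linearity is inherited from $f$; the chain map property follows from the graded-Leibniz rule satisfied by the differential of $A_\bullet$; and an induction on $n$ using (\ref{LeibDAlg}) shows that $\Psi(f)$ is $\cD$-linear. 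By construction $\Psi(f)$ is a unital algebra morphism, hence an arrow of $\tt DG\cD A$. The identity $\Phi\Psi=\id$ is clear, while $\Psi\Phi=\id$ holds because $M_\bullet$ generates $\cS(M_\bullet)$ as a $\cD$-algebra, so two $\tt DG\cD A$-morphisms out of $\cS(M_\bullet)$ that agree on $M_\bullet$ coincide. Naturality in both variables is a direct check from the formulas.

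The main obstacle is really only bookkeeping: one must simultaneously track the differential, the $\cD$-action, the chain-complex grading, and the Koszul signs when descending to the quotient $\cS(M_\bullet)$, and the induction showing $\cD$-linearity of $\Psi(f)$ requires the derivation identity (\ref{LeibDAlg}) to propagate through products of arbitrary length. No new conceptual ingredient beyond the symmetric monoidal structure established in Proposition \ref{MonoidEquiv2} is needed.
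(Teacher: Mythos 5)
Your proposal is correct and follows essentially the same route as the paper: construct $\cS_\cO^\ast M_\bullet$ as the quotient of the tensor $\cO$-algebra by the graded-commutator ideal (observing that this ideal is a sub-{\small DG} $\cD$-module), and establish the adjunction bijection by restriction to tensor weight one in one direction and multiplicative extension $\bar\zf(m_1\odot\ldots\odot m_k)=\zf(m_1)\star_A\ldots\star_A\zf(m_k)$ in the other. The paper leaves the verifications (chain-map property, $\cD$-linearity, functoriality) as ``straightforwardly checked,'' exactly the bookkeeping you flag.
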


\begin{proof} For any $M_\bullet\in{\tt DG\cD M}$, the sum $$\0_\cO^\ast M_\bullet=\cO\oplus\bigoplus_{n\ge 1}M_\bullet^{\0_\cO n}\in{\tt DG\cD M}\;$$ is the free associative unital $\cO$-algebra over the $\cO$-module $M_\bullet\,.$ When passing to graded symmetric tensors, we divide by the obvious $\cO$-ideal $\cI$, which is further a sub {\small DG} $\cD$-module. Therefore, the free graded symmetric unital $\cO$-algebra \be\label{Alg1}{\cal S}_\cO^\ast M_\bullet=\0_\cO^\ast M_\bullet/{\cal I}\;,\ee with multiplication $[S]\odot [T]=[S\0 T]\,$, is also a {\small DG} $\cD$-module. It is straightforwardly checked that ${\cal S}_\cO^\ast M_\bullet\in {\tt DG\cD A}$. The definition of $\cS$ on morphisms is obvious.\medskip

As concerns the proof that the functors $\op{For}$ and $\cal S$ are adjoint, i.e., that \be\label{Adjoint}\h_{\tt DG\cD A}({\cal S}_\cO^\ast M_\bullet,A_\bullet)\simeq \h_{\tt \tt DG\cD M}(M_\bullet,\op{For}A_\bullet)\;,\ee functorially in $M_\bullet\in{\tt DG\cD M}$ and $A_\bullet\in{\tt DG\cD A}\,$, let $\zf:M_\bullet\to \op{For}A_\bullet$ be a $\tt DG\cD M$-map. Since ${\cal S}_\cO^\ast M_\bullet$ is free in the category $\tt GCA$ of graded commutative associative unital graded $\cO$-algebras, a $\tt GCA$-morphism is completely determined by its restriction to the graded $\cO$-module $M_\bullet\,$. Hence, the extension $\bar\zf:{\cal S}_\cO^\ast M_\bullet\to A_\bullet$ of $\zf$, defined by $\bar\zf(1_\cO)=1_{A}$ and by $$\bar\zf(m_1\odot\ldots\odot m_k)=\zf(m_1)\star_A\ldots\star_A\zf(m_k)\;,$$ is a $\tt GCA$-morphism. This extension is also a $\tt DG\cD A$-map, i.e., a $\tt DG\cD M$-map that respects the multiplications and the units, if it intertwines the differentials and is $\cD$-linear. These requirements, as well as functoriality, are straightforwardly checked. \end{proof}

Recall that a free object in a category $\tt D$ over an object $C$ in a category $\tt C$, such that there is a forgetful functor $\op{For}:\tt D\to C$, is a universal pair $(F(C),i)$, where $F(C)\in\tt D$ and $i\in\op{Hom}_{\tt C}(C,\op{For} F(C))\,$.

\begin{rem}\label{FreeDGDA} Equation (\ref{Adjoint}) means that ${\cal S}_\cO^\star M_\bullet$ is the {\bf free differential graded $\cD$-algebra} over the differential graded $\cD$-module $M_\bullet\,$.\end{rem}

A definition of $\cS_\cO^\ast M_\bullet$ via invariants can be found in Appendix \ref{InvCoinv}.

\subsection{Relative Sullivan $\cD$-algebras}\label{RSDA}

If $V_\bullet$ is a non-negatively graded $\cD$-module and $(A_\bullet,d_A)$ a differential graded $\cD$-algebra, the tensor product $A_\bullet\0_\cO\cS^\star_\cO V_\bullet$ is a graded $\cD$-algebra. In the following definition, we assume that this algebra is equipped with a differential $d$, such that $$(A_\bullet\0_\cO\cS^\star_\cO V_\bullet,d)\in\tt DG\cD A$$ contains $(A_\bullet,d_A)$ as sub-{\small DG$\cD$A}. The point is that $(A_\bullet,d_A)$ is a differential submodule of the tensor product differential module, but that usually the module $\cS^\star_\cO V_\bullet$ is not. The condition that $(A_\bullet,d_A)$ be a sub-{\small DG$\cD$A} can be rephrased by asking that the inclusion $$A_\bullet\ni a\mapsto a\0 1\in A_\bullet\0_\cO\cS^\star_\cO V_\bullet\;$$ be a $\tt DG\cD A$-morphism. This algebra morphism condition or subalgebra condition would be automatically satisfied {if the differential $d$ on $A_\bullet\0_\cO\cS^\star_\cO V_\bullet$ were} defined by \be\label{split}d=d_A\0 \id + \id\0 d_\cS\;,\ee where $d_\cS$ is a differential on $\cS^\star_\cO V_\bullet$ (in particular the differential $d_\cS=0$). However, as mentioned, this is generally not the case.\medskip

We omit in the following $\bullet,$ $\star,$ as well as subscript $\cO$, provided clarity does not suffer hereof. Further, to avoid confusion, we sometimes substitute {$\boxtimes$ for $\0$} to emphasize that the differential $d$ of $A\boxtimes\cS V$ is not necessarily obtained from the differential $d_A$ and a differential $d_\cS$.\footnote{{Such twisted differentials typically appear when one adds new generators to improve homological properties and in particular to kill homology in lower degrees.}}\medskip

{We now give the $\cD$-algebraic version of the definition of a relative Sullivan algebra \cite{FHT}. Note that the factorizations that are considered in \cite{FHT} are not, as the factorizations here below, obtained via pushouts and are not functorial.}

\begin{defi}\label{RSullDAlg} A {\bf relative Sullivan $\cD$-algebra} $(\,${\small RS$\cD\!$A}$\,)$ is a $\tt DG\cD A$-morphism
$$(A,d_A)\to(A\boxtimes \cS V,d)\;$$ that sends $a\in A$ to $a\0 1\in A\boxtimes \cS V$. Here $V$ is a free non-negatively graded $\mathcal{D}$-module $$V=\bigoplus_{\za\in J}\,\cD\cdot v_\za\;,$$ which admits a homogeneous basis $(v_\za)_{\za\in J}$ that is indexed by a well-ordered set $J$, and is such that \be\label{Lowering}d v_\za \in A\boxtimes \cS V_{<\za}\;,\ee for all $\za\in J$. In the last requirement, we set $V_{<\za}:=\bigoplus_{\zb<\za}\cD\cdot v_\zb\,$. We refer to Property (\ref{Lowering}) by saying that $d$ is {\bf lowering}.

A {\small RS$\cD\!$A} with Property (\ref{split}) $(\,$resp., over $(A,d_A)=(\cO,0)$$\,)$ is called a {\bf split} {\small RS$\cD\!$A} $(\,$resp., a {\bf Sullivan $\cD$-algebra} $(\,${\small S$\cD\!$A}$\,)$$\,)$  and it is often simply denoted by $(A\otimes \cS V,d)$ $(\,$resp.,\;$(\cS V,d)$$\,)$.
\end{defi}

The next two lemmas are of interest for the split situation.

\begin{lem}\label{DiffGen} Let $(v_{\za})_{\za\in I}$ be a family of generators of homogeneous non-negative degrees, and let $$V:=\langle v_\za: \za\in I\ra:=\bigoplus_{\za\in I}\,\cD\cdot v_\za$$ be the free non-negatively graded $\cD$-module over $(v_\za)_{\za\in I}$. Then, any degree $-1$ map $d\in {\tt Set}((v_\za),V)$ uniquely extends to a degree $-1$ map $d\in{\tt \cD M}(V,V)$. If moreover $d^2=0$ on $(v_\za)$, then $(V,d)\in\tt DG\cD M\,.$  \end{lem}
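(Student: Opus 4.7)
The plan is to invoke the universal property of the free graded $\cD$-module on the indexed family $(v_\alpha)_{\alpha\in I}$. Since $\cD$ is concentrated in degree $0$, the decomposition $V=\bigoplus_{\alpha\in I}\cD\cdot v_\alpha$ is also a direct sum of graded $\cD$-modules, where the summand $\cD\cdot v_\alpha$ is supported in the homogeneous degree of $v_\alpha$. Accordingly, $V$ is the free object, in the category of graded $\cD$-modules, over the graded set $(v_\alpha)_{\alpha\in I}$, so that degree-$(-1)$ $\cD$-linear maps $V\to V$ correspond bijectively to set maps $(v_\alpha)_{\alpha\in I}\to V$ that lower the degree by one.

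Concretely, I would \emph{define} the extension on each summand by $d(P\cdot v_\alpha):=P\cdot d(v_\alpha)$ for $P\in\cD$, and then extend $\cD$-linearly to a direct sum $\sum_\alpha P_\alpha\cdot v_\alpha$ (only finitely many terms nonzero). Well-definedness is immediate from the direct sum decomposition; $\cD$-linearity holds on the nose since for $Q\in\cD$ one has $d(Q\cdot(P\cdot v_\alpha))=d((QP)\cdot v_\alpha)=QP\cdot d(v_\alpha)=Q\cdot d(P\cdot v_\alpha)$; and the map is of degree $-1$ because $|P\cdot v_\alpha|=|v_\alpha|$ (as $P$ has degree $0$), while $|d(v_\alpha)|=|v_\alpha|-1$ by assumption. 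Uniqueness is forced: any $\cD$-linear extension $\tilde d$ must satisfy $\tilde d(P\cdot v_\alpha)=P\cdot\tilde d(v_\alpha)=P\cdot d(v_\alpha)$, and agreement on the generating summands implies agreement on $V$.

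For the second assertion, I would simply observe that the composite $d^{\,2}:V\to V$ is again $\cD$-linear (of degree $-2$), being the composition of two $\cD$-linear maps; this uses only that $\cD$ sits in degree $0$, so no Koszul signs appear. If $d^{\,2}(v_\alpha)=0$ for every generator $v_\alpha$, then $d^{\,2}(P\cdot v_\alpha)=P\cdot d^{\,2}(v_\alpha)=0$ on every summand, hence $d^{\,2}=0$ on $V$ by the same universal property invoked above. Combined with the grading and the degree $-1$ property, this gives $(V,d)\in\tt DG\cD M$.

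There is no real obstacle here: the lemma is essentially a bookkeeping statement expressing the universal property of a free graded $\cD$-module, and the only point deserving explicit mention is that $\cD$ is concentrated in degree $0$, which is what allows a set-theoretic assignment that lowers degree by one to extend to a $\cD$-linear map of the same degree shift without any sign subtleties.
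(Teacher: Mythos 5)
Your proof is correct and is exactly the argument the paper has in mind: the lemma is stated without proof precisely because it is the universal property of the free (graded) $\cD$-module $V=\bigoplus_\alpha\cD\cdot v_\alpha$, with $\cD$ concentrated in degree $0$ so that the degree bookkeeping is trivial. The only micro-observation you could add is that for generators of degree $0$ the condition forces $d(v_\alpha)=0$ since $V_{-1}=\{0\}$, but this is a degenerate case and not a gap.
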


Since $\cS V$ is the free differential graded $\cD$-algebra over the differential graded $\cD$-module $V$, a morphism $f\in {\tt DG\cD A}(\cS V,B),$ valued in $(B,d_B)\in {\tt DG\cD A}$, is completely defined by its restriction $f\in {\tt DG\cD M}(V,B)$. Hence, the

\begin{lem}\label{MorpGen} Consider the situation of Lemma \ref{DiffGen}. Any degree 0 map $f\in {\tt Set}((v_\za), B)$ uniquely extends to a morphism $f\in{\tt G\cD M}(V,B)$. Furthermore, if $d_B\,f=f\,d$ on $(v_\za)$, this extension is a morphism $f\in{\tt DG\cD M}(V,B),$ which in turn admits a unique extension $f\in{\tt DG\cD A}(\cS V,B)$.\end{lem}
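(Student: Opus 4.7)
The plan is to dispatch the three assertions in sequence, using freeness of $V$ as a graded $\cD$-module for the first two and then the adjunction \eqref{Adjoint} for the third, so that no serious computation is required.

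First I would establish the graded $\cD$-module extension. Since $V=\bigoplus_{\za\in I}\cD\cdot v_\za$ is free as a graded $\cD$-module on the homogeneous family $(v_\za)$, any degree-$0$ set map $f:(v_\za)\to B$ extends uniquely to a degree-$0$ $\cD$-linear map $V\to B$ by the forced prescription $f\bigl(\sum_\za D_\za\, v_\za\bigr):=\sum_\za D_\za\cdot f(v_\za)$. Well-definedness is guaranteed by the basis property of $(v_\za)$, and uniqueness follows from $\cD$-linearity together with the fact that the $v_\za$ generate $V$. This yields the map announced in the first sentence, living in $\tt G\cD M(V,B)$.

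Next, under the hypothesis $d_B f=f d$ on generators, I would promote $f$ to a $\tt DG\cD M$-morphism. By Lemma \ref{DiffGen} the differential $d$ on $V$ is $\cD$-linear (being a morphism in $\tt\cD M$), and $d_B$ is $\cD$-linear by the definition of $\tt DG\cD M$. For a typical element $D\cdot v_\za$, a one-line check gives
\[
d_B f(D\cdot v_\za)=D\cdot d_B f(v_\za)=D\cdot f d(v_\za)=f(D\cdot d v_\za)=f d(D\cdot v_\za),
\]
and both $d_Bf$ and $fd$ are additive, so they agree on the whole of $V=\bigoplus_\za \cD\cdot v_\za$. Hence $f\in\tt DG\cD M(V,B)$.

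Finally, given the $\tt DG\cD M$-morphism $f:V\to \op{For}(B)$, the adjunction \eqref{Adjoint} (equivalently Remark \ref{FreeDGDA}, which asserts that $\cS V$ is the free $\tt DG\cD A$ over $V$) produces a unique $\tt DG\cD A$-morphism $\bar f:\cS V\to B$ restricting to $f$ on $V$; concretely $\bar f(1_\cO)=1_B$ and $\bar f(v_{\za_1}\odot\cdots\odot v_{\za_k})=f(v_{\za_1})\star_B\cdots\star_B f(v_{\za_k})$. Since each step is a direct invocation of a universal property, I do not expect any genuine obstacle; the main point of care is to keep the three successive levels of structure distinct (graded $\cD$-module, differential graded $\cD$-module, $\tt DG\cD A$) and to note that freeness at one level makes compatibility with the next level of structure automatic as soon as it is verified on generators.
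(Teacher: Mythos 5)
Your proposal is correct and follows essentially the same route as the paper, which justifies the lemma by freeness of $V$ in $\tt G\cD M$ (forcing the $\cD$-linear extension and its compatibility with the differentials once checked on generators) together with the adjunction (\ref{Adjoint}), i.e., the freeness of $\cS V$ as a differential graded $\cD$-algebra over $V$. No gaps.
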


\subsection{Quillen's transfer theorem}

We use the adjoint pair \be\label{Ad}{\cal S}:{\tt DG\cD M}\rightleftarrows{\tt DG\cD A}:\op{For}\;\ee to transfer the cofibrantly generated model structure from the source category $\tt DG\cD M$ to the target category $\tt DG\cD A$. This is possible if Quillen's transfer theorem \cite{Quill} applies.

\begin{theo}\label{QTT}
Let $ F : {\tt C} \rightleftarrows {\tt D} : G $ be a pair of adjoint functors. Assume that $\tt C$ is a cofibrantly generated model category and denote by $I$ (resp., $J$) its set of generating cofibrations (resp., trivial cofibrations). Define a morphism $f : X \to Y$ in $\tt D$ to be a weak equivalence (resp., a fibration), if $Gf$ is a weak equivalence (resp., a fibration) in $\tt C$. If
\begin{enumerate}
\item
the right adjoint $G : {\tt D} \to {\tt C}$ commutes with sequential colimits, and
\item
any map in $\tt D$ with the {\small LLP} with respect to all fibrations is a weak equivalence,
\end{enumerate}
then $\tt D$ is a cofibrantly generated model category that admits $\{ Fi: i \in I \}$ (resp., $\{ Fj : j \in J \}$) as set of generating cofibrations (resp., trivial cofibrations).
\end{theo}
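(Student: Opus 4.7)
The strategy is to pull the cofibrantly generated structure from $\tt C$ back to $\tt D$ through the adjunction, verifying each model-category axiom on $\tt D$ via $G$ and the two hypotheses.

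First I would use the adjunction $(F,G)$ to translate lifting conditions: for any $i:A\to B$ in $\tt C$ and any $f:X\to Y$ in $\tt D$, a lifting problem of $Fi$ against $f$ corresponds bijectively to one of $i$ against $Gf$, so $Fi$ has the {\small LLP} with respect to $f$ if and only if $i$ has the {\small LLP} with respect to $Gf$. Combined with the equalities $\op{Fib}_{\tt C}=\op{RLP}(J)$ and $\op{TrivFib}_{\tt C}=\op{RLP}(I)$ already available in $\tt C$, this identifies the fibrations (resp., trivial fibrations) of $\tt D$ with $\op{RLP}(\{Fj:j\in J\})$ (resp., $\op{RLP}(\{Fi:i\in I\})$); in particular the trivial fibrations of $\tt D$ coincide with the maps that are simultaneously fibrations and weak equivalences.

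Next I would verify the smallness hypothesis needed to apply the small object argument to $\{Fi\}$ and $\{Fj\}$. For any domain $A$ of a map in $I\cup J$, the functor $\op{Hom}_{\tt D}(FA,-)\cong \op{Hom}_{\tt C}(A,G(-))$ commutes with sequential colimits, since $G$ does so by Condition (1) and $\op{Hom}_{\tt C}(A,-)$ does so by the smallness of $A$ in $\tt C$. The small object argument then supplies two functorial factorizations of any $f$ in $\tt D$: $f=p\circ k$ with $k\in\mathrm{cell}(\{Fi\})$ and $p\in\op{RLP}(\{Fi\})$, and $f=q\circ j$ with $j\in\mathrm{cell}(\{Fj\})$ and $q\in\op{RLP}(\{Fj\})$. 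The left factor $k$ of the first factorization has the {\small LLP} with respect to every trivial fibration (by the adjunction translation of the first step), hence is a cofibration, while $p$ is a trivial fibration.

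The main obstacle is the second factorization: by construction $j$ has the {\small LLP} with respect to every fibration, but we still need it to be a weak equivalence in order to qualify as a trivial cofibration. Condition (2) is tailored precisely to this, and with it $j$ becomes a trivial cofibration while $q$ is manifestly a fibration. The remaining axioms are then routine: the 2-out-of-3 and retract closure of weak equivalences follow from the corresponding properties in $\tt C$ via $G$, and the classes of fibrations and cofibrations are retract-closed because classes defined by $\op{RLP}$ (resp., $\op{LLP}$) always are. Finally, the lifting of trivial cofibrations against fibrations follows from the classical retract trick: factor a trivial cofibration $i$ as $q\circ j$ with $j\in\mathrm{cell}(\{Fj\})$ and $q$ a fibration; then $q$ is a weak equivalence by 2-out-of-3, hence a trivial fibration, so $i$ lifts against $q$ and is thereby a retract of $j$, inheriting the {\small LLP} with respect to all fibrations.
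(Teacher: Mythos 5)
Your argument is the standard proof of Quillen's transfer theorem and is essentially correct; note, however, that the paper does not prove Theorem \ref{QTT} at all --- it quotes the statement from \cite{Quill} and only verifies its two hypotheses for the adjunction $\cS\dashv\op{For}$ --- so there is no in-paper proof to compare against. Your key steps (the adjunction bijection identifying $\op{Fib}_{\tt D}=\op{RLP}(\{Fj\})$ and $\op{TrivFib}_{\tt D}=\op{RLP}(\{Fi\})$, the small object argument for both generating sets, Condition (2) upgrading the $\{Fj\}$-cell factor to a trivial cofibration, and the retract trick for the lifting of trivial cofibrations against fibrations) are exactly the standard ones and are sound. Two points are left implicit and are worth flagging. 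First, the small object argument needs $\tt D$ to possess the pushouts and sequential colimits used to build cell complexes, and axiom MC1 needs all small limits and colimits in $\tt D$; this is a standing hypothesis not addressed in your proof (it holds for $\tt DG\cD A$ in the paper's application). Second, your smallness verification $\op{Hom}_{\tt D}(FA,\colim X_n)\cong\colim\op{Hom}_{\tt D}(FA,X_n)$ uses that $A$ is sequentially small with respect to the maps $GX_n\to GX_{n+1}$; if the smallness of $A$ in $\tt C$ is only relative to the class of cofibrations (as in the definition of \cite{GS}), one must additionally know that $G$ carries relative $\{Fi\}$-cell complexes into that class, or --- as in the paper's situation, where the domains are bounded complexes of finitely presented modules --- that the domains are small relative to \emph{all} maps. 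With these caveats made explicit, the proof is complete.
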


Of course, in this version of the transfer principle, the mentioned model structures are cofibrantly generated model structures in the sense of \cite{GS}.\medskip

Condition 2 is the main requirement of the transfer theorem. It can be checked using the following lemma \cite{Quill}:

\begin{lem}[Quillen's path object argument]\label{SuffCondFor2}
Assume in a category $\tt D$ (which is not yet a model category, but has weak equivalences and fibrations),
\begin{enumerate}
\item there is a functorial fibrant replacement functor, and
\item every object has a natural path object, i.e., for any $D\in \tt D$, we have a natural commutative
diagram

\begin{center}
\begin{tikzpicture}
\node(C){$D$};
\node(A)[right of=C, xshift=2cm]{$D\times D$};
\node(P)[above of=A,yshift=2cm]{$\op{Path}(D)$};
\draw[->](C) to node[above]{${{\Delta}}$} (A);
\draw[->](C) to node{$\hspace{-30pt}i$} (P);
\draw[->](P) to node[below]{$\hspace{17pt} q$} (A);
\end{tikzpicture}
\end{center}

\end{enumerate}
where $\zD$ is the diagonal map, $i$ is a weak equivalence and $q$ is a fibration.
Then every map in $\tt D$ with the {\small LLP} with respect to all fibrations is a weak
equivalence.
\end{lem}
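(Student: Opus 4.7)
The plan is to show that any $\tt D$-morphism $f: A \to B$ with the LLP against all fibrations is a weak equivalence, via two successive lifts supplied by the two hypotheses. Denote by $r_A: A \to RA$ and $r_B: B \to RB$ the functorial fibrant replacements of hypothesis~(1) and by $Rf: RA \to RB$ the induced map, so that $Rf \circ r_A = r_B \circ f$ by naturality.

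First, since $RA$ is fibrant, the terminal map $RA \to *$ is a fibration; the LLP of $f$ yields a lift $t: B \to RA$ satisfying $t \circ f = r_A$. Second, apply hypothesis~(2) to $RB$ to produce the path object
\[
RB \xrightarrow{\, i \,} \op{Path}(RB) \xrightarrow{(p_0, p_1)} RB \times RB,
\]
with $i$ a weak equivalence and $(p_0, p_1)$ a fibration. Because $(Rf \circ t) \circ f = Rf \circ r_A = r_B \circ f$, the square
\[
\begin{tikzcd}
A \ar[r, "i \circ r_B \circ f"] \ar[d, "f"'] & \op{Path}(RB) \ar[d, "(p_0{,}\, p_1)"] \\
B \ar[r, "(Rf \circ t{,}\, r_B)"'] & RB \times RB
\end{tikzcd}
\]
commutes, and the LLP of $f$ furnishes a right homotopy $H: B \to \op{Path}(RB)$ with $p_0 H = Rf \circ t$ and $p_1 H = r_B$.

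The identities $p_0 i = p_1 i = \id_{RB}$, together with the fact that $i$ is a weak equivalence, force by 2-out-of-3 each of $p_0, p_1$ to be a weak equivalence. Applying 2-out-of-3 once more to $p_1 H = r_B$ shows $H$ is a weak equivalence, and then to $p_0 H = Rf \circ t$ shows $Rf \circ t$ is a weak equivalence. We thus have a composable string $A \xrightarrow{f} B \xrightarrow{t} RA \xrightarrow{Rf} RB$ in which both middle composites $t \circ f = r_A$ and $Rf \circ t$ are weak equivalences; 2-out-of-6 then forces $f$ (and in fact each of $f$, $t$, $Rf$) to be a weak equivalence.

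The delicate point is this final invocation of 2-out-of-6, as only 2-out-of-3 is explicitly axiomatised for weak equivalences. However, 2-out-of-6 is automatic in any model category, because a morphism is a weak equivalence iff it becomes an isomorphism in the homotopy category, and isomorphisms satisfy 2-out-of-6 in any category. In the transfer setting of Theorem~\ref{QTT}, where the weak equivalences of $\tt D$ are exactly the morphisms whose image under the right adjoint $G$ is a weak equivalence in the model category $\tt C$, the 2-out-of-6 property passes at once from $\tt C$ to $\tt D$, which is all we need.
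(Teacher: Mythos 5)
The paper does not actually prove Lemma \ref{SuffCondFor2}: it is quoted from \cite{Quill} and used as a black box, so there is no in-text argument to compare yours against. Judged on its own, your proof is correct and is essentially the standard path-object argument, with the extra bookkeeping needed because you work with a fibrant replacement $r_A:A\to RA$, $r_B:B\to RB$ rather than assuming every object fibrant (in the paper's application to $\tt DG\cD A$ every object \emph{is} fibrant and one could take $R=\op{id}$, which collapses your diagram to the usual one with a retraction $t:B\to A$, $tf=\op{id}_A$). Both lifting squares are verified correctly, and the chain of 2-out-of-3 deductions giving that $p_0$, $p_1$, $H$ and hence $Rf\circ t$ are weak equivalences is sound.

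You are also right that the final step is the only non-formal one: from $t\circ f$ and $Rf\circ t$ being weak equivalences one cannot conclude with 2-out-of-3 alone, and some form of 2-out-of-6 (equivalently, saturation) is genuinely needed. Your justification — that in the transfer setting of Theorem \ref{QTT} the weak equivalences of $\tt D$ are created by the right adjoint $G$ from a model category $\tt C$, whose weak equivalences are saturated and hence satisfy 2-out-of-6 — is the standard resolution and is exactly what Quillen's argument uses. Two small remarks. First, your appeal to the terminal map $RA\to\ast$ being a fibration tacitly assumes that $\tt D$ has a terminal object and that ``fibrant'' means what it should; this is harmless here but worth saying, since $\tt D$ is only assumed to carry weak equivalences and fibrations. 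Second, in the paper's concrete situation one can bypass saturation altogether: weak equivalences are quasi-isomorphisms, and from $H(t)H(f)$ and $H(Rf)H(t)$ being isomorphisms of graded $\cD$-modules one gets that $H(t)$ is both epi and mono, hence an isomorphism, whence so is $H(f)$ — the 2-out-of-6 property for isomorphisms, applied at the level of homology. Either way, the argument stands.
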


We think about $\op{Path}(D)\in\tt D$ is an internalized `space' of paths in $D$. In simple cases, $\op{Path}(D)=\h_{\tt D}(I,D)$, where $I\in\tt D$ and where $\h_{\tt D}$ is an internal Hom. Moreover, by fibrant replacement of an object $D\in\tt D$, we mean a weak equivalence $D\to \bar{D}$ whose target is a fibrant object.

\subsection{Proof of Condition 1 of Theorem \ref{QTT}}

Let $\zl$ be a non-zero ordinal and let $X:\zl\to \tt C$ be a diagram of type $\zl$ in a category $\tt C$, i.e., a functor from $\zl$ to $\tt C$. Since an ordinal number is a totally ordered set, the considered ordinal $\zl$ can be viewed as a directed poset $(\zl,\le)$. Moreover, the diagram $X$ is a direct system in $\tt C$ over $\zl$ -- made of the $\tt C$-objects $X_\zb$, $\zb<\zl$, and the $\tt C$-morphisms $X_{\zb\zg}:X_\zb\to X_\zg$, $\zb\le\zg$, and the colimit $\colim_{\zb<\zl}X_\zb$ of this diagram $X$ is the inductive {limit to the} system $(X_\zb,X_{\zb\zg})$.\medskip

Let now $A:\zl\to \tt DG\cD A$ be a diagram of type $\zl$ in $\tt DG\cD A$ and let $\op{For}\circ A:\zl\to \tt DG\cD M$ be the corresponding diagram in $\tt DG\cD M$. To simplify notation, we denote the latter diagram simply by $A$. As mentioned in the proof of Theorem \ref{FinGenModDGDM}, the colimit of $A$ does exist in $\tt DG\cD M$ and is taken degree-wise in ${\tt Mod}(\cD)$. For any degree $r\in\N$, the colimit $C_r$ of the functor $A_r:\zl\to\tt Mod(\cD)$ is the inductive {limit in $\tt Mod(\cD)$ to the} direct system $(A_{\zb,r}, A_{\zb\zg,r})$ -- which is obtained via the usual construction in $\tt Set$. Due to universality, one naturally gets a ${\tt Mod}(\cD)$-morphism $d_{r}:C_r\to C_{r-1}$. The complex $(C_\bullet,d)$ is the colimit in $\tt DG\cD M$ of $A$. It is now straightforwardly checked that the canonical multiplication $\diamond$ in $C_\bullet$ provides an object $(C_\bullet,d,\diamond)\in \tt DG\cD A$ and that this object is the colimit of $A$ in $\tt DG\cD A$.\medskip

Hence, the

\begin{prop}\label{Enrichment} Let $\zl$ be a non-zero ordinal. The forgetful functor $\op{For}:\tt DG\cD A\to DG\cD M$ creates colimits of diagrams of type $\lambda$ in $\tt DG\cD A$, i.e., for any diagram $A$ of type $\lambda$ in $\tt DG\cD A$, we have \be\label{EnrichmentEq}\op{For}(\op{colim}_{\zb<\zl}A_{\zb,\bullet})=\op{colim}_{\zb<\zl}\op{For}(A_{\zb,\bullet})\;.\ee\end{prop}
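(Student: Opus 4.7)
The plan is to construct the algebra structure on the $\tt DG\cD M$-colimit and then verify it satisfies the universal property in $\tt DG\cD A$. Let $A:\zl\to\tt DG\cD A$ be a diagram, write $(A_\zb,d_\zb,\zm_\zb,\zi_\zb)$ for its objects and $A_{\zb\zg}:A_\zb\to A_\zg$ for the transition maps, and let $(C_\bullet,d)$ be the colimit of the underlying diagram $\op{For}\circ A$ in $\tt DG\cD M$, with coprojections $\iota_\zb:A_\zb\to C_\bullet$, built degree-wise in ${\tt Mod}(\cD)$ as recalled in the paragraph preceding the proposition.

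To build the multiplication, I would exploit two facts. First, $\bullet\0_\cO\bullet$ is left adjoint in each slot (closed symmetric monoidal structure), so it commutes with arbitrary colimits in each variable; consequently
\[
C_\bullet\0_\cO C_\bullet\;\simeq\;\colim_{\zb<\zl}\colim_{\zg<\zl}\bigl(A_{\zb,\bullet}\0_\cO A_{\zg,\bullet}\bigr)\;.
\]
Second, a non-zero ordinal $\zl$ is a totally ordered (hence filtered) poset, and the diagonal $\zl\hookrightarrow\zl\times\zl$ is cofinal. Therefore the right-hand side reduces to $\colim_{\zb<\zl}(A_{\zb,\bullet}\0_\cO A_{\zb,\bullet})$. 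The family $\iota_\zb\circ\zm_\zb:A_{\zb,\bullet}\0_\cO A_{\zb,\bullet}\to C_\bullet$ is a cocone because each $A_{\zb\zg}$ is a $\tt DG\cD A$-morphism (hence commutes with multiplication), so it induces a $\tt DG\cD M$-morphism $\diamond:C_\bullet\0_\cO C_\bullet\to C_\bullet$. Similarly the common composite $\iota_\zb\circ\zi_\zb:\cO\to C_\bullet$ (well-defined as the $A_{\zb\zg}$ preserve units) serves as unit $\zi:\cO\to C_\bullet$.

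Next I would verify the $\tt DG\cD A$-axioms for $(C_\bullet,d,\diamond,\zi)$. Associativity, graded-commutativity, unitality, the Leibniz rule for $d$, and the derivation property $\nabla_\zy(a\diamond a')=(\nabla_\zy a)\diamond a'+a\diamond(\nabla_\zy a')$ all amount to equalities of two $\tt DG\cD M$-morphisms whose restrictions along each coprojection $\iota_\zb$ (or $\iota_\zb\0\iota_\zb$, $\iota_\zb\0\iota_\zb\0\iota_\zb$) agree by the corresponding axiom in $A_{\zb,\bullet}$; the universal property of the colimit (applied, where needed, to the iterated tensor product, again cofinal by the filteredness of $\zl$) forces the identities on $C_\bullet$. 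This promotes $(C_\bullet,d)$ canonically to $(C_\bullet,d,\diamond,\zi)\in\tt DG\cD A$, and the $\iota_\zb$ become $\tt DG\cD A$-morphisms.

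Finally, to verify the universal property in $\tt DG\cD A$, take a cocone $(f_\zb:A_\zb\to B)$ in $\tt DG\cD A$. Its underlying $\tt DG\cD M$-cocone factors uniquely through $C_\bullet$ as $f:C_\bullet\to B$. Both $f\circ\diamond$ and $\zm_B\circ(f\0 f)$ are $\tt DG\cD M$-morphisms out of $C_\bullet\0_\cO C_\bullet$, and by the cofinal-diagonal argument above it suffices to check their equality after precomposition with each $\iota_\zb\0\iota_\zb$; this equality reduces to the fact that $f_\zb$ is multiplicative. Unit preservation is analogous. Uniqueness of $f$ is inherited from the $\tt DG\cD M$-colimit, proving \eqref{EnrichmentEq}. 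The only delicate point is the interaction of tensor product with the colimit; this is handled uniformly by the cofinality of the diagonal in $\zl\times\zl$, which is the place where the chain structure of $\zl$ is essential.
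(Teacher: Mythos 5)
Your proof is correct and reaches the paper's conclusion by the same overall strategy -- compute the colimit in $\tt DG\cD M$ and promote it to a differential graded $\cD$-algebra -- but the way you manufacture the multiplication is genuinely different. The paper realizes the colimit concretely via ``the usual construction in $\tt Set$'' (degree-wise, as a quotient of the disjoint union of the $A_{\zb,\bullet}$ over the directed poset $\zl$) and then declares the ``canonical multiplication'' $\diamond$, implicitly defined element-wise by multiplying two classes at a common stage; well-definedness, the algebra axioms, and the universal property are left as ``straightforwardly checked''. You instead argue categorically: $\0_\cO$ is a left adjoint in each slot and hence preserves colimits, the diagonal $\zl\to\zl\times\zl$ is cofinal because a non-empty totally ordered set is directed, so $C_\bullet\0_\cO C_\bullet\simeq\colim_{\zb<\zl}(A_{\zb,\bullet}\0_\cO A_{\zb,\bullet})$ and the cocone $\iota_\zb\circ\zm_\zb$ induces $\diamond$, after which every axiom and the universal property follow from uniqueness of maps out of a colimit. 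Your route has the merit of isolating exactly the two inputs -- the monoidal closedness of $\tt DG\cD M$ and the directedness of $\zl$ -- and of making transparent why the statement fails for $\zl=0$ (the paper's remark following the proposition): the unit $\iota_\zb\circ\zi_\zb$ requires at least one index $\zb$, and indeed the initial objects $\cO$ of $\tt DG\cD A$ and $\{0\}$ of $\tt DG\cD M$ differ. The paper's element-wise description buys concreteness that it reuses later, e.g.\ in the proof of Lemma \ref{Lem4}, where the direct limit is identified with an ordinary union; your argument is the cleaner one to cite if one only needs the creation-of-colimits statement itself.
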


If $\zl$ is the zero ordinal, it can be viewed as the empty category $\emptyset$. Therefore, the colimit in $\tt DG\cD A$ of the diagram of type $\zl$ is in this case the initial object $(\cO,0)$ of $\tt DG\cD A$. Since the initial object in $\tt DG\cD M$ is $(\{0\},0)$, we see that $\op{For}$ does not commute with this colimit. The above proof fails indeed, as $\emptyset$ is not a directed set.\medskip

It follows from Proposition \ref{Enrichment} that the right adjoint $\op{For}$ in (\ref{Ad}) commutes with sequential colimits, so that the first condition of Theorem \ref{QTT} is satisfied.

\begin{rem} Since a right adjoint functor between accessible categories preserves all filtered colimits, the first condition of Theorem \ref{QTT} is a consequence of the accessibility of $\tt DG\cD M$ and $\tt DG\cD A$. We gave a direct proof to avoid the proof of the accessibility of $\tt DG\cD A$.\end{rem}

\subsection{Proof of Condition 2 of Theorem \ref{QTT}}\label{Condition2}

We prove Condition 2 using Lemma \ref{SuffCondFor2}. In our case, the adjoint pair is $${\cal S}:{\tt DG\cD M}\rightleftarrows{\tt DG\cD A}:\op{For}\;.$$ As announced in Subsection \ref{RSDA}, we omit $\bullet$, $\star$, and $\cO$, whenever possible. It is clear that every object $A\in{\tt D}={\tt DG\cD A}$ is fibrant. Hence, we can choose the identity as fibrant replacement functor, with the result that the latter is functorial.\medskip

As for the second condition of the lemma, we will show that {\it any} $\,\tt DG\cD A$-morphism $\zf:A\to B$ naturally factors into a weak equivalence followed by a fibration.\medskip

Since in the standard model structure on the category of differential graded commutative algebras over $\Q$, cofibrations are retracts of relative Sullivan algebras \cite{Hes}, the obvious idea is to decompose $\zf$ as $A\to A\0\cS V\to B$, where $i: A\to A\0\cS V$ is a (split) relative Sullivan $\cD$-algebra, such that there is a projection $p: A\0\cS V\to B$, or, even better, a projection $\ze: V\to B$ in positive degrees. The first attempt might then be to use $$\ze:V=\bigoplus_{n>0}\bigoplus_{b_n\in B_n}\cD\cdot 1_{b_n}\ni 1_{b_n}\mapsto b_n\in B\;,$$ whose source incorporates a copy of the sphere $S^n$ for each $b_n\in B_n$, $n>0\,.$ However, $\ze$ is not a chain map, since in this case we would have $d_Bb_n=d_B \ze 1_{b_n}=0$, for all $b_n$. The next candidate is obtained by replacing $S^n$ by $D^n$: if $B\in {\tt DG\cD M}$, set $$P(B)=\bigoplus_{n>0}\bigoplus_{b_n\in B_n}D^n_{\bullet}\in {\tt DG\cD M}\;,$$ where $D^n_{\bullet}$ is a copy of the $n$-disc $$D^n_{\bullet}: \cdots \to 0\to 0\to \cD\cdot \mathbb{I}_{b_n} \to \cD\cdot s^{-1}\mathbb{I}_{b_n}\to 0\to \cdots\to 0\;.$$ Since $$P_n(B)=\bigoplus_{b_{n+1}\in B_{n+1}}\cD\cdot s^{-1}\mathbb{I}_{b_{n+1}}\oplus \bigoplus_{b_n\in B_n}\cD\cdot \mathbb{I}_{b_n}\;\; (n>0)\quad\text{and}\quad P_0(B)=\bigoplus_{b_1\in B_1}\cD\cdot s^{-1}\mathbb{I}_{b_1}\;,$$ the free non-negatively graded $\cD$-module $P(B)$ is projective in each degree, what justifies the chosen notation. On the other hand, the differential $d_P$ of $P(B)$ is the degree $-1$ square 0 $\cD$-linear map induced by the differentials in the $n$-discs and thus defined on $P_n(B)$ by $$d_P(s^{-1}\mathbb{I}_{b_{n+1}})=0\in P_{n-1}(B)\quad\text{and}\quad d_P(\mathbb{I}_{b_n})=s^{-1}\mathbb{I}_{b_n}\in P_{n-1}(B)\;$$ (see Lemma \ref{DiffGen}). The canonical projection $\ze:P(B)\to B\,$, is defined on $P_n(B)$, as degree 0 $\cD$-linear map, by $$\ze(s^{-1}\mathbb{I}_{b_{n+1}})=d_B(b_{n+1})\in B_n\quad\text{and}\quad\ze(\mathbb{I}_{b_n})=b_n\in B_n\;.$$ It is clearly a $\tt DG\cD M$-morphism and extends to a $\tt DG\cD A$-morphism $\ze:\cS(P(B))\to B$ (see Lemma \ref{MorpGen}).\medskip

We define now the aforementioned $\tt DG\cD A$-morphisms $i:A\to A\0 \cS(P(B))$ and $p:A\0 \cS(P(B))\to B$, where $i$ is a weak equivalence and $p$ a fibration such that $p\circ i=\zf\,.$ We set $i=\id_A\0 1$ and $p=\zm_B\circ (\zf\0 \ze)\,.$ It is readily checked that $i$ and $p$ are $\tt DG\cD A$-morphisms (see Proposition \ref{ProdTensProd}) with composite $p\circ i=\zf\,.$ Moreover, by definition, $p$ is a fibration in $\tt DG\cD A$, if it is surjective in degrees $n>0$ -- what immediately follows from the fact that $\ze$ is surjective in these degrees.\medskip

It thus suffices to show that $i$ is a weak equivalence in $\tt DG\cD A$, i.e., that $$H(i):H(A)\ni [a]\to [a\0 1]\in H\left(A\0{\cal S}(P(B))\right)$$ is an isomorphism of graded $\cD$-modules. Since $\tilde \imath:A\to A\0\cO$ is an isomorphism in $\tt DG\cD M$, it induces an isomorphism $$H(\tilde \imath):H(A)\ni[a]\to [a\0 1]\in H(A\0\cO)\;.$$ In view of the graded $\cD$-module isomorphism $$H(A\0{\cal S}(P(B)))\simeq H( A\0\cO)\oplus H(A\0{\cal S}^{\ast\ge 1}(P(B)))\;,$$ we just have to prove that \be H(A\0{\cal S}^{k\ge 1}(P(B)))=0\;\label{SCond1}\ee as graded $\cD$-module, or, equivalently, as graded $\cO$-module.\medskip

To that end, note that $$0\longrightarrow \ker^{k}{\frak S}\stackrel{\iota}{\longrightarrow}P(B)^{\0 k}\stackrel{\frak S}{\longrightarrow}(P(B)^{\0 k})^{\mathbb{S}_k}\longrightarrow 0\;,$$ where $k\ge 1$ and where $\frak S$ is the averaging map, is a short exact sequence in the abelian category $\tt DG\cO M$ of differential non-negatively graded $\cO$-modules (see Appendix \ref{InvCoinv}, in particular Equation (\ref{SymOp})). Since it is canonically split by the injection $${\frak I}:(P(B)^{\0 k})^{\mathbb{S}_k}\to P(B)^{\0 k}\;,$$ and $$(P(B)^{\0 k})^{\mathbb{S}_k}\simeq {\cal S}^{k}(P(B))$$ as {\small DG} $\cO$-modules (see Equation (\ref{Alg2})), we get $$P(B)^{\0 k}\simeq \cS^k(P(B))\oplus \ker^k{\frak S}\quad\text{and}\quad A\0 P(B)^{\0 k}\simeq A\0\cS^k(P(B))\,\oplus\, A\0\ker ^k{\frak S}\;,$$ as {\small DG} $\cO$-modules. Therefore, it suffices to show that the {\small LHS} is an acyclic chain complex of $\cO$-modules.\medskip

We begin showing that $\cD=\cD_X(X)$, where $X$ is a smooth affine algebraic variety, is a flat module over $\cO=\cO_X(X)$. Note first that, the equivalence (\ref{ShVsSectqcMod1}) $$\zG(X,\bullet):{\tt qcMod}(\cO_X)\rightleftarrows {\tt Mod}(\cO):\widetilde{\bullet}$$ is exact and strong monoidal (see remark below Equation (\ref{ShVsSectqcMod1})). Second, observe that $\cD_X$ is a locally free $\cO_X$-module, hence, a flat (and quasi-coherent) sheaf of $\cO_X$-modules, i.e., $\cD_X\0_{\cO_X}\bullet\,$ is exact in ${\tt Mod}(\cO_X)$. To show that $\cD\0_\cO\bullet$ is exact in ${\tt Mod}(\cO)$, consider an exact sequence $$0\to M'\to M\to M''\to 0$$ in ${\tt Mod}(\cO)$. From what has been said it follows that $$0\to \cD_X\0_{\cO_X}\widetilde{M'}\to \cD_X\0_{\cO_X}\widetilde{M}\to \cD_X\0_{\cO_X}\widetilde{M''}\to 0$$ is an exact sequence in ${\tt Mod}(\cO_X)$, as well as an exact sequence in ${\tt qcMod}(\cO_X)$ (kernels and cokernels of morphisms of quasi-coherent modules are known to be quasi-coherent). When applying the exact and strong monoidal global section functor, we see that $$0\to\cD\0_\cO M'\to \cD\0_\cO M\to \cD\0_\cO M''\to 0$$ is exact in ${\tt Mod}(\cO)$.\medskip

Next, observe that
$$
H(A\0 P(B)^{\0 k})=\bigoplus_{n>0}\bigoplus _{b_n\in B_n} H(D^n_\bullet\0 A \otimes P(B)^{\otimes (k-1)})\;.
$$
To prove that each of the summands of the {\small RHS} vanishes, we apply K\"unneth's Theorem \cite[Theorem 3.6.3]{Wei93} to the complexes $D_\bullet^n$ and $A \otimes P(B)^{\otimes (k-1)}$, noticing that both, {the $n$-disc $D^n_\bullet$ (which vanishes, except in degrees $n,n-1$, where it coincides with $\cD$) and its boundary $d(D^n_\bullet)$} (which vanishes, except in degree $n-1$, where it coincides with $\cD$), are termwise flat $\mathcal{O}$-modules. We thus get, for any $m$, a short exact sequence
\begin{align}\nonumber
0\rightarrow \bigoplus_{p+q=m}H_p(D^n_\bullet)\otimes H_q(A \otimes P(B)^{\otimes (k-1)})&\rightarrow H_m(D_\bullet^n\0 A \otimes P(B)^{\otimes (k-1)})\rightarrow\\ \nonumber
&\bigoplus_{p+q=m-1} \op{Tor}_1(H_p(D_\bullet^n), H_q(A \otimes P(B)^{\otimes (k-1)}))\rightarrow 0\;.
\end{align}
Finally, since $D^n_\bullet$ is acyclic, the central term of this exact sequence vanishes, since both, the first and the third, do.\medskip

To completely finish checking the requirements of Lemma \ref{SuffCondFor2} and thus of Theorem \ref{QTT}, we still have to prove that the factorization $(i,p)=(i(\zf),p(\zf))$ of $\zf$ is functorial. In other words, we must show that, for any commutative $\tt DG\cD A$-square \be\label{InitSq}
\xymatrix{
A\ar[d]^{u} \ar[r]^{\zf}&B\ar[d]^{v\;\;\;,}\\
A'\ar[r]^{\zf'}&B'\\
}
\ee
there is a commutative $\tt DG\cD A$-diagram
\be\label{CompMor00}
\xymatrix{A\;\; \ar[d]_{u} \ar^{\sim}_{i(\zf)}  @{->} [r] & A\0\cS U \ar[d]^{w}\;\;\ar @{->>} [r]_{p(\zf)} & B \ar[d]^{v\;\;\;,}\\
A'\;\; \ar @{->} [r]^{\sim}_{i(\zf')} & A'\0\cS U'\;\; \ar @{->>} [r]_{p(\zf')} & B'\\
}
\ee
where we wrote $U$ (resp., $U'$) instead of $P(B)$ (resp., $P(B')$).

To construct the $\tt DG\cD A$-morphism $w$, we first define a $\tt DG\cD A$-morphism $\tilde{v}:\cS U\to \cS U'$, then we obtain the $\tt DG\cD A$-morphism $w$ by setting $w=u\0 \tilde{v}$.

To get the $\tt DG\cD A$-morphism $\tilde{v}$, it suffices, in view of Lemma \ref{MorpGen}, to define a degree 0 $\tt Set$-map $\tilde{v}$ on $G:=\{s^{-1}\mbi_{b_n},\mbi_{b_n}:b_n\in B_n,n>0\}$, with values in the differential graded $\cD$-algebra $(\cS U',d_{U'})$, which satisfies $d_{U'}\,\tilde{v}=\tilde{v}\,d_U$ on $G$. We set $$\tilde{v}(s^{-1}\mbi_{b_n})=s^{-1}\mbi_{v(b_n)}\in\cS U'\;\,\text{and}\;\,\tilde{v}(\mbi_{b_n})=\mbi_{v(b_n)}\in\cS U'\;,$$ and easily see that all the required properties hold.

We still have to verify that the diagram (\ref{CompMor00}) actually commutes. Commutativity of the left square is obvious. As for the right square, let $t:={a}\0 x_1\odot\ldots\odot x_k\in A\0\cS U$, where the $x_i$ are elements of $U$, and note that $$v\, p(\zf)(t)= v\, (\zm_B\circ (\zf\0 \ze))(t)=v\,\zf({a})\star v\,\ze(x_1)\star\ldots\star v\,\ze(x_k)$$ and $$p(\zf')w(t)=(\zm_{B'}\circ(\zf'\0\ze'))(u({a})\0 \tilde{v}(x_1)\odot\ldots\odot \tilde{v}(x_k))$$ $$=\zf'u({a})\star\,\ze'\, \tilde{v}(x_1)\,\star\,\ldots\,\star\, \ze'\, \tilde{v}(x_k)\;,$$ where $\star$ denotes the multiplication in $B'$. Since the square (\ref{InitSq}) commutes, it suffices to check that \be\label{ComRel}v\,\ze(x)=\ze'\,\tilde{v}(x)\;,\ee for any $x\in U\,.$ However, the $\cD$-module $U$ is freely generated by $G$ and the four involved morphisms are $\cD$-linear: it is enough that (\ref{ComRel}) holds on $G$ -- what is actually the case.

\subsection{Transferred model structure}

We proved in Theorem \ref{FinGenModDGDM} that $\tt DG\cD M$ is a finitely generated model category whose set of generating cofibrations (resp., trivial cofibrations) is \be\label{GenCof1} I=\{\iota_k: S^{k-1}_\bullet \to D^k_\bullet, k\ge 0\}\ee $(\,$resp., \be\label{GenTrivCof1}J=\{\zeta_k: 0 \to D^k_\bullet, k\ge 1\}\;)\;.\ee Theorem \ref{QTT} thus allows us to conclude that:

\begin{theo}\label{FinGenModDGDA} The category $\tt DG\mathcal{D}A$ of differential non-negatively graded commutative $\cD$-algebras is a finitely $(\,$and thus a cofibrantly$\,)$ generated model category $(\,$in the sense of \cite{GS} and in the sense of \cite{Hov}$\,)$, with $\cS I=\{\cS \iota_k:\iota_k\in I\}$ as its generating set of cofibrations and $\cS J=\{\cS \zeta_k: \zeta_k\in J\}$ as its generating set of trivial cofibrations. The weak equivalences are the $\tt DG\cD A$-morphisms that induce an isomorphism in homology. The fibrations are the $\tt DG\cD A$-morphisms that are surjective in all positive degrees $p>0$.\end{theo}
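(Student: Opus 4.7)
The plan is to apply Quillen's transfer theorem (Theorem \ref{QTT}) to the adjunction $\cS : \tt DG\cD M \rightleftarrows \tt DG\cD A : \op{For}$ from \eqref{Ad}, transferring along it the finitely generated model structure of Theorem \ref{FinGenModDGDM}. Essentially all the technical work has already been done in the preceding two subsections, so the proof becomes a matter of assembly plus one additional step to upgrade ``cofibrantly generated'' to ``finitely generated''.

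First I would verify the two hypotheses of Theorem \ref{QTT}. Hypothesis (1), that $\op{For}$ commutes with sequential colimits, is exactly Proposition \ref{Enrichment}. Hypothesis (2), that every map in $\tt DG\cD A$ with the LLP with respect to all fibrations is a weak equivalence, I would establish via Quillen's path object argument (Lemma \ref{SuffCondFor2}). Since the terminal object of $\tt DG\cD A$ is $(\{0\},0)$ and any map with target $\{0\}$ is vacuously surjective in strictly positive degrees, every object of $\tt DG\cD A$ is fibrant, so the identity functor provides a functorial fibrant replacement. A natural path object for $A\in\tt DG\cD A$ is then obtained by feeding the diagonal-type map $A\to A\0_\cO A$ into the functorial $(\text{\small weak eq.}\,,\,\text{\small fib.})$-factorization explicitly constructed in Subsection \ref{Condition2}; the naturality and the factorization properties of $(i,p)$ verified there give precisely the commutative triangle required by Lemma \ref{SuffCondFor2}.

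Next, Theorem \ref{QTT} yields at once a cofibrantly generated model structure on $\tt DG\cD A$ with $\cS I$ and $\cS J$ as generating (trivial) cofibrations. The characterization of weak equivalences and fibrations is immediate from the definitions in Theorem \ref{QTT}: a $\tt DG\cD A$-morphism $f$ is a weak equivalence (resp.\ a fibration) iff $\op{For}(f)$ is one in $\tt DG\cD M$, which by Theorem \ref{FinGenModDGDM} means $f$ induces an isomorphism in homology (resp.\ is surjective in every strictly positive degree).

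The remaining, and I expect only genuinely new, step is to upgrade cofibrant generation to finite generation, i.e.\ to show that the domains and codomains of $\cS I$ and $\cS J$ -- namely $\cO=\cS(0)$, $\cS(S^{k-1}_\bullet)$, $\cS(D^k_\bullet)$ -- are $\omega$-small in $\tt DG\cD A$ relative to all morphisms. This is not difficult: for $M\in\{0,S^{k-1}_\bullet,D^k_\bullet\}$ and any $\lambda$-sequence $A:\lambda\to\tt DG\cD A$, the adjunction \eqref{Adjoint}, combined with Proposition \ref{Enrichment} and the $\omega$-smallness of $M$ in $\tt DG\cD M$ recalled in the proof of Theorem \ref{FinGenModDGDM}, gives
\[
\h_{\tt DG\cD A}(\cS M,\colim_{\beta<\lambda} A_\beta)\simeq \h_{\tt DG\cD M}(M,\colim_{\beta<\lambda}\op{For}A_\beta)\simeq \colim_{\beta<\lambda}\h_{\tt DG\cD M}(M,\op{For}A_\beta)\simeq \colim_{\beta<\lambda}\h_{\tt DG\cD A}(\cS M,A_\beta).
\]
This establishes finite generation and hence, after reconciling the definitions recalled in Appendix \ref{ModCat}, cofibrant generation in the sense of both \cite{GS} and \cite{Hov}.
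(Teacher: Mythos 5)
Your proposal follows the paper's own proof essentially step for step: Condition~1 of Theorem~\ref{QTT} via Proposition~\ref{Enrichment}, Condition~2 via Lemma~\ref{SuffCondFor2} with the identity as fibrant replacement and the natural `weak equivalence -- fibration' factorization of Subsection~\ref{Condition2} as the source of path objects, and then the adjunction-plus-colimit computation to upgrade cofibrant to finite generation (your displayed chain of isomorphisms is exactly the one the paper gives after the theorem). The one genuine misstep is in the path-object step: the diagram required by Lemma~\ref{SuffCondFor2} factors the diagonal $\Delta:D\to D\times D$ into the \emph{categorical product}, which in $\tt DG\cD A$ is created by the forgetful functor (a right adjoint) from $\tt DG\cD M$, i.e., is the degreewise product with componentwise multiplication. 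The tensor product $A\0_\cO A$ is instead the \emph{coproduct} in $\tt DG\cD A$, and there is no natural algebra morphism $A\to A\0_\cO A$ deserving the name ``diagonal'' -- the assignment $a\mapsto a\0 a$ is not even additive, and the canonical maps $a\mapsto a\0 1$, $a\mapsto 1\0 a$ are the coproduct injections, not a diagonal. As written, the step would therefore fail. The repair is immediate, since the factorization of Subsection~\ref{Condition2} is constructed for an \emph{arbitrary} $\tt DG\cD A$-morphism: apply it to the true diagonal $A\to A\times A$. With that correction your argument coincides with the paper's.
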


The cofibrations will be described below.\medskip

{Quillen's transfer principle actually provides a \cite{GS}-cofibrantly-generated (hence, a \cite{Hov}-cofibrantly-generated) \cite{GS}-model structure on $\tt DG\cD A$ (hence, a \cite{Hov}-model structure, if we choose for instance the functorial factorizations given by the small object argument).} In fact, this model structure is finitely generated, i.e. the domains and codomains of the maps in $\cS I$ and $\cS J$ are $n$-small $\tt DG\cD A$-objects, $n\in\N$, relative to $\op{Cof}$. Indeed, these sources and targets are $\cS D^k_\bullet$ ($k\ge 1$), $\cS S^k_\bullet$ ($k\ge 0$), and $\cO$. We already observed (see Theorem \ref{FinGenModDGDM}) that $D^k_\bullet$ ($k\ge 1$), $S^k_\bullet$ ($k\ge 0$), and $0$ are $n$-small $\tt DG\cD M$-objects with respect to all $\tt DG\cD M$-morphisms. If $\frak S_\bullet$ denotes any of the latter chain complexes, this means that the covariant Hom functor $\op{Hom}_{\tt DG\cD M}({\frak S}_\bullet,-)$ commutes with all $\tt DG\cD M$-colimits $\colim_{\zb<\zl}M_{\zb,\bullet}$ for all limit ordinals $\zl$. It therefore follows from the adjointness property (\ref{Adjoint}) and the equation (\ref{EnrichmentEq}) that, for any $\tt DG\cD A$-colimit $\colim_{\zb<\zl}A_{\zb,\bullet}$, we have $$\h_{\tt DG\cD A}(\cS {\frak S}_\bullet,\colim_{\zb<\zl}A_{\zb,\bullet})\simeq \h_{\tt DG\cD M}({\frak S}_\bullet,\op{For}(\colim_{\zb<\zl}A_{\zb,\bullet}))=$$ $$\h_{\tt DG\cD M}({\frak S}_\bullet,\colim_{\zb<\zl}\op{For}(A_{\zb,\bullet}))=\colim_{\zb<\zl}\h_{\tt DG\cD M}({\frak S}_\bullet,\op{For}(A_{\zb,\bullet}))\simeq$$ $$\colim_{\zb<\zl}\h_{\tt DG\cD A}(\cS {\frak S}_\bullet,A_{\zb,\bullet})\;.$$

\section{Description of $\tt DG\cD A$-cofibrations}

\subsection{Preliminaries}

The next lemma allows us to define non-split {\small RS$\cD$A}-s, as well as ${\tt DG\cD A}$-morphisms from such an {\small RS$\cD$A} into another differential graded $\cD$-algebra.

\begin{lem}\label{LemRSA} Let $(T,d_T)\in\tt DG\cD A$, let $(g_j)_{j\in J}$ be a family of symbols of degree $n_j\in \N$, and let $V=\bigoplus_{j\in J}\cD\cdot g_j$ be the free non-negatively graded $\cD$-module with homogeneous basis $(g_j)_{j\in J}$.\smallskip

(i) To endow the graded $\cD$-algebra $T\0\cS V$ with a differential graded $\cD$-algebra structure $d$, it suffices to define \be\label{CondRSADiff}d g_j\in T_{n_j-1}\cap d_T^{-1}\{0\}\;,\ee to extend $d$ as $\cD$-linear map to $V$, and to equip $T\0\cS V$ with the differential $d$ given, for any $t\in T_p,\,v_1\in V_{n_1},\,\ldots,\,v_k\in V_{n_k}\,$, by \be\label{DefRSADiff}d({t}\0 v_1\odot\ldots\odot v_k)=$$ $$d_T({t})\0 v_1\odot\ldots\odot v_k+(-1)^p\sum_{\ell=1}^k(-1)^{n_\ell\sum_{j<\ell}n_j}({t}\ast d(v_\ell))\0v_1\odot\ldots\widehat{\ell}\ldots\odot v_k\;,\ee where $\ast$ is the multiplication in $T$. If $J$ is a well-ordered set, the natural map $$(T,d_T)\ni {t}\mapsto {t}\0 1_\cO\in (T\boxtimes\cS V,d)$$ is a {\small RS$\cD\!$A}.\smallskip

(ii) Moreover, if $(B,d_B)\in{\tt DG\cD A}$ and $p\in{\tt DG\cD A}(T,B)$, it suffices -- to define a morphism $q\in{\tt DG\cD A}(T\boxtimes\cS V,B)$ (where the differential graded $\cD$-algebra $(T\boxtimes\cS V,d)$ is constructed as described in (i)) -- to define \be\label{CondRSAMorph}q(g_j)\in B_{n_j}\cap d_B^{-1}\{p\,d(g_j)\}\;,\ee to extend $q$ as $\cD$-linear map to $V$, and to define $q$ on $T\0\cS V$ by \be\label{DefRSAMorph}q({t}\0 v_1\odot\ldots\odot v_k)=p({t})\star q(v_1)\star\ldots\star q(v_k)\;,\ee where $\star$ denotes the multiplication in $B$.\end{lem}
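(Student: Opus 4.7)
For part (i), the strategy is to build the differential $d$ in two stages. First, the assignment $g_j \mapsto dg_j \in T_{n_j-1}$ extends uniquely to a $\cD$-linear degree $-1$ map $d : V \to T$, since $V$ is the free graded $\cD$-module on $(g_j)_{j \in J}$ (Lemma \ref{DiffGen}); the hypothesis $d_T(dg_j) = 0$ together with $\cD$-linearity of $d_T$ and $d$ yields $d_T \circ d = 0$ on all of $V$. Second, we promote $d$ to a map on $T \otimes \cS V$ by the prescribed Leibniz-type formula (\ref{DefRSADiff}). The nontrivial verifications are then: \textbf{(a)} well-definedness with respect to the graded-symmetric structure of $\cS V$; \textbf{(b)} $\cD$-linearity; \textbf{(c)} the graded Leibniz rule for the multiplication $\star$ of (\ref{MultTensMod}); \textbf{(d)} $d \circ d = 0$.

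Step (a) is a Koszul sign check: exchanging two neighbouring factors $v_\ell, v_{\ell+1}$ produces the sign $(-1)^{n_\ell n_{\ell+1}}$ on both sides, because the factor $(-1)^{n_\ell \sum_{j<\ell} n_j}$ in (\ref{DefRSADiff}) is exactly the Koszul sign for moving $v_\ell$ to the front. Step (b) reduces, by induction on the order of $P \in \cD$, to the cases $P \in \cO$ and $P \in \Theta$, where it follows from the derivation property (\ref{TensDMod}) of vector fields on tensor products together with the $\cD$-linearity of $d_T$ and of $d$ restricted to $V$. Step (c) is built into the formula: expanding both sides of $d(x \star y)$ for $x = t \otimes v_1 \odot \cdots \odot v_k$ and $y = t' \otimes w_1 \odot \cdots \odot w_m$ via (\ref{MultTensMod}) and (\ref{DefRSADiff}) matches the graded Leibniz rule after routine sign shuffling. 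Since $d^2$ is then a degree $-2$ derivation by (c), step (d) reduces to $d^2 = 0$ on $T \otimes 1$ (i.e.\ $d_T^2 = 0$) and on $1 \otimes V$ (i.e.\ $d_T(dg_j) = 0$), both of which hold by hypothesis.

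The inclusion $t \mapsto t \otimes 1$ is a $\tt DG\cD A$-morphism by (\ref{DefRSADiff}) with $k = 0$, and choosing any well-ordering of $J$ one has $dg_j \in T_{n_j-1} \subset T \boxtimes \cS V_{<j}$, so the inclusion is a lowering RS$\cD$A in the sense of Definition \ref{RSullDAlg}. For part (ii), the construction of $q$ mirrors that of $d$: by Lemma \ref{MorpGen}, $g_j \mapsto q(g_j) \in B_{n_j}$ extends uniquely to a degree $0$ $\cD$-linear map $q : V \to B$, and the hypothesis $d_B\, q(g_j) = p\, dg_j$ combined with the $\cD$-linearity of $d_B, d, q, p$ upgrades to $d_B q = q d$ on $V$. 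The formula (\ref{DefRSAMorph}) then extends $q$ to $T \otimes \cS V$; well-definedness on $\cS V$ uses graded commutativity of $\star$ in $B$ (parallel to step (a)), multiplicativity uses that $p$ is multiplicative and that the signs in (\ref{MultTensMod}) cancel against reshuffling $p(t')$ past the $q(v_i)$, and $\cD$-linearity is checked exactly as for $d$. The chain-map property $d_B q = q d$ on $T \otimes \cS V$ is obtained by applying (\ref{DefRSADiff}) and (\ref{DefRSAMorph}) and reducing to $d_B q = q d$ on $V$ and $d_B p = p d_T$ on $T$.

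The principal obstacle is step (a) — the Koszul sign bookkeeping needed to ensure that (\ref{DefRSADiff}) actually descends from the tensor algebra to the graded symmetric algebra $\cS V$ — together with the analogous check in (ii); everything else is a routine consequence of the universal properties of $V$ as a free graded $\cD$-module and of $\cS V$ as the free graded-commutative $\cD$-algebra on $V$.
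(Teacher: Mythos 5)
Your proposal is correct and follows essentially the same route as the paper's (much terser) proof: well-definedness on $\cS V$ via graded symmetry and $\cO$-linearity of the right-hand side of (\ref{DefRSADiff}), the derivation property read off from the formula, $\cD$-linearity reduced to the action of vector fields, and $d^2=0$ deduced from Condition (\ref{CondRSADiff}), with part (ii) handled analogously. Your additional observation that $d^2$ is a degree $-2$ derivation, so that 2-nilpotency need only be checked on $T\0 1$ and $1\0 V$, is a clean way to make precise the paper's one-line claim that nilpotency follows from (\ref{CondRSADiff}).
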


The reader might consider that the definition of $d(t\0 f)$, $f\in\cO$, is not an edge case of Equation (\ref{DefRSADiff}); if so, it suffices to add the definition $d(t\0 f)=d_T(t)\0 f\,.$ Note also that Equation (\ref{DefRSADiff}) is the only possible one. Indeed, denote the multiplication in $T\0\cS V$ (see Equation (13)) by $\diamond$ and choose, to simplify, $k=2$. Then, if $d$ is any differential, which is compatible with the graded $\cD$-algebra structure of $T\0\cS V$, and which coincides with $d_T(t)\0 1_\cO\simeq d_T(t)$ on any $t\0 1_\cO\simeq t\in T$ (since $(T,d_T)\to (T\boxtimes\cS V,d)$ must be a $\tt DG\cD A$-morphism) and with $d(v)\0 1_\cO\simeq d(v)$ on any $1_T\0 v\simeq v\in V$ (since $d(v)\in T$), then we have necessarily
\bea & d(t\0 v_1\odot v_2)=\eea
\bea & d(t\0 1_\cO)\diamond (1_{T}\0 v_1)\diamond (1_{T}\0 v_2)+\\ &(-1)^p(t\0 1_\cO)\diamond d(1_{T}\0 v_1)\diamond (1_{T}\0 v_2)+\\ &(-1)^{p+n_1}(t\0 1_\cO)\diamond (1_{T}\0 v_1)\diamond d(1_{T}\0 v_2)=\eea

$$(d_T(t)\0 1_\cO)\diamond (1_{T}\0 v_1)\diamond (1_{T}\0 v_2)+$$ $$(-1)^p(t\0 1_\cO)\diamond (d(v_1)\0 1_\cO)\diamond (1_{T}\0 v_2)+$$ $$(-1)^{p+n_1}(t\0 1_\cO)\diamond (1_{T}\0 v_1)\diamond (d(v_2)\0 1_\cO)=$$

$$d_T(t)\0 v_1\odot v_2 + (-1)^p(t\ast d(v_1))\0 v_2+(-1)^{p+n_1n_2}(t\ast d(v_2))\0 v_1\;.$$
An analogous remark holds for Equation (\ref{DefRSAMorph}).

\begin{proof} It is easily checked that the {\small RHS} of Equation (\ref{DefRSADiff}) is graded symmetric in its arguments $v_i$ and $\cO$-linear with respect to all arguments. Hence, the map $d$ is a degree $-1$ $\cO$-linear map that is well-defined on $T\0\cS V$. To show that $d$ endows $T\0\cS V$ with a differential graded $\cD$-algebra structure, it remains to prove that $d$ squares to 0, is $\cD$-linear and is a graded derivation for $\diamond$. The last requirement follows immediately from the definition, for $\cD$-linearity it suffices to prove linearity with respect to the action of vector fields -- what is a straightforward verification, whereas 2-nilpotency  is a consequence of Condition (\ref{CondRSADiff}). The proof of (ii) is similar. \end{proof}

We are now prepared to give an example of a non-split {\small RS$\cD$A}.

\begin{ex}\label{Lem1}\emph{Consider the generating cofibrations $\iota_n:S^{n-1}\to D^n$, $n\ge 1$, and $\iota_0:0\to S^0$ of the model structure of $\tt DG\cD M$. The {\it pushouts} of the induced generating cofibrations $$\psi_n=\cS(\iota_n)\quad\text{and}\quad \psi_0=\cS(\iota_0)$$ of the transferred model structure on $\tt DG\cD A$ are important instances of {\small RS$\cD$A}-s -- see Figure 2 and Equations (\ref{kappa}), (\ref{RSA-d}), (\ref{i}), (\ref{Cond2}), and (\ref{j}).} \end{ex}

\begin{proof} We first consider a pushout diagram for $\psi:=\psi_n$, for $n\ge 1$: see Figure \ref{PDiag},
\begin{figure}[h]
\begin{center}
\begin{tikzpicture}
  \matrix (m) [matrix of math nodes, row sep=3em, column sep=3em]
    {  \cS(S^{n-1}) & (T,d_T)  \\
       \cS(D^n) & \\ };
 \path[->]
 (m-1-1) edge  node[above] {$\scriptstyle{\zf}$} (m-1-2);
  \path[->]
 (m-1-1) edge  node[left] {$\scriptstyle{\psi}$} (m-2-1);
\end{tikzpicture}
\end{center}
\caption{Pushout diagram}\label{PDiag}
\end{figure}
where $(T,d_T)\in \tt DG\mathcal{D}A$ and where $\phi:(\cS(S^{n-1}),0)\to(T,d_T)$ is a $\tt DG\cD A$-morphism. \medskip

In the following, the generator of $S^{n-1}$ (resp., the generators of $D^n$) will be denoted by $1_{n-1}$ (resp., by $\mathbb{I}_n$ and $s^{-1}\mathbb{I}_n$, where $s^{-1}$ is the desuspension operator).\medskip

Note that, since $\cS(S^{n-1})$ is the free {\small DG$\cD$A} over the {\small DG$\cD$M} $S^{n-1}$, the $\tt DG\cD A$-morphism $\phi$ is uniquely defined by the $\tt DG\mathcal{D}M$-morphism $\phi|_{S^{n-1}}: S^{n-1}\to \op{For}(T,d_T)$, where $\op{For}$ is the forgetful functor. Similarly, since $S^{n-1}$ is, as {\small G$\cD$M}, free over its generator $1_{n-1}$, the restriction $\phi|_{S^{n-1}}$ is, as $\tt G\cD M$-morphism, completely defined by its value $\phi(1_{n-1})\in T_{n-1}$. The map $\phi|_{S^{n-1}}$ is then a $\tt DG\cD M$-morphism if and only if we choose \be\label{kappa}\zk_{n-1}:=\phi(1_{n-1})\in\ker_{n-1}d_T\;.\ee

We now define the pushout of $(\psi,\phi)$: see Figure \ref{CPD}.
\begin{figure}[h]
\begin{center}
\begin{tikzpicture}
  \matrix (m) [matrix of math nodes, row sep=3em, column sep=3em]
    {  \cS(S^{n-1}) & (T,d_T)  \\
       \cS(D^n) & (T\boxtimes\cS(S^n),d)  \\ };
 \path[->]
 (m-1-2) edge  node[right] {$\scriptstyle{i}$} (m-2-2);
 \path[->]
 (m-1-1) edge  node[above] {$\scriptstyle{\zf}$} (m-1-2);
  \path[->]
 (m-1-1) edge  node[left] {$\scriptstyle{\psi}$} (m-2-1);
  \path[->]
 (m-2-1) edge  node[above] {$\scriptstyle{j}$} (m-2-2);
\end{tikzpicture}
\caption{Completed pushout diagram}\label{CPD}
\end{center}
\end{figure}
\noindent In the latter diagram, the differential $d$ of the {\small G$\cD$A} $T\boxtimes\cS(S^n)$ is defined as described in Lemma \ref{LemRSA}. Indeed, we deal here with the free non-negatively graded $\cD$-module $S^n=S^n_n=\cD\cdot 1_n$ and set $$d(1_n):=\zk_{n-1}=\zf(1_{n-1})\in\ker_{n-1}d_T\;.$$ Hence, if $x_\ell\cdot 1_n\in\cD\cdot 1_n$ (to simplify notation we denote in the following by $x_\ell$ both, the differential operator $x_\ell\in\cD$ and the element $x_\ell\cdot 1_n\in S^n$), we get $d(x_\ell)=x_\ell\cdot\zk_{n-1}$, and, if $t\in T_p$, we obtain \be\label{RSA-d}d({t}\0 x_1\odot\ldots\odot x_k)=$$ $$d_T({t})\0 x_1\odot\ldots\odot x_k+(-1)^p\sum_{\ell=1}^k(-1)^{n(\ell-1)}({t}\ast (x_\ell\cdot\zk_{n-1}))\0 x_1\odot\ldots\widehat{\ell}\ldots\odot x_k\;,\ee see Equation (\ref{DefRSADiff}). {Finally} the map \be\label{i}i:(T,d_T)\ni t\mapsto t\0 1_\cO\in (T\boxtimes\cS(S^n),d)\ee is a (non-split) {\small RS$\cD$A}{, see Definition \ref{RSullDAlg}}.\medskip

Just as $\phi$, the $\tt DG\cD A$-morphism $j$ is completely defined if we define it as $\tt DG\cD M$-morphism on $D^n$. The choices of $j(\mathbb{I}_n)$ and $j(s^{-1}\mathbb{I}_{n})$ define $j$ as $\tt G\cD M$-morphism. The commutation condition of $j$ with the differentials reads \be\label{Cond1}j(s^{-1}\mathbb{I}_n)=d\,j(\mathbb{I}_n)\;:\ee only $j(\mathbb{I}_n)$ can be chosen freely in $(T\0\cS(S^n))_n\,$.\medskip

The diagram of Figure \ref{CPD} is now fully described. To show that it commutes, observe that, since the involved maps $\phi,i,\psi$, and $j$ are all $\tt DG\cD A$-morphisms, it suffices to check commutation for the arguments $1_\cO$ and $1_{n-1}$. Since differential graded $\cD$-algebras are systematically assumed to be unital, only the second case is non-obvious. We get the condition \be\label{Cond2}d\,j(\mathbb{I}_n)=\zk_{n-1}\0 1_\cO\;.\ee It now suffices to set \be\label{j}j(\mathbb{I}_n)=1_T\0 1_n\in(T\0\cS(S^{n}))_n\;.\ee

To prove that the commuting diagram of Figure \ref{CPD} is the {searched for pushout}, it now suffices to prove its universality. Therefore, take $(B,d_B)\in\tt DG\cD A$, as well as two $\tt DG\cD A$-morphisms $i':(T,d_T)\to (B,d_B)$ and $j':\cS(D^n)\to(B,d_B)$, such that $j'\circ \psi=i'\circ\phi$, and show that there is a unique $\tt DG\mathcal{D}A$-morphism $\chi:(T\boxtimes \cS(S^n),d)\to (B,d_B)$, such that $\chi\circ i=i'$ and $\chi\circ j=j'$.\medskip

If $\chi$ exists, we have necessarily $$\chi(t\0 x_1\odot\ldots\odot x_k)=\chi((t\0 1_\cO)\diamond (1_T\0 x_1)\diamond \ldots\diamond (1_T\0 x_k))$$ \be\label{UP1}=\chi(i(t))\star \chi(1_T\0 x_1)\star\ldots\star \chi(1_T\0 x_k)\;,\ee where we used the same notation as above. Since any differential operator is generated by functions and vector fields, we get \be\label{UP2}\chi(1_T\0 x_i)=\chi(1_T\0 x_i\cdot 1_n)=x_i\cdot \chi(1_T\0 1_n)=x_i\cdot \chi(j(\mathbb{I}_n))=x_i\cdot j'(\mathbb{I}_n)=j'(x_i\cdot\mathbb{I}_n)\;.\ee When combining (\ref{UP1}) and (\ref{UP2}), we see that, if $\chi$ exists, it is necessarily defined by \be\label{UP3}\chi(t\0 x_1\odot\ldots\odot x_k)=i'(t)\star j'(x_1\cdot\mathbb{I}_n)\star\ldots\star j'(x_k\cdot\mathbb{I}_n)\;.\ee This solves the question of uniqueness.\medskip

We now convince ourselves that (\ref{UP3}) defines a $\tt DG\cD A$-morphism $\chi$ (let us mention explicitly that we set in particular $\chi(t\0 f)=f\cdot i'(t)$, if $f\in\cO$). It is straightforwardly verified that $\chi$ is a well-defined $\cD$-linear map of degree 0 from $T\0\cS(S^n)$ to $B$, which respects the multiplications and the units. The interesting point is the chain map property of $\chi$. Indeed, consider, to simplify, the argument $t\0 x$, what will disclose all relevant insights. Assume again that $t\in T_p$ and $x\in S^n$, and denote the differential of $\cS(D^n)$, just as its restriction to $D^n$, by $s^{-1}$. It follows that $$d_B(\chi(t\0 x))=i'(d_T(t))\star j'(x\cdot\mathbb{I}_n)+(-1)^{p}\,i'(t)\star j'(x\cdot s^{-1}\mathbb{I}_n)\;.$$ Since $\psi(1_{n-1})=s^{-1}\mathbb{I}_n$ and $j'\circ\psi=i'\circ\phi$, we obtain $j'(s^{-1}\mathbb{I}_n)=i'(\zf(1_{n-1}))=i'(\zk_{n-1})$. Hence, $$d_B(\chi(t\0 x))=\chi(d_T(t)\0 x)+(-1)^{p}\,i'(t)\star i'(x\cdot\zk_{n-1})=$$ $$\chi(d_T(t)\0 x+(-1)^{p}t\ast(x\cdot\zk_{n-1}))=\chi(d(t\0 x))\;.$$ As afore-mentioned, no new feature appears, if we replace $t\0 x$ by a general argument.\medskip

As the conditions $\chi\circ i=i'$ and $\chi\circ j=j'$ are easily checked, this completes the proof of the statement that any pushout of any $\psi_n$, $n\ge 1$, is {a {\small RS$\cD$A}}.\medskip

The proof of the similar claim for $\psi_0$ is analogous and even simpler, and will not be detailed here.\end{proof}

Actually pushouts of $\psi_0$ are border cases of pushouts of the $\psi_n$-s, $n\ge 1$. In other words, to obtain a pushout of $\psi_0$, it suffices to set, in Figure \ref{CPD} and in Equation (\ref{RSA-d}), the degree $n$ to 0. Since we consider exclusively non-negatively graded complexes, we then get $\cS(S^{-1})=\cS(0)=\cO$, $\cS(D^0)=\cS(S^0)$, and $\zk_{-1}=0$.

\subsection{$\tt DG\cD A$-cofibrations}

The following theorem characterizes the cofibrations of the cofibrantly generated model structure we constructed on $\tt DG\mathcal{D}A$.

\begin{theo}\label{Cof} The $\tt DG\mathcal{D}A$-cofibrations are exactly the retracts of the relative Sullivan $\cD$-algebras. \end{theo}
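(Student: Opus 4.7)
The plan is to use the standard characterization of cofibrations in a cofibrantly generated model category: since $\tt DG\cD A$ is cofibrantly generated with generating cofibrations $\cS I=\{\psi_n=\cS\iota_n\}$, a map is a cofibration if and only if it is a retract of a relative $\cS I$-cell complex, i.e.\ of a transfinite composition of pushouts of maps in $\cS I$. Given this, it suffices to show that the class of relative $\cS I$-cell complexes starting from some $(A,d_A)\in\tt DG\cD A$ coincides with the class of relative Sullivan $\cD$-algebras over $(A,d_A)$; the theorem then follows since $\op{Cof}$ is closed under retracts.

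First I would show every \emph{RS$\cD$A} is a relative $\cS I$-cell complex, hence a cofibration. Let $(A,d_A)\to (A\boxtimes \cS V,d)$ be an \emph{RS$\cD$A} with well-ordered basis $(v_\alpha)_{\alpha\in J}$ of degrees $n_\alpha$ and $dv_\alpha\in A\boxtimes \cS V_{<\alpha}$. I would define, by transfinite induction, $T_{<\alpha}:=A\boxtimes \cS V_{<\alpha}$ with the restricted differential; the lowering condition together with $d^2=0$ forces $dv_\alpha$ to be a cycle of degree $n_\alpha-1$ in $T_{<\alpha}$. By the Example (pushout of $\psi_{n_\alpha}$), $T_{\le\alpha}$ is the pushout of $\psi_{n_\alpha}:\cS(S^{n_\alpha-1})\to \cS(D^{n_\alpha})$ along the $\tt DG\cD A$-morphism $\cS(S^{n_\alpha-1})\to T_{<\alpha}$ determined by $1_{n_\alpha-1}\mapsto dv_\alpha$. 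At limit stages, $T_{<\alpha}=\colim_{\beta<\alpha}T_{\le\beta}$ (colimits in $\tt DG\cD A$ are created by $\op{For}$, cf.\ Proposition \ref{Enrichment}, and $\cS$ as a left adjoint preserves them). Thus the \emph{RS$\cD$A} is exhibited as an $\cS I$-cell attachment, and retracts of \emph{RS$\cD$A}-s are cofibrations.

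Conversely, I would show every relative $\cS I$-cell complex is an \emph{RS$\cD$A}. A relative $\cS I$-cell complex on $(A,d_A)$ is a transfinite composition $(A,d_A)=Y_0\to Y_1\to\cdots$ in which each $Y_\alpha\to Y_{\alpha+1}$ is a pushout along a coproduct $\coprod_{k\in K_\alpha}\psi_{n_k}$. Since $\cS$ commutes with coproducts and coproducts of pushouts can be refined into a well-ordered sequence of single pushouts of generating cofibrations, I may assume each stage attaches exactly one generator. By the Example, this single-generator pushout is precisely the Sullivan-type extension $Y_\alpha\to Y_\alpha\boxtimes \cS(S^{n_\alpha})$ with the new generator $v_\alpha$ satisfying $dv_\alpha\in Y_\alpha$. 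Concatenating the (arbitrarily chosen) well-orderings on the $K_\alpha$ in the order dictated by the ordinal $\alpha$ yields a global well-order on the disjoint union $\bigsqcup_\alpha K_\alpha$ for which the lowering condition holds by construction. Hence the cell complex has the form $(A,d_A)\to (A\boxtimes \cS V,d)$ of an \emph{RS$\cD$A}, and every cofibration -- being a retract of such a cell complex -- is a retract of an \emph{RS$\cD$A}.

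The main obstacle is the transfinite bookkeeping at limit ordinals: one must check that the colimit of the nested tensor products $A\boxtimes \cS V_{<\alpha}$ computed in $\tt DG\cD A$ is indeed $A\boxtimes\cS V_{<\alpha}$ with the natural concatenated differential, and that the concatenated well-ordering on the union of generators preserves the lowering condition. This reduces to the fact that the forgetful functor creates sequential (more generally, directed) colimits (Proposition \ref{Enrichment}), that $\cS$ is a left adjoint and hence preserves colimits, and that the differential at each limit stage is uniquely determined on the generators introduced at strictly lower stages.
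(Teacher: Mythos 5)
Your proposal is correct and follows essentially the same route as the paper: both reduce the statement to the identification of relative $\cS I$-cell complexes with relative Sullivan $\cD$-algebras (the paper's Theorem \ref{Reduction}) and prove the two inclusions by transfinite induction on the attaching stages, using the explicit pushout computation of $\psi_n$ along a cycle-valued map (Example \ref{Lem1}) together with $\cS(M\oplus N)\simeq \cS M\otimes \cS N$ and the creation of directed colimits by the forgetful functor. The only cosmetic difference is that you allow coproduct-indexed attachments at each stage and then refine them into single pushouts, whereas the paper works with single pushouts from the outset.
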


Since the $\tt DG\mathcal{D}A$-cofibrations are exactly the retracts of the transfinite compositions of pushouts of generating cofibrations $$\psi_n:\cS(S^{n-1})\to\cS(D^n),\quad n\ge 0\;,$$ the proof of Theorem \ref{Cof} reduces to the proof of

\begin{theo}\label{Reduction} The transfinite compositions of pushouts of $\psi_n$-s, $n\ge 0$, are exactly the relative Sullivan $\cD$-algebras.\end{theo}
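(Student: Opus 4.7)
The plan is to prove both inclusions by reading Example \ref{Lem1} as a dictionary: each pushout of a generating cofibration $\psi_n$ adjoins exactly one new generator of degree $n$, with differential an arbitrary cycle in the previous stage, and this is precisely the successor step in the transfinite construction of a relative Sullivan $\cD$-algebra.

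For the direction that any transfinite composition of pushouts of $\psi_n$-s is an {\small RS$\cD$A}, I will proceed by transfinite induction on the length $\lambda$ of the composition starting at $(A,d_A)$. The base case $\lambda=0$ is trivial. For the successor case, assume the $\alpha$-th stage $(A,d_A)\to (T_\alpha,d_\alpha)$ is an {\small RS$\cD$A} with well-ordered set of generators $J_\alpha$ and $T_\alpha=A\boxtimes \cS V_{<\alpha}$. Taking a pushout of $\psi_n$ along any $\phi:\cS(S^{n-1})\to T_\alpha$ produces, by Example \ref{Lem1}, a {\small DG$\cD$A} of the form $(T_\alpha\boxtimes \cS(\cD\cdot v),d)$ with $v$ a single new generator of degree $n$ and $dv=\phi(1_{n-1})\in \ker_{n-1}d_\alpha\subset T_\alpha=A\boxtimes \cS V_{<\alpha}$. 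Appending $v$ to $J_\alpha$ as the new maximum yields a well-ordered set $J_{\alpha+1}$ on which $d$ is visibly lowering. For limit ordinals, I take the colimit of the preceding stages in $\tt DG\cD A$; the generators and the lowering differential transfer to the colimit, and the well-ordering on the union is inherited.

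For the converse, I will express any given {\small RS$\cD$A} $(A,d_A)\to (A\boxtimes \cS V,d)$ with $V=\bigoplus_{\alpha\in J}\cD\cdot v_\alpha$ as such a transfinite composition. Since $J$ is well-ordered it is order-isomorphic to an ordinal $\lambda$, and I set $T_\alpha:=A\boxtimes \cS V_{<\alpha}$ equipped with the restriction of $d$, which makes sense because the lowering condition (\ref{Lowering}) guarantees $d(V_{<\alpha})\subset T_\alpha$ and hence $d(T_\alpha)\subset T_\alpha$ by the derivation property. At a successor $\alpha+1$, the element $dv_\alpha\in T_\alpha$ is a cycle (as $d^2=0$), so defining $\phi_\alpha:\cS(S^{n_\alpha-1})\to T_\alpha$ by $\phi_\alpha(1_{n_\alpha-1}):=dv_\alpha$ (where $n_\alpha=\deg v_\alpha$) and taking the pushout of $\psi_{n_\alpha}$ along $\phi_\alpha$ reconstructs $T_{\alpha+1}=T_\alpha\boxtimes\cS(\cD\cdot v_\alpha)$ with the correct differential, by the explicit formula in Example \ref{Lem1}; the case $n_\alpha=0$ is the border case recorded after that example. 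At a limit ordinal $\alpha$, I set $T_\alpha:=\colim_{\beta<\alpha}T_\beta$ in $\tt DG\cD A$, and then verify this agrees with $A\boxtimes \cS V_{<\alpha}$ together with the restricted differential.

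The main obstacle will be the last verification at limit ordinals. Using Proposition \ref{Enrichment}, colimits in $\tt DG\cD A$ along directed diagrams are computed in $\tt DG\cD M$, and in turn degreewise in ${\tt Mod}(\cD)$. The identification then reduces to the purely algebraic fact that every element of $\cS V_{<\alpha}$ is a sum of symmetric products $v_{\beta_1}\odot\cdots\odot v_{\beta_k}$ using only finitely many generators, hence lies in $\cS V_{<\beta}$ for $\beta:=\max(\beta_i)+1<\alpha$; thus $\cS V_{<\alpha}=\bigcup_{\beta<\alpha}\cS V_{<\beta}$, and the compatibility of the differentials on the stages gives the equality of chain complexes. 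Combined with the two inductive constructions, this establishes the stated equivalence.
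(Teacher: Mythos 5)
Your proof is correct and follows essentially the same route as the paper's: both directions proceed by transfinite induction, with Example \ref{Lem1} serving as the dictionary identifying each successor step with the adjunction of a single generator whose differential is a prescribed cycle in the previous stage, and with the colimits at limit stages identified as unions of the earlier stages. The only ingredient you leave implicit is the isomorphism $\cS(M\oplus N)\simeq \cS M\otimes\cS N$ (the paper's Lemma \ref{DirSumTenPro}), which is what lets you regard $T_\alpha\boxtimes\cS(\cD\cdot v)$ as $A\boxtimes\cS(V_{<\alpha}\oplus\cD\cdot v)$ and hence ``append $v$ to $J_\alpha$''.
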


\begin{lem}\label{DirSumTenPro} For any $M,N\in\tt DG\cD M$, we have $$\cS(M\oplus N)\simeq \cS M\0 \cS N\;$$ in $\tt DG\cD A\,.$\end{lem}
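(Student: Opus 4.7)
The plan is to identify both sides with the coproduct of $\cS M$ and $\cS N$ in $\tt DG\cD A$, and then invoke uniqueness of coproducts.

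First, I would recall that $M\oplus N$ is the coproduct of $M$ and $N$ in $\tt DG\cD M$, with the canonical injections $\iota_M:M\to M\oplus N$ and $\iota_N:N\to M\oplus N$. Since $\cS$ is the left adjoint in the adjunction (\ref{Adj}), it preserves all colimits; in particular it preserves coproducts. Hence $\cS(M\oplus N)$, equipped with the induced maps $\cS(\iota_M):\cS M\to \cS(M\oplus N)$ and $\cS(\iota_N):\cS N\to \cS(M\oplus N)$, is a coproduct of $\cS M$ and $\cS N$ in $\tt DG\cD A$.

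Second, I would show that $\cS M\0_\cO \cS N$, together with the $\tt DG\cD A$-morphisms
\[
j_M:\cS M\ni a\mapsto a\0 1\in \cS M\0_\cO\cS N\quad\text{and}\quad j_N:\cS N\ni b\mapsto 1\0 b\in \cS M\0_\cO\cS N\;,
\]
is also a coproduct of $\cS M$ and $\cS N$ in $\tt DG\cD A$. Given $(A,d_A)\in\tt DG\cD A$ with $\tt DG\cD A$-morphisms $\zf:\cS M\to A$ and $\psi:\cS N\to A$, Proposition~\ref{ProdTensProd} supplies a $\tt DG\cD A$-morphism $\chi:\cS M\0_\cO \cS N\to A$, $\chi(a\0 b)=\zf(a)\star_A\psi(b)$, satisfying $\chi\circ j_M=\zf$ and $\chi\circ j_N=\psi$. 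Uniqueness is automatic: any $\tt DG\cD A$-morphism extending $\zf$ and $\psi$ must agree with $\chi$ on elements of the form $j_M(a)\star j_N(b)=a\0 b$, and such elements generate $\cS M\0_\cO \cS N$ as a $\cD$-algebra.

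Third, the universal property yields a unique $\tt DG\cD A$-isomorphism
\[
\cS(M\oplus N)\isoto \cS M\0_\cO \cS N\;,
\]
characterized by sending $\cS(\iota_M)(m)\mapsto m\0 1$ and $\cS(\iota_N)(n)\mapsto 1\0 n$, or equivalently by extending the $\tt DG\cD M$-morphism $M\oplus N\ni (m,n)\mapsto m\0 1+1\0 n\in \cS M\0_\cO \cS N$ via the free $\cD$-algebra property (Remark \ref{FreeDGDA}).

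There is no real obstacle here; the only point deserving a brief verification is the uniqueness clause in the second step, which relies on the fact that $\cS M\0_\cO \cS N$ is generated as a $\cD$-algebra by the images of $j_M$ and $j_N$ together with the graded-commutative multiplication rule of (\ref{MultTensMod}). All the differential, grading and $\cD$-linearity compatibilities are inherited automatically because both constructions are performed inside $\tt DG\cD A$.
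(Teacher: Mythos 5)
Your proof is correct and takes essentially the same route as the paper's: both identify $\oplus$ and $\0$ as the binary coproducts in $\tt DG\cD M$ and $\tt DG\cD A$ respectively, and conclude via the fact that the left adjoint $\cS$ preserves coproducts. The only difference is that you explicitly verify the coproduct property of $\cS M\0\cS N$ using Proposition \ref{ProdTensProd}, a point the paper simply takes as known.
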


\begin{proof} 
It
suffices to remember that the binary coproduct in the category $\tt DG\cD M=Ch_+(\cD)$ (resp., the category $\tt DG\cD A=CMon(DG\cD M)$) of non-negatively graded chain complexes of $\cD$-modules (resp., of commutative monoids in $\tt DG\cD M$) is the direct sum (resp., the tensor product). The conclusion then follows from the facts that $\cS$ is the left adjoint of the forgetful functor and that any left adjoint commutes with colimits.\end{proof}

Any ordinal is zero, a successor ordinal, or a limit ordinal. We denote the class of all successor ordinals (resp., all limit ordinals) by $\frak{O}_s$ (resp., $\frak{O}_\ell$).

\begin{proof}[Proof of Theorem \ref{Reduction}] (i) Consider an ordinal $\zl$ and a $\zl$-sequence in $\tt DG\cD A$, i.e., a colimit respecting functor $X:\zl\to \tt DG\cD A$ (here $\zl$ is viewed as the category whose objects are the ordinals $\za<\zl$ and which contains a unique morphism $\za\to\zb$ if and only if $\za\le\zb$): $$X_0\to X_1\to \ldots \to X_n\to X_{n+1}\to\ldots X_\zw\to X_{\zw+1}\to \ldots \to X_\za\to X_{\za+1}\to \ldots$$ We assume that, for any $\za$ such that $\za+1<\zl$, the morphism $X_\za\to X_{\za+1}$ is a pushout of some $\psi_{n_{\za+1}}$ ($n_{\za+1}\ge 0$). Then the morphism $X_0\to \op{colim}_{\za<\zl}X_\za$ is exactly what we call a transfinite composition of pushouts of $\psi_n$-s. Our task is to show that this morphism is a {\small RS$\cD$A}.\medskip

We first compute the terms $X_\za$, $\za<\zl,$ of the $\zl$-sequence, then we determine its colimit. For $\za<\zl$ (resp., for $\za<\zl, \za\in{\frak O}_s$), we denote the differential graded $\cD$-algebra $X_\za$ (resp., the $\tt DG\cD A$-morphism $X_{\za-1}\to X_{\za}$) by $(A_\za,d_\za)$ (resp., by $X_{\za,\za-1}:(A_{\za-1},d_{\za-1})\to (A_{\za},d_{\za})$). Since $X_{\za,\za-1}$ is the pushout of some $\psi_{n_{\za}}$ along some $\tt DG\cD A$-morphism $\zf_{\za}$, its target algebra is of the form \be\label{Successor}(A_{\za},d_{\za})=(A_{\za-1}\boxtimes\cS\langle a_{\za}\ra,d_{\za})\;\ee and $X_{\za,\za-1}$ is the canonical inclusion \be\label{SuccessorMorph}X_{\za,\za-1}:(A_{\za-1},d_{\za-1})\ni \frak{a}_{\za-1}\mapsto \frak{a}_{\za-1}\0 1_\cO\in (A_{\za-1}\boxtimes\cS\langle a_\za\rangle,d_\za)\;,\ee see Example \ref{Lem1}. Here $a_{\za}$ is the generator $1_{n_{\za}}$ of $S^{n_{\za}}$ and $\langle a_{\za}\ra$ is the free non-negatively graded $\cD$-module $S^{n_{\za}}=\cD\cdot a_{\za}$ concentrated in degree $n_{\za}$; further, the differential \be\label{RSA-d-2}d_{\za}\;\;\text{is defined by (\ref{RSA-d}) from}\;\; d_{\za-1}\;\;\text{and}\;\;\zk_{n_{\za}-1}:=\zf_{\za}(1_{n_{\za}-1})\;.\ee In particular, $A_1=A_0\boxtimes\cS\langle a_1\ra\,,$ $d_1(a_1)=\zk_{n_1-1}=\zf_1(1_{n_1-1})\in A_0\,,$ and $X_{10}:A_0\to A_1$ is the inclusion.\medskip

\begin{lem}\label{Lem4} For any $\za<\zl$, we have
\be\label{Lem41}A_{\za}\simeq A_0\otimes \cS \langle a_\delta: \delta\leq\za, \zd\in\frak{O}_s\rangle\;\ee as a graded $\cD$-algebra,
and
\be\label{Lem42}d_{\za}(a_{\delta})\in A_0\otimes \cS \langle  a_\ze: \ze< \delta, \ze\in\frak{O}_s\rangle\;,\ee
for all $\delta\leq \za$, $\zd\in\frak{O}_s$. Moreover, for any $\zg\le\zb\le \za <\zl$, we have $$A_\zb=A_\zg\0\cS\langle a_\zd:\zg<\zd\le\zb,\zd\in{\frak O}_s\rangle$$ and the $\tt DG\cD A$-morphism $X_{\zb\zg}$ is the natural inclusion \be\label{Lem43}X_{\zb\zg}: (A_\zg,d_\zg)\ni \frak{a}_\zg\mapsto \frak{a}_\zg\0 1_\cO\in (A_\zb,d_\zb)\;.\ee Since the latter statement holds in particular for $\zg=0$ and $\zb=\za$, the $\tt DG\cD A$-inclusion $X_{\za 0}:(A_0,d_0)\to (A_\za,d_\za)$ is a {\small RS$\cD$A} $(\,$for the natural ordering of $\{a_\delta: \zd\le\za,\zd\in\frak{O}_s\}\,).$\end{lem}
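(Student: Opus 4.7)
I would prove the lemma by transfinite induction on $\alpha < \lambda$, treating the three standard cases (zero, successor, limit) separately; in each case I must simultaneously verify the graded $\cD$-algebra identity \eqref{Lem41}, the lowering property \eqref{Lem42}, and the explicit inclusion description \eqref{Lem43} of the transition maps $X_{\beta\gamma}$. The base case $\alpha=0$ is trivial: $\{a_\delta:\delta\le 0,\delta\in\frak{O}_s\}=\emptyset$, so the claim reads $A_0\simeq A_0\otimes\cS\langle\,\rangle=A_0\otimes\cO$, which is the unit axiom for $\otimes_\cO$ in $\tt DG\cD M$, and $X_{00}=\id$.

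For the successor case $\alpha=\beta+1\in\frak{O}_s$, Equation \eqref{Successor} gives $A_\alpha=A_\beta\boxtimes\cS\langle a_\alpha\rangle$ and \eqref{SuccessorMorph} identifies $X_{\alpha,\beta}$ with $\frak{a}_\beta\mapsto\frak{a}_\beta\0 1_\cO$. The inductive hypothesis supplies $A_\beta\simeq A_0\otimes\cS\langle a_\delta:\delta\le\beta,\delta\in\frak{O}_s\rangle$ as graded $\cD$-algebras; combining this with Lemma \ref{DirSumTenPro} (which gives $\cS(M)\0\cS(N)\simeq\cS(M\oplus N)$) yields \eqref{Lem41} at level $\alpha$. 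The inclusion description \eqref{Lem43} for arbitrary $\gamma\le\alpha$ follows by composing $X_{\alpha,\beta}$ with the inductively known $X_{\beta,\gamma}$, both of which are canonical inclusions of the form $\frak{a}\mapsto\frak{a}\0 1_\cO$. For \eqref{Lem42}, Equation \eqref{RSA-d-2} places $d_\alpha(a_\alpha)=\kappa_{n_\alpha-1}\in A_\beta$, which by induction lies in $A_0\otimes\cS\langle a_\epsilon:\epsilon<\alpha,\epsilon\in\frak{O}_s\rangle$; for the older generators $a_\delta$ with $\delta\le\beta$, the fact that $X_{\alpha,\beta}$ is a $\tt DG\cD A$-morphism forces $d_\alpha(a_\delta)=X_{\alpha,\beta}(d_\beta(a_\delta))$, whence \eqref{Lem42} is inherited from the inductive hypothesis.

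For the limit case $\alpha\in\frak{O}_\ell$, I would use Proposition \ref{Enrichment} to compute $A_\alpha=\colim_{\beta<\alpha}A_\beta$ in $\tt DG\cD M$, hence degreewise in $\tt\cD M$. By induction every transition $X_{\beta'\beta}$ with $\beta\le\beta'<\alpha$ is a canonical inclusion $A_\beta\hookrightarrow A_\beta\otimes\cS\langle a_\delta:\beta<\delta\le\beta',\delta\in\frak{O}_s\rangle$, so the directed colimit of this inductive system of monomorphisms is the filtered union
\begin{equation*}
A_\alpha \;\simeq\; A_0\otimes\cS\langle a_\delta:\delta<\alpha,\,\delta\in\frak{O}_s\rangle\;,
\end{equation*}
which, because $\alpha$ is not a successor, is the same as the set indexed by $\delta\le\alpha$; this establishes \eqref{Lem41} and, by the same universal property, \eqref{Lem43} for all $\gamma\le\beta\le\alpha$. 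For \eqref{Lem42}, each generator $a_\delta$ with $\delta\in\frak{O}_s$, $\delta\le\alpha$ is already present at stage $\delta$, and the compatibility of the differentials along the colimit cone shows $d_\alpha(a_\delta)=X_{\alpha,\delta}(d_\delta(a_\delta))$, so the lowering property transfers from \eqref{Lem42} at stage $\delta$.

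The final assertion — that $X_{\alpha 0}$ is a {\small RS$\cD$A} — is now essentially immediate: the well-ordering on $\{a_\delta:\delta\le\alpha,\,\delta\in\frak{O}_s\}$ inherited from the ordinal order turns \eqref{Lem42} into precisely the lowering condition \eqref{Lowering} in Definition \ref{RSullDAlg}. The main obstacle is the limit step: one must be sure both that the colimit in $\tt DG\cD A$ is calculated in $\tt DG\cD M$ (handled by Proposition \ref{Enrichment}) and that filtered colimits of canonical tensor-product inclusions of the stated shape really do identify with the tensor product of $A_0$ with the free commutative $\cD$-algebra on all previous generators; the latter boils down to the fact that $\cS$, being a left adjoint (Equation \eqref{Adj}), commutes with colimits, combined with a repeated application of Lemma \ref{DirSumTenPro}.
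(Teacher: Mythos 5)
Your proof is correct and follows essentially the same route as the paper's: a transfinite induction distinguishing successor and limit ordinals, using Equation (\ref{Successor}) together with Lemma \ref{DirSumTenPro} in the successor step, the identification of the limit-stage colimit with a filtered union of canonical inclusions (justified by the creation of colimits, Proposition \ref{Enrichment}) in the limit step, and the compatibility of the differentials to propagate the lowering property (\ref{Lem42}). The only cosmetic difference is that you anchor the induction at $\za=0$ while the paper starts at $\za=1$.
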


\begin{proof}[Proof of Lemma \ref{Lem4}] To prove that this claim (i.e., Equations (\ref{Lem41}) -- (\ref{Lem43})) is valid for all ordinals that are smaller than $\zl$, we use a transfinite induction. Since the assertion obviously holds for $\za=1,$ it suffices to prove these properties for $\za<\zl$, assuming that they are true for all $\zb<\za$. We distinguish (as usually in transfinite induction) the cases $\za\in\frak{O}_s$ and $\za\in\frak{O}_\ell$.\medskip

If $\za\in\frak{O}_s$, it follows from Equation (\ref{Successor}), from the induction assumption, and from Lemma \ref{DirSumTenPro}, that $$A_\za=A_{\za-1}\0\cS\langle a_\za\ra\simeq A_0\0 \cS\langle a_\zd: \zd\le \za,\zd\in\frak{O}_s\ra\;,$$ as graded $\cD$-algebra. Further, in view of Equation (\ref{RSA-d-2}) and the induction hypothesis, we get $$d_\za(a_\za)=\zf_{\za}(1_{n_\za-1})\in A_{\za-1}=A_0\0 \cS\langle a_\zd: \zd<\za,\zd\in\frak{O}_s\ra\;,$$ and, for $\zd\le\za-1$, $\zd\in\frak{O}_s$, $$d_\za(a_\zd)=d_{\za-1}(a_\zd)\in A_0\otimes \cS \langle  a_\zg: \zg< \delta, \zg\in\frak{O}_s\rangle\;.$$ Finally, as concerns $X_{\zb\zg}$, the unique case to check is $\zg\le\za-1$ and $\zb=\za$. The $\tt DG\cD A$-map $X_{\za-1,\zg}$ is an inclusion $$X_{\za-1,\zg}: A_\zg\ni \frak{a}_\zg\mapsto \frak{a}_\zg\0 1_\cO\in A_{\za-1}\;$$ (by induction), and so is the $\tt DG\cD A$-map $$X_{\za,\za-1}:A_{\za-1}\ni \frak{a}_{\za-1}\mapsto \frak{a}_{\za-1}\0 1_\cO\in A_\za\;$$ (in view of (\ref{SuccessorMorph})). The composite $X_{\za\zg}$ is thus a $\tt DG\cD A$-inclusion as well.\medskip

In the case $\za\in\frak{O}_\ell$, i.e., $\za=\op{colim}_{\zb<\za}\zb$, we obtain $(A_\za,d_\za)=\op{colim}_{\zb<\za}(A_\zb,d_\zb)$ in $\tt DG\cD A$, since $X$ is a colimit respecting functor. The index set $\za$ is well-ordered, hence, it is a directed poset. Moreover, for any $\zd\le\zg\le\zb<\za$, the $\tt DG\cD A$-maps $X_{\zb\zd}$, $X_{\zg\zd}$, and $X_{\zb\zg}$ satisfy $X_{\zb\zd}=X_{\zb\zg}\circ X_{\zg\zd}\,$. It follows that the family $(A_\zb,d_\zb)_{\zb<\za},$ together with the family $X_{\zb\zg}$, $\zg\le\zb<\za$, is a direct system in $\tt DG\cD A$, whose morphisms are, in view of the induction assumption, natural inclusions $$X_{\zb\zg}:A_\zg\ni \frak{a}_\zg\mapsto \frak{a}_\zg\0 1_\cO\in A_\zb\;.$$ The colimit $(A_\za,d_\za)=\op{colim}_{\zb<\za}(A_\zb,d_\zb)$ is thus a direct limit. However, a direct limit in $\tt DG\cD A$ coincides with the corresponding direct limit in $\tt DG\cD M$, or even in $\tt Set$ (which is then naturally endowed with a differential graded $\cD$-algebra structure). As a set, the direct limit $(A_\za,d_\za)=\op{colim}_{\zb<\za}(A_\zb,d_\zb)$ is given by $$A_\za=\coprod_{\zb<\za}A_\zb/\sim\;,$$ where $\sim$ means that we identify $\frak{a}_\zg$, $\zg\le\zb$, with $$\frak{a}_\zg\sim X_{\zb\zg}(\frak{a}_\zg)=\frak{a}_\zg\0 1_\cO\;,$$ i.e., that we identify $A_\zg$ with $$A_\zg\sim A_\zg\0\cO\subset A_\zb\;.$$ It follows that $$A_\za=\bigcup_{\zb<\za}A_\zb=A_0\0\cS\langle a_\zd:\zd<\za,\zd\in\frak{O}_s\ra=A_0\0\cS\langle a_\zd:\zd\le\za,\zd\in\frak{O}_s\ra\;.$$ As just mentioned, this set $A_\za$ can naturally be endowed with a differential graded $\cD$-algebra structure. For instance, the differential $d_\za$ is defined in the obvious way from the differentials $d_\zb$, $\zb<\za$. In particular, any generator $a_\zd$, $\zd\le\za$, $\zd\in\frak{O}_s$, belongs to $A_\zd$. Hence, by definition of $d_\za$ and in view of the induction assumption, we get $$d_\za(a_\zd)=d_\zd(a_\zd)\in A_0\0\cS\langle a_\ze:\ze<\zd,\ze\in\frak{O}_s\ra\;.$$ {Finally}, since $X$ is colimit respecting, not only $A_\za =\colim_{\zb<\za}A_\zb=\bigcup_{\zb<\za}A_\zb$, but, furthermore, for any $\zg<\za$, the $\tt DG\cD A$-morphism $X_{\za\zg}:A_\zg\to A_\za$ is the map $X_{\za\zg}:A_\zg\to \bigcup_{\zb<\za}A_\zb$, i.e., the canonical inclusion.\end{proof}

We now come back to the proof of Part (i) of Theorem \ref{Reduction}, i.e., we now explain why the morphism $i:(A_0,d_0)\to C$, where $C=\op{colim}_{\za<\zl}(A_\za,d_\za)$ and where $i$ is the first of the morphisms that are part of the colimit construction, is a {\small RS$\cD$A} -- see above. If $\zl\in\frak{O}_s$, the colimit $C$ coincides with $(A_{\zl-1},d_{\zl-1})$ and $i=X_{\zl-1,0}$. Hence, the morphism $i$ is a {\small RS$\cD$A} in view of Lemma \ref{Lem4}. If $\zl\in\frak{O}_\ell$, the colimit $C=\op{colim}_{\za<\zl}(A_\za,d_\za)$ is, like above, the direct limit of the direct $\tt DG\cD A$-system $(X_\za=(A_\za,d_\za),X_{\za\zb})$ indexed by the directed poset $\zl$, whose morphisms $X_{\za\zb}$ are, in view of Lemma \ref{Lem4}, canonical inclusions. Hence, $C$ is again an ordinary union: \be\label{LimUnion}C=\bigcup_{\za<\zl}A_\za=A_0\0\cS\langle a_\zd: \zd<\zl,\zd\in\frak{O}_s\ra\;,\ee where the last equality is due to Lemma \ref{Lem4}. We define the differential $d_C$ on $C$ exactly as we defined the differential $d_\za$ on the direct limit in the proof of Lemma \ref{Lem4}. It is then straightforwardly checked that $i$ is a {\small RS$\cD$A}.\medskip

(ii) We still have to show that any {\small RS$\cD$A} $(A_0,d_0)\to (A_0\boxtimes\cS V,d)$ can be constructed as a transfinite composition of pushouts of generating cofibrations $\psi_n$, $n\ge 0$. Let $(a_j)_{j\in J}$ be the basis of the free non-negatively graded $\cD$-module $V$. Since $J$ is a well-ordered set, it is order-isomorphic to a unique ordinal $\zm=\{0,1,\ldots,n,\ldots,\zw,\zw+1,\ldots\}$, whose elements can thus be utilized to label the basis vectors. However, we prefer using the following order-respecting relabelling of these vectors: $$a_0\rightsquigarrow a_1, a_1\rightsquigarrow a_2,\ldots, a_n\rightsquigarrow a_{n+1},\ldots, a_\omega\rightsquigarrow a_{\omega+1}, a_{\omega+1}\rightsquigarrow a_{\omega+2},\ldots$$ In other words, the basis vectors of $V$ can be labelled by the successor ordinals that are strictly smaller than $\zl:=\zm+1\,$ (this is true, whether $\zm\in\frak{O}_s$, or $\zm\in\frak{O}_\ell\,$): $$V=\bigoplus_{\zd<\zl,\;\zd\in\frak{O}_s} \cD\cdot a_\zd\;.$$

For any $\za<\zl$, we now set $$(A_\za,d_\za):=(A_0\boxtimes \cS\langle a_\delta: \delta\leq\za, \zd\in\frak{O}_s\rangle,d|_{A_\za})\;.$$ It is clear that $A_\za$ is a graded $\cD$-subalgebra of $A_0\0\cS V$. Since $A_\za$ is generated, as an algebra, by the elements of the types $\frak{a}_0\0 1_\cO$ and $D\cdot(1_{A_0}\0 a_\zd)$, $D\in\cD$, $\zd\le\za,$ $\zd\in\frak{O}_s$, and since $$d(\frak{a}_0\0 1_\cO)=d_0(\frak{a}_0)\0 1_\cO\in A_\za$$ and $$d(D\cdot(1_{A_0}\0 a_\zd))\in A_0\0\cS\langle a_\ze:\ze<\zd,\ze\in\frak{O}_s\ra\subset A_\za\;,$$ the derivation $d$ stabilizes $A_\za$. Hence, $(A_\za,d_\za)=(A_\za,d|_{A_\za})$ is actually a differential graded $\cD$-subalgebra of $(A_0\boxtimes\cS V,d)$.\medskip

If $\zb\le\za<\zl$, the algebra $(A_\zb,d|_{A_\zb})$ is a differential graded $\cD$-subalgebra of $(A_\za,d|_{A_\za})$, so that the canonical inclusion $i_{\za\zb}:(A_\zb,d_\zb)\to(A_\za,d_\za)$ is a $\tt DG\cD A$-morphism. In view of the techniques used in (i), it is obvious that the functor $X=(A_-,d_-):\zl\to \tt DG\cD A$ respects colimits, and that the colimit of the whole $\zl$-sequence (remember that $\zl=\zm+1\in\frak{O}_s$) is the algebra $(A_\zm,d_\zm)=(A_0\boxtimes\cS V,d)$, i.e., the original algebra.\medskip

The {\small RS$\cD$A} $(A_0,d_0)\to (A_0\boxtimes\cS V,d)$ has thus been built as transfinite composition of canonical $\tt DG\cD A$-inclusions $i:(A_{\za},d_\za)\to (A_{\za+1},d_{\za+1})$, $\za+1<\zl$. Recall that $$A_{\za+1}=A_\za\0\cS\langle a_{\za+1}\ra\simeq A_\za\0\cS(S^n)\;,$$ if we set $n:=\deg(a_{\za+1})$. It suffices to show that $i$ is a pushout of $\psi_n$, see Figure \ref{CPD2}.
\begin{figure}[h]
\begin{center}
\begin{tikzpicture}
  \matrix (m) [matrix of math nodes, row sep=3em, column sep=3em]
    {  \cS(S^{n-1}) & (A_\za,d_\za)  \\
       \cS(D^n) & (A_\za\boxtimes\cS(S^n),d_{\za+1})  \\ };
 \path[->]
 (m-1-2) edge  node[right] {$\scriptstyle{i}$} (m-2-2);
 \path[->]
 (m-1-1) edge  node[above] {$\scriptstyle{\zf}$} (m-1-2);
  \path[->]
 (m-1-1) edge  node[left] {$\scriptstyle{\psi_n}$} (m-2-1);
  \path[->]
 (m-2-1) edge  node[above] {$\scriptstyle{j}$} (m-2-2);
\end{tikzpicture}
\caption{$i$ as pushout of $\psi_n$}\label{CPD2}
\end{center}
\end{figure}
\noindent We will detail the case $n\ge 1$. Since all the differentials are restrictions of $d$, we have $\zk_{n-1}:=d_{\za+1}(a_{\za+1})\in A_\za\cap\ker_{n-1}d_{\za}$, and $\zf(1_{n-1}):=\zk_{n-1}$ defines a $\tt DG\cD A$-morphism $\zf$, see Example \ref{Lem1}. When using the construction described in Example \ref{Lem1}, we get the pushout $i:(A_\za,d_\za)\to (A_\za\boxtimes\cS(S^n),\p)$ of $\psi_n$ along $\zf$. Here $i$ is the usual canonical inclusion and $\p$ is the differential defined by Equation (\ref{RSA-d}). It thus suffices to check that $\p=d_{\za+1}$. Let $\frak{a}_\za\in A^p_\za$ and let $x_1\simeq x_1\cdot\, a_{\za+1},\ldots,x_k\simeq x_k\cdot\, a_{\za+1}\in\cD\cdot\, a_{\za+1}=S^n$. Assume, to simplify, that $k=2$; the general case is similar. When denoting the multiplication in $A_\za$ (resp., $A_{\za+1}=A_\za\0\cS(S^n)$) as usual by $\ast$ (resp., $\star\,$), we obtain $$\p(\frak{a}_\za\0 x_1\odot x_2)=$$
$$d_\za(\frak{a}_\za)\0 x_1\odot x_2 + (-1)^p(\frak{a}_\za\ast (x_1\cdot \zk_{n-1}))\0 x_2+(-1)^{p+n}(\frak{a}_\za\ast (x_2\cdot\zk_{n-1}))\0 x_1=$$

$$(d_\za(\frak{a}_\za)\0 1_\cO)\star (1_{A_\za}\0 x_1)\star (1_{A_\za}\0 x_2)+$$ $$(-1)^p(\frak{a}_\za\0 1_\cO)\star ((x_1\cdot\zk_{n-1})\0 1_\cO)\star (1_{A_\za}\0 x_2)+$$ $$(-1)^{p+n}(\frak{a}_\za\0 1_\cO)\star (1_{A_\za}\0 x_1)\star ((x_2\cdot\zk_{n-1})\0 1_\cO)=$$

$$d_{\za+1}(\frak{a}_\za\0 1_\cO)\star (1_{A_\za}\0 x_1)\star (1_{A_\za}\0 x_2)+$$ $$(-1)^p(\frak{a}_\za\0 1_\cO)\star d_{\za+1}(1_{A_\za}\0 x_1)\star (1_{A_\za}\0 x_1)+$$ $$(-1)^{p+n}(\frak{a}_\za\0 1_\cO)\star (1_{A_\za}\0 x_1)\star d_{\za+1}(1_{A_\za}\0 x_2)=$$

$$d_{\za+1}(\frak{a}_\za\0 x_1\odot x_2)\;.$$
\end{proof}

\section{Explicit functorial factorizations}\label{Factorizations}

The main idea of Subsection \ref{Condition2} is the decomposition of an arbitrary $\tt DG\cD A$-morphism $\zf:A\to B$ into a weak equivalence $i:A\to A\0\cS U$ and a fibration $p:A\0\cS U\to B$. It is easily seen that $i$ is a split relative Sullivan $\cD$-algebra. Indeed, \be\label{U}U=P(B)=\bigoplus_{n>0}\bigoplus_{b_n\in B_n}D^n_\bullet\in\tt DG\cD M\ee with differential $d_U=d_P$ defined by \be\label{dU}d_U(s^{-1}\mathbb{I}_{b_n})=0\quad\text{and}\quad d_U(\mathbb{I}_{b_n})=s^{-1}\mathbb{I}_{b_n}\;.\ee Hence, $\cS U\in\tt DG\cD A$, with differential $d_S$ induced by $d_U$, and $A\0 \cS U\in\tt DG\cD A$, with differential \be\label{d}d_1=d_A\0 \id +\id\0 d_S\;.\ee Therefore, $i:A\to A\0\cS U$ is a $\tt DG\cD A$-morphism. Since $U$ is the free non-negatively graded $\cD$-module with homogeneous basis $$G=\{s^{-1}\mathbb{I}_{b_n}, \mathbb{I}_{b_n}:b_n\in B_n, n>0\}\;,$$ all the requirements of the definition of a split {\small RS$\cD$A} are obviously satisfied, except that we still have to check the well-ordering and the lowering condition.\medskip

Since every set can be well-ordered, we first choose a well-ordering on each $B_n$, $n>0$: if $\zl_n$ denotes the unique ordinal that belongs to the same equivalence class of well-ordered sets, the elements of $B_n$ can be labelled by the elements of $\zl_n$. Then we define the following total order: the $s^{-1}\mathbb{I}_{b_1}$, $b_1\in B_1$, are smaller than the $\mathbb{I}_{b_1}$, which are smaller than the $s^{-1}\mathbb{I}_{b_2}$, and so on ad infinitum. The construction of an infinite decreasing sequence in this totally ordered set amounts to extracting an infinite decreasing sequence from a finite number of ordinals $\zl_1,\zl_1,\ldots,\zl_k$. Since this is impossible, the considered total order is a well-ordering. The lowering condition is thus a direct consequence of Equations (\ref{dU}) and (\ref{d}).\medskip

Let now $\{\zg_\za:\za \in J\}$ be the set $G$ of generators endowed with the just defined well-order. Observe that, if the label $\za$ of the generator $\zg_\za$ increases, its degree $\deg\zg_\za$ increases as well, i.e., that \be\label{Mini}\za\le\zb\quad \Rightarrow\quad \deg \zg_\za\le\deg\zg_\zb\;.\ee

\def\fb{{\mathfrak b}}

{Finally}, any $\tt DG\cD A$-morphism $\zf:A\to B$ admits a functorial factorization \be\label{TrivCofFib}A\stackrel{i}{\longrightarrow}A\0\cS U\stackrel{p}{\longrightarrow}B\;,\ee where $p$ is a fibration and $i$ is a weak equivalence, as well as a split {\small RS$\cD$A}. In view of Theorem \ref{Cof}, the morphism $i$ is thus a cofibration, with the result that we actually constructed a natural decomposition $\zf=p\circ i$ of an arbitrary $\tt DG\cD A$-morphism $\zf$ into $i\in\text{\small TrivCof}$ and $p\in\text{\small Fib}$. The description of this factorization is summarized below, in Theorem \ref{P:c-tf_tc-f}, which provides essentially an explicit natural `{\small Cof -- TrivFib}' decomposition \be\label{CofTrivFib}A\stackrel{i'}{\longrightarrow}A\0\cS U'\stackrel{p'}{\longrightarrow}B\;.\ee

Before stating Theorem \ref{P:c-tf_tc-f}, we sketch the construction of the factorization (\ref{CofTrivFib}). To simplify, we denote algebras of the type $A\0 \cS V_k$ by $R_{V_k}$, or simply $R_k\,$.\medskip

We start from the `small' `{\small Cof -- Fib}' decomposition (\ref{TrivCofFib}) of a $\tt DG\cD A$-morphism $A\stackrel{\zf}{\longrightarrow} B$, i.e., from the factorization $A\stackrel{i}{\longrightarrow}R_U\stackrel{p}{\longrightarrow}B$. To find a substitute $q$ for $p$, which is a trivial fibration, {\it we mimic an idea used in the construction of the Koszul-Tate resolution: we add generators to improve homological properties}.\medskip

Note first that $H(p)$ is surjective if, for any homology class $[\zb_n]\in H_n(B)$, there is a class $[\zr_n]\in H_n(R_U)$, such that $[p\,\zr_n]=[\zb_n]$. Hence, consider all the homology classes $[\zb_n]$, $n\ge 0,$ of $B$, choose in each class a representative $\dot\zb_n\simeq [\zb_n]$, and add generators $\mbi_{\dot\zb_n}$ to those of $U$. It then suffices to extend the differential $d_1$ (resp., the fibration $p$) defined on $R_U=A\0\cS U$, so that the differential of $\mbi_{\dot\zb_n}$ vanishes (resp., so that the projection of $\mbi_{\dot\zb_n}$ coincides with $\dot\zb_n$) ($\rhd_1$ -- this triangle is just a mark that allows us to retrieve this place later on). To get a {\it functorial} `{\small Cof -- TrivFib}' factorization, we do not add a new generator $\mbi_{\dot\zb_n}$, for each homology class $\dot\zb_n\simeq [\zb_n]\in H_n(B)$, $n\ge 0,$ but we add a new generator $\mbi_{\zb_n}$, for each cycle $\zb_n\in\ker_n d_B$, $n\ge 0\,.$ Let us implement this idea in a rigorous manner. Assign the degree $n$ to $\mbi_{\zb_n}$ and set $$V_0:=U\oplus G_0:= U\oplus \langle \mbi_{\zb_n}: \zb_n\in\ker_n d_B, n\ge 0\ra=$$ \be\label{Newg}\langle s^{-1}\mbi_{b_n}, \mbi_{b_n}, \mbi_{\zb_n}: b_n\in B_n, n>0, \zb_n\in \ker_n d_B, n\ge 0 \ra\;.\ee Set now \be\label{Diffg}\zd_{V_0}(s^{-1}\mbi_{b_n})=d_1(s^{-1}\mbi_{b_n})=0,\;\;\zd_{V_0}\mbi_{b_n}=d_1\mbi_{b_n}={s^{-1}\mbi_{b_n}},\;\;\zd_{V_0}\mbi_{\zb_n}=0\;,\ee thus defining, in view of Lemma \ref{DiffGen}, a differential graded $\cD$-module structure on $V_0$. It follows that $(\cS V_0,\zd_{V_0})\in \tt DG\cD A$ and that \be\label{NewgTot}(R_0,\zd_0):=(A\0\cS V_0,d_A\0\id+\id\0\,\zd_{V_0})\in\tt DG\cD A\;.\ee Similarly, we set \be\label{Morpg}q_{V_0}(s^{-1}\mbi_{b_n})=p(s^{-1}\mbi_{b_n})=\ze(s^{-1}\mbi_{b_n})=d_Bb_n,\;\;q_{V_0}\mbi_{b_n}=p\mbi_{b_n}=\ze\mbi_{b_n}=b_n,\;\;q_{V_0}\mbi_{\zb_n}={\zb}_n\;.\ee We thus obtain, see Lemma \ref{MorpGen}, a morphism $q_{V_0}\in{\tt DG\cD M}(V_0,B)$ -- which uniquely extends to a morphism $q_{V_0}\in{\tt {DG\cD A}}(\cS V_0,B)$. Finally, \be\label{Morp}q_0=\zm_B\circ(\zf\0 q_{V_0})\in {\tt DG\cD A}(R_0,B)\;,\ee where $\zm_B$ denotes the multiplication in $B$. Let us emphasize that $R_U=A\0\cS U$ is a direct summand of $R_0=A\0\cS V_0$, and that $\zd_0$ and $q_0$ just extend the corresponding morphisms on $R_U$: $\zd_0|_{R_U}=d_1$ and $q_0|_{R_U}=p\,$.\medskip

So far we ensured that $H(q_0):H(R_0)\to H(B)$ is surjective; however, it must be injective as well, i.e., for any $\zs_n\in\ker \zd_0$, $n\ge 0,$ such that $H(q_0)[\zs_n]=0$, i.e., such that $q_0\zs_n\in\im d_B$, there should exist $\zs_{n+1}\in R_0$ such that \be\label{Extension}\zs_n=\zd_0\zs_{n+1}\;.\ee We denote by $\cB_0$ the set of $\zd_0$-cycles that are sent to $d_B$-boundaries by $q_0\,$: $${\cB}_0=\{\zs_n\in\ker\zd_0: q_0\zs_n\in\im d_B, n\ge 0\}\;.$$ In principle it now suffices to add, to the generators of $V_0$, generators $\mbi^1_{\zs_n}$ of degree $n+1$, $\zs_n\in{\cB}_0$, and to extend the differential $\zd_0$ on $R_0$ so that the differential of $\mbi^1_{\zs_n}$ coincides with $\zs_n$ ($\rhd_2$). However, it turns out that to obtain a {\it functorial} `{\small Cof -- TrivFib}' decomposition, we must add a new generator $\mbi^1_{\zs_n,\fb_{n+1}}$ of degree $n+1$, for each pair $(\zs_n,\fb_{n+1})$ such that $\zs_n\in\ker\zd_0$ and $q_0\zs_n=d_B\fb_{n+1}\,$: we set \be\label{Critset}{\frak B}_0=\{(\zs_n,\fb_{n+1}):\zs_n\in\ker\zd_0,\fb_{n+1}\in d_B^{-1}\{q_0\zs_n\},n\ge 0\}\ee and \be\label{Newgen}V_1:=V_0\oplus G_1:= V_0\oplus\langle \mbi^1_{\zs_n,\fb_{n+1}}:(\zs_n,\fb_{n+1})\in{\frak B}_0\ra\;.\ee To endow the graded $\cD$-algebra \be\label{VewDGDA}R_1:=A\0\cS V_1\simeq R_0\0\cS G_1\ee with a differential graded $\cD$-algebra structure $\zd_1$, we apply Lemma \ref{LemRSA}, with \be\label{DiffNewGen}\zd_1(\mbi^1_{\zs_n,\fb_{n+1}})=\zs_n\in (R_0)_n\cap \ker\zd_0\;,\ee exactly as suggested by Equation (\ref{Extension}). The differential $\zd_1$ is then given by Equation (\ref{DefRSADiff}) and it extends the differential $\zd_0$ on $R_0$. The extension of the $\tt DG\cD A$-morphism $q_0:R_0\to B$ by a $\tt DG\cD A$-morphism $q_1:R_1\to B$ is built from its definition \be\label{MorpNewGen}q_1(\mbi^1_{\zs_n,\fb_{n+1}})=\fb_{n+1}\in B_{n+1}\cap d_B^{-1}\{q_0\zd_1(\mbi^1_{\zs_n,\fb_{n+1}})\}\ee on the generators and from Equation (\ref{DefRSAMorph}) in Lemma \ref{LemRSA}.\medskip

{Finally}, starting from $(R_U,d_1)\in{\tt DG\cD A}$ and $p\in{\tt DG\cD A}(R_U,B)$, we end up -- when trying to make $H(p)$ bijective -- with $(R_{1},\zd_1)\in{\tt DG\cD A}$ and $q_1\in{\tt DG\cD A}(R_{1},B)$ -- so that the question is whether $H(q_1):H(R_1)\to H(B)$ is bijective or not. Since $(R_1,\zd_1)$ extends $(R_0,\zd_0)$ and $H(q_0):H(R_0)\to H(B)$ is surjective, it is easily checked that this property holds a fortiori for $H(q_1)$. However, when working with $R_1\supset R_0$, the `critical set' ${\cal B}_1\supset {\cal B}_0$ increases, so that we must add new generators $\mbi_{\zs_n}^2$, $\zs_n\in{\cal B}_1\setminus{\cal B}_0$, where $${\cal B}_1=\{\zs_n\in\ker\zd_1:q_1\zs_n\in\im d_B, n\ge 0\}\;.\quad (\rhd_3)$$ To build a {\it functorial$\,$} factorization, we consider not only the `critical set' \be\label{CritSet}{\frak B}_1=\{(\zs_n,\fb_{n+1}):\zs_n\in\ker\zd_1, \fb_{n+1}\in d_B^{-1}\{q_1\zs_n\},n\ge 0\}\;,\ee but also the module of new generators \be\label{NewGen}G_2=\langle\mbi^2_{\zs_n,\fb_{n+1}}:(\zs_n,\fb_{n+1})\in{\frak B}_1\ra\;,\ee indexed, not by ${\frak B}_1\setminus{\frak B}_0$, but by ${\frak B}_1$. Hence an iteration of the procedure (\ref{Critset}) - (\ref{MorpNewGen}) and the definition of a sequence $$(R_0,\zd_0)\rightarrow (R_1,\zd_1)\rightarrow(R_2,\zd_2)\rightarrow\ldots\rightarrow(R_{k-1},\zd_{k-1})\rightarrow (R_{k},\zd_{k})\rightarrow\ldots$$ of canonical inclusions of differential graded $\cD$-algebras $(R_k,\zd_k)$, $R_k=A\0\cS V_k$, $\zd_k|_{R_{k-1}}=\zd_{k-1}$, together with a sequence of ${\tt DG\cD A}$-morphisms $q_k:R_k\to B$, such that $q_k|_{R_{k-1}}=q_{k-1}$. The definitions of the differentials $\zd_k$ and the morphisms $q_k$ are obtained inductively, and are based on Lemma \ref{LemRSA}, as well as on equations of the same type as (\ref{DiffNewGen}) and (\ref{MorpNewGen}).\medskip

The direct limit of this sequence is a differential graded $\cD$-algebra $(R_V,d_2)=(A\0\cS V,d_2)$, together with a morphism $q:A\0\cS V\to B$.

As a set, the colimit of the considered system of canonically included algebras $(R_k,\zd_k)$, is just the union of the sets $R_k$, see Equation (\ref{LimUnion}). We proved above that this set-theoretical inductive limit can be endowed in the standard manner with a differential graded $\cD$-algebra structure and that the resulting algebra {\it is} the direct limit in $\tt DG\cD A$. One thus obtains in particular that $d_2|_{R_k}=\zd_k\,$.

Finally, the morphism $q:R_V\to B$ comes from the universality property of the colimit and it allows us to factor the morphisms $q_k:R_k\to B$ through $R_V$. We have: $q|_{R_k}=q_k\,$.\medskip

We will show that this morphism $A\0\cS V\stackrel{q}{\longrightarrow} B$ really leads to a `{\small Cof -- TrivFib'} decomposition $A\stackrel{j}{\longrightarrow} A\0\cS V\stackrel{q}{\longrightarrow} B$ of $A\stackrel{\zf}{\longrightarrow} B$.

\begin{theo}\label{P:c-tf_tc-f}
In $\tt DG\mathcal{D}A$, a functorial `{\small TrivCof -- Fib}' factorization $(i,p)$ and a functorial `{Cof -- TrivFib}' factorization $(j,q)$ of an arbitrary morphism
$$
\zf:(A,d_A)\to (B,d_B)\;,
$$
see Figure \ref{Fact}, can be constructed as follows:\medskip

\begin{figure}[h]
\begin{center}
\begin{tikzpicture}
  \matrix (m) [matrix of math nodes, row sep=3em, column sep=3em]
    {  (A,d_A) & (A\boxtimes\cS U,d_1)  \\
       (A\boxtimes\cS V,d_2) & (B,d_B)  \\ };
 \path[->>]
 (m-1-2) edge  node[right] {$\scriptstyle{p}$} (m-2-2);
 \path[->]
 (m-1-1) edge  node[below] {$\scriptstyle{\zf}$} (m-2-2);
 \path[>->]
 (m-1-1) edge  node[above] {$\sim$} node[below] {$\scriptstyle{i}$} (m-1-2);
  \path[>->]
 (m-1-1) edge  node[left] {$\scriptstyle{j}$} (m-2-1);
  \path[->>]
 (m-2-1) edge  node[above] {$\sim$} node[below] {$\scriptstyle{q}$} (m-2-2);
\end{tikzpicture}
\caption{Functorial factorizations}\label{Fact}
\end{center}
\end{figure}

(1) The module $U$ is the free non-negatively graded $\cD$-module with homogeneous basis $$\bigcup\,\{s^{-1}\mathbb{I}_{b_n},\mathbb{I}_{b_n}\}\;,$$ where the union is over all $b_n\in B_n$ and all $n>0$, and where $\deg({s^{-1}\mathbb{I}_{b_n}})=n-1$ and $\deg(\mathbb{I}_{b_n})=n\,.$ In other words, the module $U$ is a direct sum of copies of the discs $$D^n=\cD \cdot \mbi_{b_n}\oplus \cD \cdot s^{-1}\mbi_{b_n}\;,$$ $n>0$. The differentials $$s^{-1}:D^n\ni \mathbb{I}_{b_n}\to s^{-1}\mathbb{I}_{b_n}\in D^n$$ induce a differential $d_U$ in $U$, which in turn implements a differential $d_S$ in $\cS U$. The differential $d_1$ is then given by $d_1=d_A\0\id+\id\0 d_S\,.$ The trivial cofibration $i:A\to A\0\cS U$ is a split {\small RS$\cD\!$A} defined by $i:\frak{a}\mapsto\frak{a}\0 1_\cO$, and the fibration $p:A\0\cS U\to B$ is defined by $p=\zm_B\circ (\zf\0 \ze)$, where $\zm_B$ is the multiplication of $B$ and where $\ze(\mbi_{b_n})=b_n$ and $\ze(s^{-1}\mbi_{b_n})=d_Bb_n\,$.\medskip

(2) The module $V$ is the free non-negatively graded $\cD$-module with homogeneous basis $$\bigcup\,\{s^{-1}\mathbb{I}_{b_n},\mathbb{I}_{b_n},\mbi_{\zb_n},\mbi^1_{\zs_n,\fb_{n+1}},\mbi^2_{\zs_n,\fb_{n+1}},\ldots,\mbi^k_{\zs_n,\fb_{n+1}},\ldots\}\;,$$ where the union is over all $b_n\in B_n$, $n>0,$ all $\zb_n\in\ker_nd_B$, $n\ge 0$, and all pairs $$(\zs_n,\fb_{n+1}),\; n\ge 0,\;\, \text{in}\;\, {\frak B}_0,{\frak B}_1,\ldots, {\frak B}_k, \ldots,\;$$ respectively. The sequence of sets $${\frak B}_{k-1}=\{(\zs_n,\fb_{n+1}):\zs_n\in\ker\zd_{k-1}, \fb_{n+1}\in d_B^{-1}\{q_{k-1}\zs_n\},n\ge 0\}$$ is defined inductively, together with an increasing sequence of differential graded $\cD$-algebras $(A\0\cS V_k,\zd_k)$ and a sequence of morphisms $q_k:A\0\cS V_k\to B$, by means of formulas of the type (\ref{Critset}) - (\ref{MorpNewGen}) (see also (\ref{Newg}) - (\ref{Morp})). The degrees of the generators of $V$ are \be\label{GenDeg}n-1,\,n,\,n,\,n+1,\,n+1,\ldots, n+1,\ldots\ee The differential graded $\cD$-algebra $(A\0\cS V,d_2)$ is the colimit of the preceding increasing sequence of algebras: \be\label{DefD2}d_2|_{A\0\cS V_k}=\zd_k\;.\ee The trivial fibration $q:A\0\cS V\to B$ is induced by the $q_k$-s via universality of the colimit: \be\label{DefQ}q|_{A\0\cS V_k}=q_k\;.\ee {Finally}, the cofibration $j:A\to A\0\cS V$ is a (non-split) {\small RS$\cD\!$A}, which is defined as in (1) as the canonical inclusion; the canonical inclusion $j_k:A\to A\0\cS V_k\,$, $k>0\,$, is also a (non-split) {\small RS$\cD\!$A}, whereas $j_0:A\to A\0\cS V_0$ is a split {\small RS$\cD\!$A}.
\end{theo}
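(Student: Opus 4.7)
The plan is to verify that the explicit constructions of $(i,p)$ and $(j,q)$ laid out in the preamble to the statement satisfy the claimed properties. Part (1) is essentially already carried out in Subsection \ref{Condition2}: that argument produces $i$ as a weak equivalence and $p$ as a fibration with $p\circ i=\zf$, and the well-ordering of the generating set $G$ described just before the theorem promotes $i$ to a split {\small RS$\cD$A} (hence a cofibration by Theorem \ref{Cof}). Functoriality of $(i,p)$ was verified via the diagram (\ref{CompMor00}).

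For part (2), I would fix the inductive construction as follows. For the base step $k=0$, Equations (\ref{Newg})--(\ref{Morp}) together with Lemma \ref{DiffGen} and Lemma \ref{MorpGen} give a well-defined algebra $(R_0,\zd_0)$ and morphism $q_0:R_0\to B$. For the inductive step, given $(R_{k-1},\zd_{k-1},q_{k-1})$, I set $V_k:=V_{k-1}\oplus G_k$ with $G_k$ the free non-negatively graded $\cD$-module on $\{\mbi^k_{\zs_n,\fb_{n+1}}\}_{(\zs_n,\fb_{n+1})\in {\frak B}_{k-1}}$ where $\deg \mbi^k_{\zs_n,\fb_{n+1}}=n+1$. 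Lemma \ref{LemRSA}(i), applied with $T=R_{k-1}$ and the assignment $\zd_k\mbi^k_{\zs_n,\fb_{n+1}}:=\zs_n\in (R_{k-1})_n\cap \ker\zd_{k-1}$, produces a differential $\zd_k$ on $R_k=R_{k-1}\0\cS G_k$ extending $\zd_{k-1}$; Lemma \ref{LemRSA}(ii), with $p=q_{k-1}$ and $q_k\mbi^k_{\zs_n,\fb_{n+1}}:=\fb_{n+1}$, extends $q_{k-1}$ to a morphism $q_k:R_k\to B$, where the required compatibility $d_B\fb_{n+1}=q_{k-1}\zs_n$ is precisely the defining condition of ${\frak B}_{k-1}$. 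The colimit of the resulting $\zw$-sequence coincides set-theoretically with $R_V=\bigcup_k R_k=A\0\cS V$ by the argument used in the proof of Theorem \ref{Reduction}(i), and this determines $d_2$ and $q$ by (\ref{DefD2}) and (\ref{DefQ}).

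Next I would verify the three properties of $(j,q)$. That $j$ is a {\small RS$\cD$A}, and hence a cofibration by Theorem \ref{Cof}, follows from ordering the basis of $V$ lexicographically, first by the stage index $k\ge 0$ and then by the well-order already chosen on $G$ within stage $0$ (extended to include the generators $\mbi_{\zb_n}$ of $G_0$) and on each critical set within subsequent stages: the lowering condition (\ref{Lowering}) is immediate from $\zd_k\mbi^k_{\zs_n,\fb_{n+1}}=\zs_n\in R_{k-1}$ for $k\ge 1$, and reduces at stage $0$ to the verification already performed in Section \ref{Factorizations} (noting $d_2\mbi_{\zb_n}=0$). The morphism $q$ is a fibration because $q(1_A\0\mbi_{b_n})=b_n$ for every $b_n\in B_n$, $n>0$. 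Surjectivity of $H(q)$ holds because every cycle $\zb_n\in\ker d_B$ satisfies $[\zb_n]=H(q)[1_A\0\mbi_{\zb_n}]$. Injectivity of $H(q)$ holds because any $\zs_n\in\ker d_2$ with $q\zs_n=d_B\fb_{n+1}$ for some $\fb_{n+1}\in B_{n+1}$ lies in some $R_k$, whence $(\zs_n,\fb_{n+1})\in {\frak B}_k$, and then by construction $d_2(1_{R_k}\0\mbi^{k+1}_{\zs_n,\fb_{n+1}})=\zs_n$ in $R_{k+1}\subset R_V$.

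The main obstacle, and the reason for some of the apparent redundancy in the construction, is functoriality. Given a commuting $\tt DG\cD A$-square with $u:A\to A'$ and $v:B\to B'$, I would build a $\cD$-linear map $\tilde{v}:V\to V'$ inductively by extending the stage-$0$ construction from part (1) via $\tilde{v}\mbi_{\zb_n}:=\mbi_{v\zb_n}$ at stage $0$, and $\tilde{v}\mbi^k_{\zs_n,\fb_{n+1}}:=\mbi^k_{(u\0\tilde{v})\zs_n,\,v\fb_{n+1}}$ at stage $k\ge 1$, then setting $w:=u\0\tilde{v}:A\0\cS V\to A'\0\cS V'$. The critical observation is that $v$ carries $\ker d_B$ into $\ker d_{B'}$ and (by induction) $u\0\tilde{v}$ carries $\ker\zd_{k-1}$ into $\ker\zd_{k-1}'$; this is precisely what makes indexing each $G_k$ by the \emph{full} critical set ${\frak B}_{k-1}$ -- rather than by a minimal generating subset -- indispensable, and it is also why the generators $\mbi_{\zb_n}$ of $G_0$ are indexed by all cycles rather than by chosen representatives of homology classes (cf.\ the markers $(\rhd_1), (\rhd_2), (\rhd_3)$ in the preamble). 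Compatibility of $w$ with $\zd_k,\zd_k'$ and with $q_k,q_k'$ then follows routinely from Lemma \ref{LemRSA}, and the naturality of $(j,q)$ results from passing to the colimit.
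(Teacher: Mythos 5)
Your proposal is correct and follows essentially the same route as the paper's own proof (Appendices 6 and 7): the same inductive construction of $(\zd_k,q_k)$ via Lemma \ref{LemRSA}, the same homology argument showing $q$ is a trivial fibration, and the same stagewise construction of the comparison morphism for functoriality, including the key observation that indexing $G_k$ by the full critical sets ${\frak B}_{k-1}$ (and $G_0$ by all cycles) is what makes the factorization natural. The only cosmetic difference is your well-ordering of the basis of $V$ (lexicographically by stage, then within each stage) versus the paper's (by degree, then by generator type); both satisfy the lowering condition.
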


\begin{proof} See Appendix \ref{CofTrivFibProof}. \end{proof}

\begin{rem}\label{NonFuncFact}
{\em \begin{itemize}
\item If we are content with a non-functorial `{\small Cof -- TrivFib}' factorization, we may consider the colimit $A\otimes\cS {\cal V}$ of the sequence $A\otimes\cS {\cal V}_k$ that is obtained by adding only generators (see ($\rhd_1$)) $$\mbi_{\dot\zb_n},\;\, n\ge 0,\;\, \dot\zb_n\simeq[\zb_n]\in H_n(B)\;,$$ and by adding only generators (see ($\rhd_2$) and ($\rhd_3$)) $$\mbi_{\zs_n}^1,\mbi_{\zs_n}^2,\ldots,\;\, n\ge 0,\;\, \zs_n\in {\cal B}_0,{\cal B}_1\setminus{\cal B}_0,\ldots\;$$
\item An explicit description of the functorial fibrant and cofibrant replacement functors, induced by the `{\small TrivCof -- Fib}' and `{\small Cof -- TrivFib}' decompositions of Theorem \ref{P:c-tf_tc-f}, can be found in Appendix \ref{FunctReplFunct}.
\end{itemize}}
\end{rem}

\section{First remarks on Koszul-Tate resolutions}

In this last section, we {provide a first} insight into Koszul-Tate resolutions. {The Koszul-Tate resolution ({\small KTR}), which is used in Mathematical Physics and more precisely in \cite{Bar}, relies on horizontal differential operators, whose coordinate expression contains total derivatives. For instance, in the case of a unique base coordinate $t$, the total derivative with respect to $t$ is $$D_t=\p_t+\dot{q}\p_q+\ddot q\p_{\dot q}+\ldots\;,$$ where $q,\dot q,\ddot q,\ldots$ are infinite jet bundle fiber coordinates. The main concept of the jet bundle formalism is the Cartan connection $\cal C$, which allows to lift base differential operators $\p_t$ acting on base functions $\cO=\cO(X)$ to horizontal differential operators $D_t$ acting on the functions $\cO(J^\infty E)$ of the infinite jet bundle $J^\infty E\to X$ of a vector bundle $E\to X$. 
Hence, the total derivative $D_t^kF$ of a jet bundle function $F$ can be viewed as the action $\p_t^k\cdot F$ on $F$ of the corresponding base-derivative. In other words, one defines this action as the natural action by the corresponding lifted operator. Jet bundle functions $\cO(J^\infty E)$ thus become an algebra $$\cO(J^\infty E)\in\tt \cD A$$ over $\cD=\cD(X)$. In our algebraic geometric setting, there exists an infinite jet bundle functor $\cJ^\infty:{\tt \cO A}\to {\tt \cD A}$, which transforms the algebra $\cO(E)\in\cO\tt A$ of vector bundle functions into an algebra $$\cJ^\infty(\cO(E))\in\cD\tt A\;.$$ The latter is the algebraic geometric counterpart of the $\cD$-algebra $\cO(J^\infty E)$ used in smooth geometry. Recall now that (the prolongation of) a partial differential equation on the sections of $E$ can be viewed as an algebraic equation on the points of $J^\infty E$. The solutions of the latter form the critical surface $\zS\subset J^\infty E$. The function algebra $$\cO(J^\infty E)/I(\zS)\in\tt\cD A$$ of this stationary surface $\zS$ is the quotient of the $\cD$-algebra $\cO(J^\infty E)$ by the $\cD$-ideal $I(\zS)$ of those jet bundle functions that vanish on $\zS$. The Koszul-Tate resolution resolves this quotient. Now, for any $\cD$-ideal $\cI$, we think about $$\cJ^\infty(\cO(E))/\cI\in\tt \cD A$$ as the function algebra of some critical locus $\zS$. In our model categorical context, its (Koszul-Tate) resolution should be related to a cofibrant replacement of $\cJ^\infty(\cO(E))/\cI\in\tt \cD A$ in the model category $\tt DG\cD A$. This will be explained in detail below. Let us stress again, before proceeding, that in the present model categorical setting, the algebra $\cD$ is the algebra $\cD_X(X)$ of global sections of the sheaf $\cD_X$, where $X$ is a smooth affine algebraic variety.}\medskip

{In a separate paper \cite{PP}, we will give a new, general, and precise definition of Koszul-Tate resolutions. Instead of defining, as in homological algebra, a {\small KTR} for the quotient of some type of ring by an ideal, we will consider a (sheaf of) $\cD_X$-algebra(s) $\cA$ over an arbitrary smooth scheme $X$, as well as a differential graded $\cD_X$-algebra (sheaf) morphism $\zf:\cA\to\cB$. We will denote by $\cA[\cD_X]$ the ring of differential operators on $X$ with coefficients in $\cA$ and will define a $\cD$-geometric {\small KTR} of $\zf$ as a differential graded $\cA[\cD_X]$-algebra morphism $\psi:\cC\to\cB$, whose definition mimics the essential characteristics of our model categorical or cofibrant replacement {\small KTR} here above. It will turn out that such a {\small KTR} does always exist. We will further show that the {\small KTR} of a quotient ring \cite{Tate}, the {\small KTR} used in Mathematical Physics \cite{HT}, the {\small KTR} implemented by a compatibility complex \cite{Ver}, as well as our model categorical {\small KTR}, are all $\cD$-geometric Koszul-Tate resolutions. We will actually give precise comparison results for these Koszul-Tate resolutions, thus providing a kind of dictionary between different fields of science and their specific languages.}\medskip

Hence, the present section should be viewed as an introduction to topics on which we will elaborate in \cite{PP}.

\subsection{Undercategories of model categories}

When recalling that the coproduct in $\tt DG\cD A$ is the tensor product, we get from \cite{Hir2} that:

\begin{prop} For any differential graded $\cD$-algebra $A$, the coslice category $A\downarrow \tt DG\cD A$ carries a cofibrantly generated model structure given by the adjoint pair $L_{\0}:{\tt DG\cD A}\rightleftarrows A\downarrow{\tt DG\cD A}:\op{For}$, in the sense that its distinguished morphism classes are defined by $\op{For}$ and its generating cofibrations and generating trivial cofibrations are given by $L_\0\,$.\end{prop}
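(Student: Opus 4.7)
The plan is to reduce the statement to a well-known general fact about coslice categories of cofibrantly generated model categories, and then verify that the adjoint pair stated here is precisely the one produced by that general construction. Concretely, for any cofibrantly generated model category $\tt M$ with generating sets $I$ and $J$, and any object $A\in\tt M$, the coslice $A\downarrow\tt M$ inherits a cofibrantly generated model structure in which a morphism $f$ is a weak equivalence (resp.\ fibration, resp.\ cofibration) exactly when $\op{For}(f)$ is so in $\tt M$, and whose generating (trivial) cofibrations are obtained by applying to $I$ (resp.\ $J$) the left adjoint to $\op{For}:A\downarrow\tt M\to\tt M$. This is precisely what is recorded in \cite{Hir2}, and I would simply invoke it with $\tt M=DG\cD A$, which is cofibrantly generated by Theorem \ref{FinGenModDGDA}, with $I=\cS I$ and $J=\cS J$ in the earlier notation.

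The remaining content is then identification of the left adjoint. The forgetful functor $\op{For}:A\downarrow{\tt DG\cD A}\to {\tt DG\cD A}$ sends $(A\to B)$ to $B$. Its left adjoint must send an object $C\in\tt DG\cD A$ to the coproduct of $A$ and $C$ in $\tt DG\cD A$, equipped with the canonical structure map from $A$. Since $\tt DG\cD A$ is the category of commutative monoids in $\tt DG\cD M$, its binary coproduct is the tensor product $\boxtimes_{\cO}$ (see Proposition \ref{ProdTensProd}, which supplies the required universal property), so the adjoint is
\benn
L_{\0}(C)=\bigl(A\ni a\mapsto a\0 1_{\cO}\in A\0_{\cO}C\bigr),
\eenn
which is exactly the functor named in the statement. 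The adjunction
\benn
\op{Hom}_{A\downarrow{\tt DG\cD A}}(L_{\0}C,\, A\to B)\;\simeq\;\op{Hom}_{\tt DG\cD A}(C,B)
\eenn
is then just the usual universal property of the coproduct, applied in the form provided by Proposition \ref{ProdTensProd}.

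Having identified the adjunction, the rest is bookkeeping from the general theorem: the distinguished classes in $A\downarrow{\tt DG\cD A}$ are created by $\op{For}$ from those of $\tt DG\cD A$, the two-out-of-three and retract axioms transport immediately from $\tt DG\cD A$, and the lifting/factorization axioms follow from the fact that a commutative square (resp.\ a factorization problem) in $A\downarrow{\tt DG\cD A}$ is the same datum as one in $\tt DG\cD A$ together with a compatible structure map out of $A$, which is automatically preserved by the lift or factorization produced in $\tt DG\cD A$. Finally, the generating sets for $A\downarrow{\tt DG\cD A}$ are $L_{\0}(\cS I)$ and $L_{\0}(\cS J)$, and smallness of their domains relative to the relevant cell complexes transfers from the smallness already established in $\tt DG\cD A$ (Section after Theorem \ref{FinGenModDGDA}) via the adjunction and the fact that $\op{For}$ commutes with sequential, and more generally filtered, colimits (as in Proposition \ref{Enrichment}).

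The only point that requires any real thought, rather than citation, is the transport of smallness through $L_{\0}$, and this is where I would spend most of the write-up: for any chain $\colim_{\zb<\zl}(A\to B_{\zb})$ in $A\downarrow{\tt DG\cD A}$ of relative $L_{\0}(\cS I)$-cell maps, one writes
\benn
\op{Hom}_{A\downarrow{\tt DG\cD A}}(L_{\0}\cS {\frak S}_{\bullet},\colim B_{\zb})\simeq \op{Hom}_{\tt DG\cD A}(\cS {\frak S}_{\bullet},\colim B_{\zb})\simeq \colim \op{Hom}_{\tt DG\cD A}(\cS {\frak S}_{\bullet},B_{\zb}),
\eenn
using the adjunction and then the already-proved smallness of $\cS {\frak S}_{\bullet}$ in $\tt DG\cD A$, and concludes by undoing the adjunction on each term. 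This is the only mildly technical step; everything else is direct verification or an appeal to \cite{Hir2}.
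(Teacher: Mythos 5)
Your proposal is correct and follows essentially the same route as the paper, which likewise obtains the coslice model structure by invoking \cite{Hir2} after observing that the coproduct in $\tt DG\cD A$ is the tensor product, so that the left adjoint of $\op{For}$ is $A\0_\cO-$ with its canonical structure map. The additional details you supply (the universal property via Proposition \ref{ProdTensProd} and the transfer of smallness through the adjunction) are accurate elaborations of what the paper delegates to the cited reference.
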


\subsection{Basics of jet bundle formalism}

The jet bundle formalism allows for a coordinate-free approach to partial differential equations ({\small PDE}-s), i.e., to (not necessarily linear) differential operators ({\small DO}-s) acting between sections of smooth vector bundles (the confinement to vector bundles does not appear in more advanced approaches). To uncover the main ideas, we implicitly consider in this subsection trivialized line bundles $E$ over a 1-dimensional manifold $X$, i.e., we assume that $E\simeq \R\times\R$.\medskip

The key-aspect of the jet bundle approach to {\small PDE}-s is the passage to purely algebraic equations. Consider the order $k$ differential equation ({\small DE}) \be\label{PDE}F(t,\zf(t),d_t\zf,\ldots, d_t^k\zf)=F(t,\zf,\zf',\ldots,\zf^{(k)})|_{j^k\zf}=0\;,\ee where $(t,\zf,\zf',\ldots,\zf^{(k)})$ are coordinates of the $k$-th jet space $J^kE$ and where $j^k\zf$ is the $k$-jet of the section $\zf(t)$. Note that the algebraic equation \be\label{AlgE}F(t,\zf,\zf',\ldots,\zf^{(k)})=0\ee defines a `surface' $\cE^k\subset J^kE$, and that a solution of the considered {\small DE} is nothing but a section $\zf(t)$ whose $k$-jet is located on $\cE^k$.\medskip

A second fundamental feature is that one prefers replacing the original system of {\small PDE}-s by an enlarged system, its infinite prolongation, which also takes into account the consequences of the original one. More precisely, if $\zf(t)$ satisfies the original {\small PDE}, we have also $$d^\ell_t(F(t,\zf(t),d_t\zf,\ldots,d_t^k\zf))=(\p_t+\zf'\p_\zf+\zf''\p_{\zf'}+\ldots)^\ell F(t,\zf,\zf',\ldots,\zf^{(k)})|_{j^\infty\zf}=:$$ \be\label{ProlPDE}D_t^\ell F(t,\zf,\zf',\ldots,\zf^{(k)})|_{j^\infty\zf}=0,\;\forall \ell\in \N\;.\ee Let us stress that the `total derivative' $D_t$ or horizontal lift $D_t$ of $d_t$ is actually an infinite sum. The two systems of {\small PDE}-s, (\ref{PDE}) and (\ref{ProlPDE}), have clearly the same solutions, so we may focus just as well on (\ref{ProlPDE}). The corresponding algebraic system \be\label{ProlAlgE}D_t^\ell F(t,\zf,\zf',\ldots,\zf^{(k)})=0,\;\forall\ell\in\N\;\ee defines a `surface' $\cE^\infty$ in the infinite jet bundle $\zp_\infty:J^\infty E\to X$. A solution of the original system (\ref{PDE}) is now a section $\zf\in\zG(X,E)$ such that $(j^\infty\zf)(X)\subset{\cal E}^\infty$. The `surface' $\cE^\infty$ is often referred to as the `stationary surface' or the `shell'.\medskip

The just described passage from prolonged {\small PDE}-s to prolonged algebraic equations involves the lift of differential operators $d_t^\ell$ acting on $\cO(X)=\zG(X,X\times\R)$ (resp., sending -- more generally -- sections $\zG(X,G)$ of some vector bundle to sections $\zG(X,K)$), to horizontal differential operators $D_t^\ell$ acting on $\cO(J^\infty E)$ (resp., acting from $\zG(J^\infty E,\zp_\infty^*G)$ to $\zG(J^\infty E,\zp_\infty^*K)$). As seen from Equation (\ref{ProlPDE}), this lift is defined by $$(D_t^\ell F)\circ{j^\infty\zf}=d_t^\ell(F\circ j^\infty \zf)\;$$ (note that composites of the type $F\circ j^\infty\zf$, where $F$ is a section of the pullback bundle $\zp_\infty^* G$, are sections of $G$). The interesting observation is that the jet bundle formalism naturally leads to a systematic base change $X\rightsquigarrow J^\infty E$. The remark is fundamental in the sense that both the classical Koszul-Tate resolution (i.e., the Tate extension of the Koszul resolution of a regular surface) and Verbovetsky's Koszul-Tate resolution (i.e., the resolution induced by the compatibility complex of the linearization of the equation), use the jet formalism to resolve on-shell functions $\cO(\cE^\infty)$, and {thus contain the} base change $\bullet\to X$ $\;\rightsquigarrow\;$ $\bullet\to J^\infty E$. This means, dually, that we pass from $\tt DG\cD A$, i.e., from the coslice category $\cO(X)\downarrow \tt DG\cD A$ to the coslice category $\cO(J^\infty E)\downarrow \tt DG\cD A$.

\subsection{{Revisiting classical} Koszul-Tate resolution}

We first recall the local construction of the {\bf Koszul resolution} of the function algebra $\cO(\zS)$ of a regular surface $\zS\subset\R^n$. Such a surface $\zS$, say of codimension $r$, can locally always be described -- in appropriate coordinates -- by the equations \be\label{E}\zS:x^a=0,\;\forall a\in\{1,\ldots,r\}\;.\ee The Koszul resolution of $\cO(\zS)$ is then the chain complex made of {the free Grassmann algebra, i.e., the free graded commutative algebra} $$\op{K}=\cO(\R^n)\otimes \cS[\zf^{a*}]$$ on $r$ odd generators $\zf^{a*}$ -- associated to the equations (\ref{E}) -- and of the Koszul differential \be\label{KDiff}\zd_{\op{K}}=x^a\p_{\zf^{a*}}\;.\ee Of course, the claim that this complex is a resolution of $\cO(\zS)$ means that the homology of $(\op{K},\zd_{\op{K}})$ is given by \be\label{Homology}H_0(\op{K})=\cO(\zS)\quad\text{and}\quad H_k(\op{K})=0,\;\forall k>0\;.\ee\medskip

The {\bf Koszul-Tate resolution} of the algebra $\cO(\cE^\infty)$ of on-shell functions is a generalization of the preceding Koszul resolution. In gauge field theory (our main target), $\cE^\infty$ is the stationary surface given by a system \be\label{Eq}\cE^\infty: D_x^\za F_i=0,\;\forall \za,i\;\ee of prolonged algebraized (see (\ref{ProlAlgE})) Euler-Lagrange equations that correspond to some action functional (here $x\in\R^p$ and $\za\in\N^p$). However, there is a difference between the situations (\ref{E}) and (\ref{Eq}): in the latter, there exist gauge symmetries that implement Noether identities and their extensions -- i.e., extensions \be\label{NI}D_x^\zb\; G_{j\za}^i\,D_x^\za F_i=0,\;\forall \zb,j\;\ee of $\cO(J^\infty E)$-linear relations $G_{j\za}^i\,D_x^\za F_i=0$ between the equations $D_x^\za F_i=0$ of $\cE^\infty$, which do not have any counterpart in the former. It turns out that, to kill the homology (see (\ref{Homology})), we must introduce additional generators that take into account these relations. More precisely, we do not only associate degree 1 generators $\zf^{\za*}_i$ to the equations (\ref{Eq}), but assign further degree 2 generators $C^{\zb*}_j$ to the relations (\ref{NI}). The Koszul-Tate resolution of $\cO(\cE^\infty)$ is then (under appropriate irreducibility and regularity conditions) the chain complex, whose chains are the elements of the free Grassmann algebra \be\label{KT}\op{KT}=\cO(J^{\infty}E)\otimes \cS[\zf^{\za*}_i,C^{\zb*}_j]\;,\ee and whose differential is defined in analogy with (\ref{KDiff}) by \be\label{KTDiff}\zd_{\op{KT}}=D^\za_xF_i\;\p_{\zf^{\za*}_i}+D_x^\zb\; G^i_{j\za}\,D_x^\za \zf^{*}_i\;\p_{C^{\zb*}_j}\;,\ee where we substituted $\zf^{*}_i$ to $F_i$ (and where total derivatives have to be interpreted in the extended sense that puts the `antifields' $\zf^{*}_i$ and $C^{*}_j$ on an equal footing with the `fields' $\zf^i$ (fiber coordinates of $E$), i.e., where we set $$D_{x^k}=\p_{x^k}+\zf^i_{k\za}\p_{\zf^i_\za}+\zf^{k\za *}_i\p_{\zf^{\za *}_i}+C^{k\zb *}_j\p_{C^{\zb *}_j}\,)\;.$$ The homology of this Koszul-Tate chain complex is actually concentrated in degree 0, where it coincides with $\cO(\cE^\infty)$ (compare with (\ref{Homology})) {\cite{HT}}.

\subsection{$\cD$-algebraic version of the Koszul-Tate resolution}

In this subsection, we briefly report on the $\cD$-algebraic approach to `Koszul-Tate' (see \cite{PP} for additional details).

\begin{prop} The functor $$\op{For}:\tt \cD A\to \cO A$$ has a left adjoint $${\cal J}^{\infty}:\tt \cO A\to \cD A\;,$$ i.e., for $B\in\tt \cO A$ and $A\in\tt \cD A$, we have \be\label{JetAlg}\h_{\tt\cD A}({\cal J}^{\infty}(B),A)\simeq \h_{\tt \cO A}(B,\op{For}(A))\;,\ee functorially in $A,B$.\end{prop}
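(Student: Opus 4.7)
The plan is to construct $\cJ^\infty$ explicitly by combining two adjunctions already at hand, together with a quotient step to account for the multiplicative structure of the source. At the module level, the induction functor $\cD\0_\cO-:{\tt \cO M}\rightleftarrows{\tt \cD M}:\op{For}$ is left adjoint to restriction of scalars, a standard consequence of the inclusion of $\cO$ as a central subring of $\cD$. At the algebra level, the free graded-symmetric functor $\cS_\cO^\ast:{\tt \cD M}\rightleftarrows{\tt \cD A}:\op{For}$ is left adjoint to the forgetful functor; this is the ungraded analogue of Equation (\ref{Adjoint}), proved in exactly the same way. Composing the two gives a left adjoint to the total forgetful functor ${\tt \cD A}\to{\tt \cO M}$, but we seek a left adjoint to the stronger forgetful functor ${\tt \cD A}\to{\tt \cO A}$, which retains the multiplicative structure of $B$; hence the necessity of an additional quotient.

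Concretely, for $B\in{\tt \cO A}$, I set $\cJ^\infty(B):=\cS_\cO^\ast(\cD\0_\cO B)/I_B$, where $I_B$ is the smallest $\cD$-ideal of $\cS_\cO^\ast(\cD\0_\cO B)$, i.e., the smallest ideal of this commutative $\cO$-algebra which is stable under the $\cD$-action, containing the elements $1\0 1_B-1$ and $1\0 (bb')-(1\0 b)\odot(1\0 b')$ for all $b,b'\in B$. Since $\cD$-ideals are closed under arbitrary intersections, $I_B$ is well defined, and the quotient inherits both a commutative $\cO$-algebra structure and a compatible $\cD$-action with vector fields acting as derivations; thus $\cJ^\infty(B)\in{\tt \cD A}$. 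The unit $\eta_B:B\to\op{For}(\cJ^\infty(B))$ is defined by $\eta_B(b)=[1\0 b]$: it is $\cO$-linear because the $\cO$-action on $\cD\0_\cO B$ factors through $\cO\subset\cD$, and it is multiplicative and unital by the very choice of $I_B$.

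To establish the universal property, I consider any $\cO$-algebra morphism $f:B\to\op{For}(A)$ with $A\in{\tt\cD A}$. The module adjunction transforms the underlying $\cO$-linear map into a $\cD$-linear map $f^\sharp:\cD\0_\cO B\to A$, $f^\sharp(D\0 b)=D\cdot f(b)$, and the free-algebra adjunction promotes $f^\sharp$ to a $\cD$-algebra morphism $\tilde f:\cS_\cO^\ast(\cD\0_\cO B)\to A$ determined on the canonical generators by $\tilde f(1\0 b)=f(b)$. The critical verification is that $\tilde f$ vanishes on $I_B$: since $f$ is an algebra morphism, $\tilde f(1\0(bb'))=f(bb')=f(b)\star_A f(b')=\tilde f((1\0 b)\odot(1\0 b'))$, and the unit relation is treated analogously. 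Hence $\tilde f$ descends to a $\cD$-algebra morphism $\bar f:\cJ^\infty(B)\to A$ satisfying $\op{For}(\bar f)\circ\eta_B=f$, and $\bar f$ is unique because $\cJ^\infty(B)$ is generated as a $\cD$-algebra by the image of $\eta_B$ by construction.

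The one technical point requiring care is the assertion that $I_B$ is a genuine $\cD$-ideal, equivalently that the $\cD$-action descends to the quotient with vector fields still acting as derivations; this is a routine check once one observes that intersections of $\cD$-ideals are $\cD$-ideals and that $\eta_B$ is forced to be multiplicative and unital by construction. Granted this, functoriality of $\cJ^\infty$ and naturality in both arguments of the bijection $\h_{{\tt \cD A}}(\cJ^\infty(B),A)\simeq\h_{{\tt \cO A}}(B,\op{For}(A))$ follow from the universal property by standard formal arguments, completing the proof.
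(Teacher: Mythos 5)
The paper does not actually prove this proposition: it is stated without argument and the reader is referred to \cite{PP} for the details of the $\cD$-algebraic approach, so there is no in-text proof to compare yours against. Your construction is the standard one for the jet functor (it is essentially the construction of Beilinson--Drinfeld), and it is correct: you compose the induction adjunction $\cD\0_\cO-\dashv\op{For}$ at the module level with the free-algebra adjunction $\cS_\cO^\ast\dashv\op{For}$ (the degree-zero case of the paper's Equation (\ref{Adjoint})), and then quotient by the smallest $\cD$-stable ideal forcing the unit $b\mapsto[1\0 b]$ to be a morphism of unital $\cO$-algebras. All the verifications go through: $I_B$ exists because arbitrary intersections of $\cD$-stable ideals are $\cD$-stable ideals; the quotient inherits a $\cD$-algebra structure because vector fields act by derivations, hence preserve the property of being an ideal once they preserve the generating $\cD$-submodule; the extension $\tilde f$ kills $I_B$ because the kernel of a $\cD$-algebra morphism is itself a $\cD$-stable ideal containing the generators; and uniqueness follows since $\cS_\cO^\ast(\cD\0_\cO B)$ is generated as a $\cD$-algebra by the elements $1\0 b$.

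One justification you give is wrong, though harmlessly so: $\cO$ is \emph{not} a central subring of $\cD$ (a vector field $\zy$ and a function $f$ satisfy $\zy f-f\zy=\zy(f)\ne 0$ in general). The extension-of-scalars adjunction $\h_{\cD{\tt M}}(\cD\0_\cO M,N)\simeq\h_{\cO{\tt M}}(M,\op{For}N)$ does not require centrality; it only requires that $\cD$ carry a $(\cD,\cO)$-bimodule structure via the ring morphism $\cO\to\cD$, which it does. You should also make explicit (it is implicit in your write-up) that the $\cD$-module structure on $\cS_\cO^\ast(\cD\0_\cO M)$ is the one extended by the Leibniz rule over $\0_\cO$, as in the paper's Subsection \ref{Adjunction}, since this is what makes vector fields act as derivations and hence makes the ideal generated by a $\cD$-submodule again $\cD$-stable. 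With those two points repaired or clarified, the proof is complete.
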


Let now $\zp:E\to X$ be a smooth map of smooth affine algebraic varieties (or a smooth vector bundle). The function algebra $B=\cO(E)$ (in the vector bundle case, we only consider those smooth functions on $E$ that are polynomial along the fibers, i.e., $\cO(E):=\zG(\cS E^*)$) is canonically an $\cO$-algebra, so that the jet algebra ${\cal J}^{\infty}(\cO(E))$ is a $\cD$-algebra. The latter can be thought of as the $\cD$-algebraic counterpart of $\cO(J^\infty E)$. Just as we considered above a scalar {\small PDE} with unknown in $\zG(E)$ as a function $F\in\cO(J^\infty E)$ (see (\ref{AlgE})), an element $P\in{\cal J}^{\infty}(\cO(E))$ can be viewed as a polynomial {\small PDE} acting on sections of $\zp:E\to X$. Finally, the $\cD$-algebraic version of on-shell functions $\cO(\cE^\infty)=\cO(J^\infty E)/(F)$ is the quotient ${\cal R}(E,P):=\cJ^\infty(\cO(E))/(P)$ of the jet $\cD$-algebra by the $\cD$-ideal $(P)$.\medskip

A first candidate for a Koszul-Tate resolution of ${\cal R}:={\cal R}(E,P)\in\tt \cD A$ is of course the cofibrant replacement of ${\cal R}$ in $\tt DG\cD A$ given by the functorial `{\small Cof -- TrivFib}' factorization of Theorem \ref{P:c-tf_tc-f}, when applied to the canonical $\tt DG\cD A$-morphism $\cO\to {\cal R}$. Indeed, this decomposition implements a functorial cofibrant replacement functor $Q$ (see Theorem \ref{FCRF} below) with value $Q({\cal R})=\cS V$ described in Theorem \ref{P:c-tf_tc-f}: $$\cO\rightarrowtail \cS V\stackrel{\sim}{\twoheadrightarrow}{\cal R}\;.$$ Since ${\cal R}$ is concentrated in degree 0 and has 0 differential, it is clear that $H_k(\cS V)$ vanishes, except in degree 0 where it coincides with ${\cal R}$.\medskip

As already mentioned, we propose a general and precise definition of a Koszul-Tate resolution in \cite{PP}. Although such a definition does not seem to exist in the literature, {the classical Koszul-Tate resolution of the quotient of a commutative ring $k$ by an ideal $I$ is a $k$-algebra that resolves $k/I$}.\medskip

The natural idea -- to get a ${\cal J}^\infty(\cO(E))$-algebra -- is to replace $\cS V$ by ${\cal J}^\infty(\cO(E))\0 \cS V$, and, more precisely, to consider the `{\small Cof -- TrivFib}' decomposition $${\cal J}^\infty(\cO(E))\rightarrowtail {\cal J}^\infty(\cO(E))\0 \cS V\stackrel{\sim}{\twoheadrightarrow}{\cal J}^\infty(\cO(E))/(P)\;.$$ The {\small DG$\cD$A} \be\label{KTD} {\cal J}^\infty(\cO(E))\0 \cS V\ee {\it is} a {\bf $\cJ^\infty(\cO(E))$-algebra} that {\bf resolves} ${\cal R}=\cJ^\infty(\cO(E))/(P)$, but it is of course {\it not} a cofibrant replacement, since the left algebra is not the initial object $\cO$ in $\tt DG\cD A$ (further, the considered factorization does not canonically induce a cofibrant replacement in $\tt DG\cD A$, since it can be shown that the morphism $\cO\to {\cal J}^\infty(\cO(E))$ is not a cofibration). However, as emphasized above, the Koszul-Tate problem requires a passage from $\tt DG\cD A$ to ${\cal J}^\infty(\cO(E))\downarrow \tt DG\cD A$. It is easily checked that, in the latter undercategory, ${\cal J}^\infty(\cO(E))\0 \cS V$ is a {\bf cofibrant replacement} of ${\cal J}^\infty(\cO(E))/(P)$. To further illuminate the $\cD$-algebraic approach to Koszul-Tate, let us mention why the complex (\ref{KT}) is of the same type as (\ref{KTD}). Just as the variables $\zf^{(k)}$ (see (\ref{PDE})) are algebraizations of the derivatives $d_t^k\zf$ of a section $\zf$ of a vector bundle $E\to X$ (fields), the generators $\zf^{\za*}_i$ and $C^{\zb*}_j$ (see (\ref{Eq}) and (\ref{NI})) symbolize the total derivatives $D_x^\za\zf^*_i$ and $D_x^\zb C^*_j$ of sections $\zf^*$ and $C^*$ of some vector bundles $\zp_\infty^*F_1\to J^\infty E$ and $\zp_\infty^*F_2\to J^\infty E$ (antifields). Hence, the $\zf^{\za*}_i$ and $C^{\zb*}_j$ can be thought of as the horizontal jet bundle coordinates of $\zp_\infty^*F_1$ and $\zp_\infty^*F_2\,$. These coordinates may of course be denoted by other symbols, e.g., by $\p_x^\za\cdot\zf_i^*$ and $\p_x^\zb\cdot C_j^*$, provided {we define the $\cD$-action $\cdot$ as} the action $D_x^\za\zf^*_i$ and $D_x^\zb C^*_j$ by the corresponding horizontal lift, so that we get appropriate interpretations when the $\zf^*_i$-s and the $C^*_j$-s are the components of true sections. This convention allows us to write {$$\op{KT}=J\0\cS[\p_x^\za\cdot\zf_i^*,\p_x^\zb\cdot C_j^*]=J\0_\cO\cS_\cO(\oplus_i\,\cD\cdot\zf_i^*\;\bigoplus\; \oplus_j\,\cD\cdot C_j^*)\;,$$} where $J={\cal J}^\infty(\cO(E))\,,$ so that the space (\ref{KT}) is really of the type (\ref{KTD}). Let us emphasize that (\ref{KT}) and (\ref{KTD}), although of the same type, are of course not equal (for instance, the classical Koszul-Tate resolution is far from being functorial). For further details, see \cite{PP}.

\section{Appendices}

The following appendices do not contain new results but might have a pedagogical value. Various (also online) sources were used. Notation is the same as in the main part of the text.

\subsection{Appendix 1 -- Quasi-coherent sheaves of modules}\label{FinCondShMod}

A {\bf quasi-coherent $\cR$-module} is an object $\cP\in{\tt Mod}(\cR)$ that is locally presented, i.e., for any $x\in X$, there is a neighborhood $U\ni x$, such that there is an exact sequence of sheaves
\be\label{QuaCoh}\cR^{K_U}|_{U}\to \cR^{J_U}|_{U}\to \cP|_{U}\to 0\;,\ee where $\cR^{K_U}$ and $\cR^{J_U}$ are (not necessarily finite) direct sums. Let us recall that an infinite direct sum of sheaves need not be a sheaf, so that a sheafification is required. The category ${\tt qcMod}(\cR)$ of quasi-coherent $\cR$-modules is not abelian in general, but is abelian in the context of Algebraic Geometry, i.e., if $\cR$ is the function sheaf of a scheme.

\subsection{Appendix 2 -- $\cD$-modules}\label{D-modules}

We already indicated that $\cD$-modules are fundamental in algebraic analysis: they allow us to apply methods of homological algebra and sheaf theory to the study of systems of {\small PDE}-s \cite{KS}.\medskip

We first explain the key idea of Proposition \ref{DModFlatConnSh} considering -- to simplify -- {global sections} instead of sheaves.\medskip

We denote by $\cD$ the ring of differential operators acting on functions of a suitable base space $X$, e.g., a finite-dimensional smooth manifold \cite{Cos}. A {$\cD$-module} $M\in{\tt Mod}(\cD)$ (resp., $M\in{\tt Mod}(\cD^{\op{op}})$) is a left (resp., right) module over the noncommutative ring $\cD$. Since {\it $\cD$ is generated by smooth functions $f\in\cO$ and smooth vector fields $\theta\in\Theta$, modulo the obvious commutation relations between these types of generators, a $\cD$-action on an $\cO$-module $M\in{\tt Mod}(\cO)$ is completely defined if it is given for vector fields and satisfies the natural compatibility conditions}.
More precisely, let $$\cdot\,:\cO\times M\ni(f,m)\mapsto f\cdot m\in M$$ be the $\cO$-action, and let \be\label{FC1}\nabla:\Theta\times M\ni (\theta,m)\mapsto \nabla_\theta m\in M\ee be an $\R$-bilinear `$\Theta$-action'. For $f\in\cO$ and $\theta,\theta'\in\Theta$, we then necessarily extend $\nabla$ by defining the action $\nabla_{\theta\theta'}$ (resp., $\nabla_{\theta f}$) of the differential operator $\theta\theta'=\theta\circ\theta'$ (resp., $\theta f=\theta\circ f$) by $$\nabla_{\theta\theta'}:=\nabla_\theta\nabla_{\theta'}$$ (resp., $$\nabla_{\theta f}:=\nabla_\theta(f\,\cdot\,-))\;.$$ Since $f\zy=f\circ\zy$, we get the compatibility condition \be\label{FC2}\nabla_{f\theta}=f\cdot\nabla_\theta\;,\ee and, as $\theta f=f\theta+\theta(f)$ (resp., $\theta \theta'=\theta'\theta+[\theta,\theta']$) -- where $\theta(f)$ (resp., $[\theta,\theta']$) denotes the Lie derivative $L_\theta f$ of $f$ with respect to $\theta$ (resp., the Lie bracket of the vector fields $\theta,\theta'$), we also find the compatibility relations \be\label{FC3}\nabla_{\theta}(f\,\cdot\,-)=f\cdot \nabla_{\theta}+\theta(f)\,\cdot\,-\,\ee (resp., \be\label{FC4}\nabla_\theta\nabla_{\theta'}=\nabla_{\theta'}\nabla_\theta+\nabla_{[\theta,\theta']})\;.\ee Hence, if the compatibility conditions (\ref{FC2}) -- (\ref{FC4}) hold, we defined the unique structure of left $\cD$-module on $M$ that extends the `action of $\Theta$'. In view of Equations (\ref{FC1}) -- (\ref{FC4}), {\it a $\cD$-module structure on $M\in{\tt Mod}(\cO)$ is the same as a flat connection on $M$}. \medskip

When resuming now our explanations given in Subsection \ref{D-ModulesAlgebras}, we understand that a morphism $\nabla$ of sheaves of $\K$-vector spaces satisfying the conditions (1) -- (3) is exactly a family of $\cD_X(U)$-modules $\cM_X(U)$, $U\in{\tt Open}_X$, such that the $\cD_X(U)$-actions are compatible with restrictions, i.e., is exactly a $\cD_X$-module structure on the considered sheaf $\cM_X$ of $\cO_X$-modules.\medskip

Typical examples of $\mathcal{D}$-modules are:
\begin{itemize}
\item $\cO\in{\tt Mod}(\cD)$ with action $\nabla_\theta=L_\theta$,
\item the top differential forms $\zW^{\op{top}}\in{\tt Mod}(\cD^{\op{op}})$ with action $\nabla_\theta=-L_\theta$, and
\item $\cD\in{\tt Mod}(\cD)\cap{\tt Mod}(\cD^{\op{op}})$ with action given by left and right compositions.
\end{itemize}

\subsection{Appendix 3 -- Sheaves versus global sections}\label{ShVsGlobSec}

In Classical Differential Geometry, the fundamental spaces (resp., operators), e.g., vector fields, differential forms... (resp., the Lie derivative, the de Rham differential...) are sheaves (resp., sheaf morphisms). Despite this sheaf-theoretic nature, most textbooks present Differential Geometry in terms of global sections and morphisms between them. Since these sections are sections of vector bundles (resp., these global morphisms are local operators), restriction and gluing is canonical (resp., the existence of smooth bump functions allows us to localize the global morphisms in such a way that they commute with restrictions; e.g., for the de Rham differential, we have $$(d|_U\zw_U)|_V=\lp d(\za_V\zw_U)\rp|_V\quad{\text{and}}\quad d|_U\zw|_U=(d\zw)|_U\;,$$ where $\za_V$ is a bump function with constant value 1 in $V\subset U$ and support in $U$). Such global viewpoints are not possible in the real-analytic and holomorphic settings, since no interesting analytic bump {functions exist}.\medskip

{There are a number} of well-known results on the equivalence of categories of sheaves and the corresponding categories of global sections, essentially when the topological space underlying the considered sheaves is an affine scheme or variety. In the present paper, we use the fact that, for an affine scheme $(X,\cO_X)$, there is an equivalence \cite{Har} \be\label{ShVsSectqcMod}\zG(X,\bullet): {\tt qcMod}(\cO_X)\rightleftarrows {\tt Mod}(\cO_X(X)):\widetilde{\bullet}\ee between the category of quasi-coherent $\cO_X$-modules and the category of $\cO_X(X)$-modules. The functor $\widetilde{\bullet}$ is isomorphic to the functor $\cO_X\0_{\cO_X(X)}\bullet\,$.

\subsection{Appendix 4 -- Model categories}\label{ModCat}

Quite a few non-equivalent definitions of model categories and cofibrantly generated model categories can be found in the literature. In this paper, we use the definitions of \cite{Hov} and of \cite{GS}.\medskip

In the definition of {\bf model categories}, both texts \cite{Hov} and \cite{GS} assume the existence of all small limits and colimits in the underlying category -- in contrast with Quillen's original definition, which asks only for the existence of finite limits and colimits. However, the two references use different `cofibration - trivial fibration' and `trivial cofibration - fibration' factorization axioms {\small MC5}. Indeed, in \cite{GS}, the authors use Quillen's original axiom, which merely requires the existence of the two factorizations, whereas in \cite{Hov}, the author requires the factorizations to be functorial, and even includes the choice of a pair of such functorial factorizations in the axioms of the model structure. However, this difference does not play any role in the present paper, since we are dealing with cofibrantly generated model categories, so that a choice of functorial factorizations is always possible via the small object argument.\medskip

For the definitions of cofibrantly generated model structures, some preparation is needed.\medskip

An {\bf ordinal $\zl$ is filtered with respect to a cardinal $\zk$}, if $\zl$ is a limit ordinal such that the supremum of a subset of $\zl$ of cardinality at most $\zk$ is smaller than $\zl$. {\it This condition is actually a largeness condition for $\zl$ with respect to $\zk$}: if $\zl$ is $\zk$-filtered for $\zk>\zk'$, then $\zl$ is also $\zk'$-filtered. For a finite cardinal $\zk$, a $\zk$-filtered ordinal is just a limit ordinal.\medskip

Smallness of an object $A$ in a category $\tt C$ (assumed to have all small colimits) is defined with respect to a class of morphisms $W$ in $\tt C$ and a cardinal $\zk$ (that can depend on $A$) \cite{Hov}. The point is that the covariant Hom-functor $${\tt C}(A,\bullet):=\op{Hom}_{\tt C}(A,\bullet)$$ commutes with limits, but usually not with colimits. However, {\it if the considered sequence is sufficiently large with respect to $A$, then commutation may be proven}. More precisely, for $A\in\tt C$, we consider the colimits of all the $\zl$-sequences (with arrows in $W$) for all $\zk$-filtered ordinals $\zl$ (usually for $\zk=\zk(A)$), and try to prove that the covariant Hom-functor ${\tt C}(A,\bullet)$ commutes with these colimits. In this case, we say that {\bf $A\in\tt C$ is small with respect to $\zk$ and $W$}. Of course, {\it if $\zk<\zk'$, then $\zk$-smallness implies $\zk'$-smallness}.\medskip

In \cite{GS}, `small' (with respect to $W$) means `sequentially small': the covariant Hom-functor commutes with the colimits of the $\zw$-sequences. This concept matches the notion `$n$-small', i.e., small relative to a finite cardinal $n\in\N$: the covariant Hom-functor commutes with the colimits of the $\zl$-sequences for all limit ordinals $\zl$. In \cite{Hov}, `small' (relative to $W$) means $\zk$-small for some $\zk$: the covariant Hom-functor commutes with the colimits of all the $\zl$-sequences for all the $\zk$-filtered ordinals $\zl$. It is clear that $n$-small implies $\zk$-small, for any $\zk>n$.\medskip

More precisely, a $\zl$-sequence in $\tt C$ is a colimit respecting functor $X:\zl\to \tt C$. Usually this diagram is denoted by $X_0\to X_1\to\ldots\to X_\zb\to \ldots$ It is natural to refer to the map $$X_0\to \op{colim}_{\zb<\zl}X_\zb$$ as the composite of the $\zl$-sequence $X$. If $W$ is a class of morphisms in $\tt C$ and every map $X_\zb\to X_{\zb+1}$, $\zb+1<\zl$, is in $W$, we refer to the composite $X_0\to \op{colim}_{\zb<\zl}X_\zb$ as a transfinite composition of maps in $W$. Let us also recall that, if we have a commutative square in $\tt C$, the right down arrow is said to be the pushout of the left down arrow. We now denote by {\bf $W$-cell} the class of {\bf transfinite compositions of pushouts of arrows in $W$}. It turns out that $W$-cell is a subclass of the class $\op{LLP}(\op{RLP}(W))$ (where notation is self-explaining).\medskip

We are now prepared to give the finite and the transfinite definitions of a cofibrantly generated model category.\smallskip

A model category is {\bf cofibrantly generated} \cite{GS}, if there exist {\it sets} of morphisms $I$ and $J$, which generate the cofibrations and the trivial cofibrations, respectively, i.e., more precisely, if there are sets $I$ and $J$ such that

\begin{enumerate}

\item the source of every morphism in $I$ is sequentially small with respect to the class $\op{Cof}$, and $\op{TrivFib}=\op{RLP}(I)\,$,

\item the source of every morphism in $J$ is sequentially small with respect to the class $\op{TrivCof}$, and $\op{Fib}=\op{RLP}(J)\,$.

\end{enumerate}
It then follows that $I$ and $J$ are actually the generating cofibrations and the generating trivial cofibrations: $$\op{Cof}=\op{LLP}(\op{RLP}(I))\quad\text{and}\quad\op{TrivCof}=\op{LLP}(\op{RLP}(J))\;.$$

Alternatively, a model category is {\bf cofibrantly generated} \cite{Hov}, if there exist sets $I$ and $J$ of maps such that

\begin{enumerate}
\item the domains of the maps in $I$ are small ($\zk$-small for some fixed $\zk$) relative to $I$-cell, and $\op{TrivFib}=\op{RLP}(I)\,$,

\item the domains of the maps in $J$ are small ($\zk$-small for some fixed $\zk$) relative to $J$-cell, and $\op{Fib}=\op{RLP}(J)\,$.
\end{enumerate}
It is clear that the finite definition \cite{GS} is stronger than the transfinite one \cite{Hov}. First, $n$-smallness implies $\zk$-smallness, and, second, smallness with respect to $\op{Cof}$ (resp., $\op{TrivCof}$) implies smallness with respect to $I$-cell (resp., $J$-cell).\medskip

The model structures we study in the present paper are all {\it finitely} generated. A {\bf finitely generated model structure} \cite{Hov} is a cofibrantly generated model structure, such that $I$ and $J$ can be chosen so that their sources and targets are $n$-small, $n\in\N$, relative to $\op{Cof}$. This implies in particular that our model structures are cofibrantly generated in the sense of \cite{GS}.\medskip

For more information on model categories, we refer the reader to \cite{GS}, \cite{Hir}, \cite{Hov}, and \cite{Quill}. The background material on category theory can be found in \cite{Bor1}, \cite{Bor2}, and \cite{Mac}.

\subsection{Appendix 5 -- Invariants versus coinvariants}\label{InvCoinv}

If $G$ is a (multiplicative) group and $k$ a commutative unital ring, we denote by $k[G]$ the group $k$-algebra of $G$ (the free $k$-module made of all formal finite linear combinations $\sum_{g\in G} r(g)\, g$ with coefficients in $k$, endowed with the unital ring multiplication that extends the group multiplication by linearity).\medskip

In the following, we use notation of Subsection \ref{Adjunction}. Observe that $\0^n_\cO M_\bullet$ is a module over the group $\cO$-algebra $\cO[\mathbb{S}_n]$, where $\mathbb{S}_n$ denotes the $n$-th symmetric group. There is an $\cO$-module isomorphism \be\label{Iso1}{\cal S}_\cO^n M_\bullet=\0_\cO^n M_\bullet/{\cal I}\cap \0_\cO^n M_\bullet\simeq (\0_\cO^n M_\bullet)_{\mathbb{S}_n}:=\0_\cO^n M_\bullet/\langle T-\zs\cdot T\ra\;,\ee where $(\0_\cO^n M_\bullet)_{\mathbb{S}_n}$ is the $\cO$-module of $\mathbb{S}_n$-coinvariants and where the denominator is the $\cO$-submodule generated by the elements of the type $T-\zs\cdot T$, $T\in\0_\cO^n M_\bullet$, $\zs\in\mathbb{S}_n$ (a Koszul sign is incorporated in the action of $\zs$). It is known that, since the cardinality of $\mathbb{S}_n$ is invertible in $\cO$, we have also an $\cO$-module isomorphism \be\label{Iso2}(\0_\cO^n M_\bullet)_{\mathbb{S}_n}\simeq(\0_\cO^n M_\bullet)^{\mathbb{S}_n}:=\{T\in \0_\cO^n M_\bullet: \zs\cdot T=T,\forall \zs\in\mathbb{S}_n\}\;\ee between the $\mathbb{S}_n$-coinvariants and the $\mathbb{S}_n$-invariants. The averaging map or graded symmetrization operator \be\label{SymOp}{\frak S}:\0_\cO^n M_\bullet\ni T\mapsto \frac{1}{n!}\sum_{\zs\in\mathbb{S}_n}\zs\cdot T\in(\0_\cO^n M_\bullet)^{\mathbb{S}_n}\;\ee coincides with identity on $(\0_\cO^n M_\bullet)^{\mathbb{S}_n}$, what implies that it is surjective. When viewed as defined on coinvariants $(\0_\cO^n M_\bullet)_{\mathbb{S}_n}\,$, it provides the mentioned isomorphism (\ref{Iso2}). It is straightforwardly checked that the graded symmetric multiplication $\vee$ on $(\0_\cO^\ast M_\bullet)^{\mathbb{S}_\ast}$, defined by \be\label{Mult2}{\frak S}(S)\vee {\frak S}(T)={\frak S}({\frak S}(S)\0 {\frak S}(T))\;,\ee endows $(\0_\cO^\ast M_\bullet)^{\mathbb{S}_\ast}$ with a {\small DG} $\cD$-algebra structure, and that the $\cO$-module isomorphism \be\label{Alg2}{\cal S}_\cO^\ast M_\bullet\simeq \{T\in \0_\cO^\ast M_\bullet: \zs\cdot T=T,\forall \zs\in\mathbb{S}_\ast\}\ee is in fact a $\tt DG\cD A$-isomorphism.

\subsection{Appendix 6 -- Proof of Theorem \ref{P:c-tf_tc-f}}\label{CofTrivFibProof}

The proof of functoriality of the decompositions will be given in Appendix \ref{FunctReplFunct}. Thus, only Part (2) requires immediate explanations. We use again the above-introduced notation $R_k=A\0\cS V_k$; we also set $R=A\0\cS V$. The multiplication in $R_k$ (resp., in $R$) will be denoted by $\diamond_k$ (resp., $\diamond$). \medskip

To show that $j$ is a {\small RS$\cD\!$A}, we have to check that $A$ is a differential graded $\cD$-subalgebra of $R$, that the basis of $V$ is indexed by a well-ordered set and that $d_2$ is lowering.

The main idea to keep in mind is that $R=\bigcup_kR_k\,$ -- so that any element of $R$ belongs to some $R_k$ in the increasing sequence $R_0\subset R_1\subset \ldots\,$ -- and that the {\small DG$\cD$A} structure on $R$ is defined in the standard manner. For instance, the product of ${\frak a}\0 X,{\frak b}\0 Y\in R\cap R_k$ is defined by $$({\frak a}\0 X)\diamond ({\frak b}\0 Y)=({\frak a}\0 X)\diamond_k ({\frak b}\0 Y)=(-1)^{\tilde X\tilde{\frak b}}({\frak a}\ast{\frak b})\0 (X\odot Y)\;,$$ where `tilde' (resp., $\ast$) denotes as usual the degree (resp., the multiplication in $A$). It follows that $\diamond$ restricts on $A$ to $\ast\,$. Similarly, $d_2|_A=\zd_0|_A=d_A$, in view of (\ref{DefD2}) and (\ref{NewgTot}). Finally, we see that $A$ satisfies actually the mentioned subalgebra condition.

We now order the basis of $V$. First, we well-order, for any fixed generator degree $m\in\N$ (see (\ref{GenDeg})), the sets \be\label{OrdSetsGen}\{s^{-1}\mbi_{b_{m+1}}\},\, \{\mbi_{b_m}\},\, \{\mbi_{\zb_m}\},\, \{\mbi^1_{\zs_{m-1},\fb_{m}}\},\, \{\mbi^2_{\zs_{m-1},\fb_{m}}\},\, \ldots\ee of degree $m$ generators of a given type (for $m=0$, only the sets $\{s^{-1}\mbi_{b_1}\}$ and $\{\mbi_{\zb_0}\}$ are non-empty). We totally order the set of all degree $m$ generators by totally ordering its partition (\ref{OrdSetsGen}): $$\{s^{-1}\mbi_{b_{m+1}}\}<\{\mbi_{b_m}\}<\{\mbi_{\zb_m}\}<\{\mbi^1_{\zs_{m-1},\fb_{m}}\}<\{\mbi^2_{\zs_{m-1},\fb_{m}}\}<\ldots\;$$ A total order on the set of all generators (of all degrees) is now obtained by declaring that any generator of degree $m$ is smaller than any generator of degree $m+1$. This total order is a well-ordering, since no infinite descending sequence exists in the set of all generators.

Finally, the differential $d_2$ sends the first and third types of generators (see (\ref{OrdSetsGen})) to 0 and it maps the second type to the first. Hence, so far $d_2$ is lowering. Further, we have $$d_2(\mbi^k_{\zs_{m-1},\fb_{m}})=\zs_{m-1}\in (R_{k-1})_{m-1}\;,$$ where $m-1$ refers to the term of degree $m-1$ in $R_{k-1}$. Since this term is generated by the generators $$\{s^{-1}\mbi_{b_{\ell+1}}\}, \{\mbi_{b_\ell}\},\{\mbi_{\zb_\ell}\},\{\mbi^1_{\zs_{\ell-1},\fb_{\ell}}\},\ldots, \{\mbi^{k-1}_{\zs_{\ell-1},\fb_{\ell}}\}\,,$$ where $\ell < m$, the differential $d_2$ is definitely lowering.\medskip

It remains to verify that the described construction yields a morphism $q:A\0\cS V\to B$ that is actually a trivial fibration.

Since fibrations are exactly the morphisms that are surjective in all positive degrees, and since $q|R_U=q_0|R_U=p$ is degree-wise surjective, it is clear that $q$ is a fibration. As for triviality, let $[\zb_n]\in H(B,d_B)$, $n\ge 0\,$. Since $\mbi_{\zb_n}\in \ker \zd_0\subset \ker d_2$, the homology class $[\mbi_{\zb_n}]\in H(R,d_2)$ makes sense; moreover, $$H(q)[\mbi_{\zb_n}]=[q\mbi_{\zb_n}]=[q_0\mbi_{\zb_n}]=[\zb_n]\;,$$ so that $H(q)$ is surjective. {Finally}, let $[\zs_n]\in H(R,d_2)$ and assume that $H(q)[\zs_n]=0$, i.e., that $q\zs_n\in\im d_B$. Since there is a lowest $k\in\N$ such that $\zs_n\in R_k$, we have $\zs_n\in\ker\zd_k$ and $q_k\zs_n=d_B\fb_{n+1}$, for some $\fb_{n+1}\in B_{n+1}$. Hence, a pair $(\zs_n,\fb_{n+1})\in{\frak B}_k$ and a generator $\mbi^{k+1}_{\zs_n,\fb_{n+1}}\in R_{k+1}\subset R$. Since $$\zs_n=\zd_{k+1}\mbi^{k+1}_{\zs_n,\fb_{n+1}}=d_2\mbi^{k+1}_{\zs_n,\fb_{n+1}}\;,$$ we obtain that $[\zs_n]=0$ and that $H(q)$ is injective.

\subsection{Appendix 7 -- Explicit functorial cofibrant replacement functor}\label{FunctReplFunct}

(1) We proved in Subsection \ref{Condition2} that the factorization $(i,p)=(i(\zf),p(\zf))$ of the $\tt DG\cD A$-morphisms $\zf$, described in Theorem \ref{P:c-tf_tc-f}, is functorial:

\begin{prop} In $\tt DG\cD A$, the functorial fibrant replacement functor $R$, which is induced by the functorial `{\small TrivCof -- Fib}' factorization $(i,p)$ of Theorem \ref{P:c-tf_tc-f}, is the identity functor $R=\id\,$. \end{prop}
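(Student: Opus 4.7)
The plan is to unpack the fibrant replacement functor $R$ induced by the `{\small TrivCof -- Fib}' factorization of Theorem~\ref{P:c-tf_tc-f} and observe that, when applied to terminal morphisms, it collapses to the identity. I would proceed in three short steps.

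First, I would establish that every object $A \in \tt DG\cD A$ is already fibrant. By Theorem~\ref{FinGenModDGDA}, fibrations are precisely the morphisms surjective in every strictly positive degree. The terminal object is $* = (\{0\},0)$, whose graded components all vanish, so the unique morphism $A \to *$ is vacuously surjective in positive degrees, hence a fibration. Consequently, the fibrant replacement of $A$ may be computed by factoring the terminal morphism $A \to *$ via the prescription of Theorem~\ref{P:c-tf_tc-f}.

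Second, I would specialize Theorem~\ref{P:c-tf_tc-f}(1) to this terminal morphism $\zf: A \to *$, so that $B = *$. The attached module is $U = P(*) = \bigoplus_{n>0}\bigoplus_{b_n \in *_n} D^n_\bullet$. Since $*_n$ contains no nontrivial elements, the only candidate generators $\mbi_{b_n}, s^{-1}\mbi_{b_n}$ would satisfy $\ze(\mbi_{b_n}) = b_n = 0$ and $\ze(s^{-1}\mbi_{b_n}) = d_* b_n = 0$; they contribute nothing beyond the unit and may be discarded from the indexing. Thus $U = 0$, $\cS U = \cO$, and $A \boxtimes \cS U \simeq A \otimes_\cO \cO = A$ as objects of $\tt DG\cD A$. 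Under this canonical identification, the trivial cofibration $i: \frak{a} \mapsto \frak{a} \otimes 1_\cO$ becomes $\id_A$ and the fibration $p = \zm_* \circ (\zf \otimes \ze)$ becomes the unique morphism to $*$.

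Third, since the factorization of Theorem~\ref{P:c-tf_tc-f} is functorial in its input morphism and the assignment $A \mapsto (A \to *)$ is natural in $A$, the induced functor $R$ sends each object to itself and each morphism to itself, i.e.\ $R = \id$.

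I expect the only subtle point to lie in Step~2: one must justify carefully the convention that the generators indexed by zero elements of $*_n$ produce no genuine new factors in $A \boxtimes \cS U$, so that the mechanical output of Theorem~\ref{P:c-tf_tc-f} really is $A$ on the nose and not merely isomorphic to it. Once this bookkeeping is settled, the remaining claims are purely formal.
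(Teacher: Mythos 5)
Your Step 1 is precisely the paper's entire argument: since fibrations in $\tt DG\cD A$ are the morphisms surjective in strictly positive degrees and the terminal object is $(\{0\},0)$, every object is fibrant, and the paper simply \emph{chooses} the identity as the (automatically functorial) fibrant replacement functor. Up to that point you and the paper agree completely, and that observation alone suffices to justify the Proposition as the paper intends it.

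Your Steps 2--3, however, try to prove something stronger -- that the mechanical output of the `{\small TrivCof -- Fib}' factorization applied to $A\to *$ is literally $A$ -- and Step 2 does not go through. The module attached to $\zf:A\to B$ is $U=P(B)=\bigoplus_{n>0}\bigoplus_{b_n\in B_n}D^n_\bullet$, a direct sum indexed by the \emph{elements} of the set $B_n$. For $B=*$ one has $B_n=\{0\}$, which is a one-element set, not the empty set; the construction therefore attaches one copy of $D^n_\bullet$ (with generators $\mbi_{0}$ and $s^{-1}\mbi_{0}$) for every $n>0$. There is no convention in the paper discarding generators indexed by zero elements -- indeed the functoriality proof in Subsection \ref{Condition2} relies on having a generator for \emph{every} element $b_n\in B_n$, zero included. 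So $U=\bigoplus_{n>0}D^n_\bullet\neq 0$ and the middle object is $A\0\cS\bigl(\bigoplus_{n>0}D^n_\bullet\bigr)$, which is weakly equivalent to $A$ via $i$ but not equal (nor even isomorphic as a graded algebra) to $A$. The "subtle point" you flagged is thus not a bookkeeping issue that can be settled in your favour; it is where the literal identification fails. The correct reading, and the paper's actual proof, is that fibrancy of all objects licenses taking $R=\id$ outright, without passing through the explicit factorization of the terminal morphism.
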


(2) To finish the proof of Theorem \ref{P:c-tf_tc-f}, we still have to show that the factorization $(j,q)$ is functorial, i.e., that for any commutative $\tt DG\cD A$-square \be\label{InitSq1}
\xymatrix{
A\ar[d]^{u} \ar[r]^{\zf}&B\ar[d]^{v\;\;\;,}\\
A'\ar[r]^{\zf'}&B'\\
}
\ee
there is a commutative $\tt DG\cD A$-diagram
\be\label{CompMor2}
\xymatrix{A\;\; \ar[d]_{u} \ar_{j:=j(\zf)}  @{>->} [r] & A\0\cS V \ar[d]^{\zw}\;\;\ar @{->>} [r]^{\sim}_{q:=q(\zf)} & B \ar[d]^{v\;\;\;.}\\
A'\;\; \ar @{>->} [r]_{j':=j(\zf')} & A'\0\cS V'\;\; \ar @{->>} [r]^{\sim}_{q':=q(\zf')} & B'\\
}
\ee\smallskip

Let us stress that the following proof fails, if we use the non-functorial factorization mentioned in Remark \ref{NonFuncFact} (the critical spots are marked by $\triangleleft\,$).

Just as we constructed in Section \ref{Factorizations}, the {\small RS$\cD$A} $R=A\0\cS V$ (resp., $R'=A'\0\cS V'$) as the colimit of a sequence $R_k=A\0\cS V_k$ (resp., $R'_k=A'\0\cS V'_k$), we will build $\zw\in{\tt DG\cD A}(R,R')$ as the colimit of a sequence \be\label{w_k}\zw_k\in{\tt DG\cD A}(R_k,R'_k)\;.\ee Recall moreover that $q$ is the colimit of a sequence $q_k\in{\tt DG\cD A}(R_k,B)$, and that $j$ is nothing but $j_k\in{\tt DG\cD A}(A,R_k)$ viewed as valued in the supalgebra $R$ -- and similarly for $q',q'_k,j',j'_k$. Since we look for a morphism $\zw$ that makes the left and right squares of the diagram (\ref{CompMor2}) commutative, we will construct $\zw_k$ so that \be\label{ComSqu}\zw_k\,j_k=j'_k\,u\;\,\text{and}\;\; v\,q_k=q'_k\,\zw_k\;.\ee

Since the {\small RS$\cD$A} $A\to R_0=A\0\cS V_0$ is split, we define $$\zw_0\in{\tt DG\cD A}(A\0\cS V_0, R'_0)$$ as \be\label{zw_0}\zw_0=j'_0\,u\diamond_0 w_0\;,\ee where we denoted the multiplication in $R'_0\,$ by the same symbol $\diamond_0$ as the multiplication in $R_0$, where $j'_0\,u\in{\tt DG\cD A}(A,R'_0)$, and where $w_0\in{\tt DG\cD A}(\cS V_0, R'_0)$. As the differential $\zd_{V_0}$, see Section \ref{Factorizations}, has been obtained via Lemma \ref{DiffGen}, the morphism $w_0$ can be built as described in Lemma \ref{MorpGen}: we set \be\label{w_0}w_0(s^{-1}\mbi_{b_n})=s^{-1}\mbi_{v(b_n)}\in V'_0\,,\;w_0(\mbi_{b_n})=\mbi_{v(b_n)}\in V'_0\,,\;\, \text{and}\;\; w_0(\mbi_{\zb_n})=\mbi_{v(\zb_n)}\in V'_0\;,\ee and easily check that $w_0\,\zd_{V_0}=\zd'_0\, w_0$ on the generators. The first commutation condition (\ref{ComSqu}) is obviously satisfied. As for the verification of the second condition, let $t={\frak a}\0 x_1\odot\ldots\odot x_\ell\in A\0\cS V_0$ and remember (see (\ref{Morp})) that $q_0=\zf\star q_{V_0}$ and $q'_0=\zf'\star q_{V'_0}\,$, where we denoted again the multiplications in $B$ and $B'$ by the same symbol $\star$. Then $$vq_0(t)=v\zf({\frak a})\star vq_{V_0}(x_1)\star\ldots\star vq_{V_0}(x_\ell)$$ and $$q'_0\zw_0(t)=q'_0j'_0u({\frak a})\star q'_0w_0(x_1)\star\ldots\star q'_0w_0(x_\ell)=\zf'u({\frak a})\star q'_0w_0(x_1)\star\ldots\star q'_0w_0(x_\ell)\;.$$ It thus suffices to show that $v\,q_{V_0}=q'_0\,w_0$ on the generators $s^{-1}\mbi_{b_n}, \mbi_{b_n}, \mbi_{\zb_n}$ of $V_0$, what follows from Equations (\ref{Morpg}) and (\ref{w_0}) ($\triangleleft_1$).

Assume now that the $\zw_\ell$ have been constructed according to the requirements (\ref{w_k}) and (\ref{ComSqu}), for all $\ell\in\{0,\dots,k-1\}$, and build their extension $$\zw_k\in{\tt DG\cD A}(R_k,R'_k)$$ as follows. Since $\zw_{k-1}$, viewed as valued in $R'_k$, is a morphism $\zw_{k-1}\in{\tt DG\cD A}(R_{k-1},R'_k)$ and since the differential $\zd_k$ of $R_k\simeq R_{k-1}\0 \cS G_k$, where $G_k$ is the free $\cD$-module $$G_k=\langle \mbi^k_{\zs_n,\fb_{n+1}}:(\zs_n,\fb_{n+1})\in{\frak B}_{k-1}\ra\;,$$ has been defined by means of Lemma \ref{LemRSA}, the morphism $\zw_k$ is, in view of the same lemma, completely defined by degree $n+1$ values $$\zw_k(\mbi^k_{\zs_n,\fb_{n+1}})\in \zd'^{-1}_k(\zw_{k-1}\zd_k(\mbi^k_{\zs_n,\fb_{n+1}}))\;.$$ As the last condition reads $$\zd'_k\,\zw_k(\mbi^k_{\zs_n,\fb_{n+1}})=\zw_{k-1}(\zs_n)\;,$$ it is natural to set \be\label{w_kDef}\zw_k(\mbi^k_{\zs_n,\fb_{n+1}})=\mbi^k_{\zw_{k-1}(\zs_n),v(\fb_{n+1})}\;,\ee provided we have $$(\zw_{k-1}(\zs_n),v(\fb_{n+1}))\in{\frak B}'_{k-1}\quad (\triangleleft_2)\;.$$ This requirement means that $\zd'_{k-1}\zw_{k-1}(\zs_n)=0$ and that $q'_{k-1}\zw_{k-1}(\zs_n)=d_{B'}\,v(\fb_{n+1})$. To see that both conditions hold, it suffices to remember that $(\zs_n,\fb_{n+1})\in{\frak B}_{k-1}$, that $\zw_{k-1}$ commutes with the differentials, and that it satisfies the second equation (\ref{ComSqu}). Hence the searched morphism $\zw_k\in{\tt DG\cD A}(R_k,R'_k)$, such that $\zw_k|_{R_{k-1}}=\zw_{k-1}$ (where the {\small RHS} is viewed as valued in $R'_k$). To finish the construction of $\zw_k$, we must still verify that $\zw_k$ complies with (\ref{ComSqu}). The first commutation relation is clearly satisfied. For the second, we consider $$r_k=r_{k-1}\0 g_1\odot\ldots\odot g_\ell\in R_{k-1}\0\cS G_k\;$$ and proceed as above: recalling that $\zw_k$ and $q_k$ have been defined via Equation (\ref{DefRSAMorph}) in Lemma \ref{LemRSA}, that $q'_k$ and $v$ are algebra morphisms, and that $\zw_{k-1}$ satisfies (\ref{ComSqu}), we see that it suffices to check that $q'_k\,\zw_k=v\,q_k$ on the generators $\mbi^k_{\zs_n,\fb_{n+1}}$ -- what follows immediately from the definitions ($\triangleleft_3$).

Remember now that $((R,d_2),i_r)$ is the direct limit of the direct system $((R_k,\zd_k),\iota_{sr})$, i.e., that
 \begin{equation} \begin{tikzpicture}
 \matrix (m) [matrix of math nodes, row sep=3em, column sep=3em]
   {  R_0  & \cdots & R_k & \cdots  \\
       & R \\ };
 \path[>->]
 (m-1-1) edge node[right] {\small{$i_0$}} (m-2-2)
 (m-1-3) edge node[right] {\small{$i_k$}} (m-2-2)
 (m-1-3) edge node[above] {\small{$\iota_{k+1,k}$}} (m-1-4)
 (m-1-1) edge node[above] {\small{$\iota_{10}$}}(m-1-2)
 (m-1-2) edge node[above] {\small{$\iota_{k,k-1}$}}(m-1-3);
\end{tikzpicture}\;
\end{equation}
where all arrows are canonical inclusions, and that the same holds for $((R',d'_2),i'_r)$ and $((R'_k,\zd'_k),\iota'_{sr})$. Since the just defined morphisms $\zw_k$ provide morphisms $i'_k\,\zw_k\in{\tt DG\cD A}(R_k,R')$ (such that the required commutations hold -- as $\zw_k|_{R_0}=\zw_0$), it follows from universality that there is a unique morphism $\zw\in{\tt DG\cD A}(R,R')$, such that $\zw\,i_k=i'_k\,\zw_k\,$, i.e., such that \be\label{w}\zw|_{R_k}=\zw_k\;.\ee When using the last result, one easily concludes that $\zw\,j=j'\,u$ and $v\,q=q'\,\zw\,$.\medskip

This completes the proof of Theorem \ref{P:c-tf_tc-f}.\medskip

\begin{rem} The preceding proof of functoriality fails for the factorization of Remark \ref{NonFuncFact}. The latter adds only one new generator $\mbi_{\dot\zb_n}$ for each homology class $\dot\zb_n\simeq[\zb_n]$, and it adds only one new generator $\mbi^k_{\zs_n}$ for each $\zs_n\in {\cal B}_{k-1}\setminus {\cal B}_{k-2}\,$, where $${\cal B}_r=\{\zs_n\in\ker\zd_r:q_r\zs_n\in \im d_B,n\ge 0\}\;.$$ In $(\,$$\triangleleft_1$$\,)$, we then get that $v\,q_{V_0}(\mbi_{\dot\zb_n})$ and $q'_0\,w_0(\mbi_{\dot\zb_n})$ are homologous, but not necessarily equal. In $(\,$$\triangleleft_2$$\,)$, although $\zs_n\in\cB_{k-1}\setminus\cB_{k-2}$, its image $\zw_{k-1}(\zs_n)\in\cB'_{k-1}$ may also belong to $\cB'_{k-2}\,$. {Finally}, in $(\,$$\triangleleft_3$$\,)$, we find that  $vq_k(\mbi^k_{\zs_n})$ and $q'_k\zw_k(\mbi^k_{\zs_n})$ differ by a cycle, but do not necessarily coincide.\end{rem}

The next result describes cofibrant replacements.

\begin{theo}\label{FCRF} In $\tt DG\cD A$, the functorial cofibrant replacement functor $Q$, which is induced by the functorial `{\small Cof -- TrivFib}' factorization $(j,q)$ described in Theorem \ref{P:c-tf_tc-f}, is defined on objects $B\in\tt DG\cD A$ by $Q(B)=\cS V_B$, see Theorem \ref{P:c-tf_tc-f} and set $A=\cO$, and on morphisms $v\in{\tt DG\cD A}(B,B')$ by $Q(v)=\zw$, see Equations (\ref{w}), (\ref{w_kDef}), and (\ref{w_0}), and set $\zw_0=w_0$. Moreover, the differential graded $\cD$-algebra $\cS{\cal V}_B$, see Proposition \ref{NonFuncFact} and set $A=\cO$, is a cofibrant replacement of $B$. \end{theo}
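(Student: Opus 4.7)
The plan is to specialize the functorial `{\small Cof -- TrivFib}' factorization of Theorem \ref{P:c-tf_tc-f} to the unique $\tt DG\cD A$-morphism $\zf_B:\cO\to B$ out of the initial object, and then read off both the object part and the morphism part of $Q$ from the construction. Since $\cO\0\cS V_B\simeq\cS V_B$ as {\small DG$\cD$A}, Theorem \ref{P:c-tf_tc-f} (2) applied to $A=\cO$ gives a factorization
$$
\cO\;\stackrel{j_B}{\rightarrowtail}\;\cS V_B\;\stackrel{q_B}{\stackrel{\sim}{\twoheadrightarrow}}\;B\,,
$$
in which $j_B$ is a {\small S$\cD$A} (hence a cofibration by Theorem \ref{Cof}) and $q_B$ is a trivial fibration. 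Thus $\cS V_B$ is a cofibrant replacement of $B$, and we set $Q(B):=\cS V_B$.

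For a morphism $v\in{\tt DG\cD A}(B,B')$, the plan is to apply the functoriality statement of Theorem \ref{P:c-tf_tc-f} (proved in Appendix \ref{FunctReplFunct}) to the commutative square whose vertical arrows are $u=\id_\cO$ and $v$, and whose horizontal arrows are $\zf_B$ and $\zf_{B'}$. This yields a unique $\tt DG\cD A$-morphism $\zw=\zw(v):\cS V_B\to \cS V_{B'}$ with $\zw\,j_B=j_{B'}$ and $q_{B'}\,\zw=v\,q_B$; we set $Q(v):=\zw(v)$. The explicit formulas on generators are then the ones obtained by specializing (\ref{w_0}) and (\ref{w_kDef}) to $u=\id_\cO$, namely $\zw_0=w_0$ on $\cS V_0$ and $\zw_k(\mbi^k_{\zs_n,\fb_{n+1}})=\mbi^k_{\zw_{k-1}(\zs_n),\,v(\fb_{n+1})}$ for $k\ge 1$.

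Next I would verify the two functoriality axioms for $Q$. That $Q(\id_B)=\id_{\cS V_B}$ is immediate: the recursive formulas above produce, at each stage, the identity on the prescribed set of generators (on $V_0$: $s^{-1}\mbi_{b_n}\mapsto s^{-1}\mbi_{b_n}$, $\mbi_{b_n}\mapsto\mbi_{b_n}$, $\mbi_{\zb_n}\mapsto\mbi_{\zb_n}$; on $G_k$: $\mbi^k_{\zs_n,\fb_{n+1}}\mapsto\mbi^k_{\zs_n,\fb_{n+1}}$), hence the identity on each $R_k$, and by universality of the colimit the identity on $\cS V_B$. For $Q(v'\circ v)=Q(v')\circ Q(v)$, I would proceed by induction on $k$. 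The base case $k=0$ is a direct check on the three types of generators of $V_0$, e.g. $w_0^{v'v}(\mbi_{\zb_n})=\mbi_{v'v(\zb_n)}=w_0^{v'}(\mbi_{v(\zb_n)})=w_0^{v'}\,w_0^v(\mbi_{\zb_n})$. For the inductive step, one uses the explicit formula on the new generators: assuming $\zw_{k-1}^{v'v}=\zw_{k-1}^{v'}\circ\zw_{k-1}^v$ (and noting that both sides send the pair $(\zs_n,\fb_{n+1})\in{\frak B}_{k-1}^B$ to a pair in ${\frak B}_{k-1}^{B'}$ via intermediate membership in ${\frak B}_{k-1}^{B'}$ -- compare with point $\triangleleft_2$ in Appendix \ref{FunctReplFunct}, which is precisely the place where the \emph{functorial} choice of generators is crucial), one computes
$$
\zw_k^{v'}\zw_k^v(\mbi^k_{\zs_n,\fb_{n+1}})=\zw_k^{v'}(\mbi^k_{\zw_{k-1}^v(\zs_n),\,v(\fb_{n+1})})=\mbi^k_{\zw_{k-1}^{v'}\zw_{k-1}^v(\zs_n),\,v'v(\fb_{n+1})}=\zw_k^{v'v}(\mbi^k_{\zs_n,\fb_{n+1}})\,,
$$
so the two $\tt DG\cD A$-morphisms agree on a generating set of $R_k$ and therefore agree on $R_k$. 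Passing to the colimit yields $Q(v'\circ v)=Q(v')\circ Q(v)$ on $\cS V_B$.

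For the last assertion, I would treat the non-functorial variant $\cS\cV_B$ of Remark \ref{NonFuncFact} in exactly the same manner: the colimit construction produces a {\small S$\cD$A} $\cO\rightarrowtail\cS\cV_B$ (hence a cofibration by Theorem \ref{Cof}) together with a morphism $\cS\cV_B\to B$ that is a fibration by construction and whose induced map on homology is bijective by the same argument as in Appendix \ref{CofTrivFibProof} (surjectivity uses the generators $\mbi_{\dot\zb_n}$, injectivity uses the successive generators $\mbi^k_{\zs_n}$), so $\cS\cV_B$ is a cofibrant replacement of $B$. The main obstacle in the whole proof is the inductive step of functoriality for composition: one must make sure that the generators added at stage $k$ are indexed by a set that is itself functorial in $B$, which is exactly why, in Theorem \ref{P:c-tf_tc-f}, one indexes by pairs $(\zs_n,\fb_{n+1})\in{\frak B}_{k-1}$ rather than by homology classes or by ${\cal B}_{k-1}\setminus{\cal B}_{k-2}$; the variant of Remark \ref{NonFuncFact} does \emph{not} lift to a functor $Q$ on $\tt DG\cD A$ for precisely this reason.
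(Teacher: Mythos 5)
Your proposal is correct and follows essentially the same route the paper takes: the paper offers no separate argument for Theorem \ref{FCRF}, treating it as the specialization $A=\cO$ of the factorization of Theorem \ref{P:c-tf_tc-f} together with the functoriality established in Appendix \ref{FunctReplFunct} (and Theorem \ref{Cof} to recognize $j$ as a cofibration), which is exactly what you do. Your explicit check of the functor axioms $Q(\id_B)=\id$ and $Q(v'\circ v)=Q(v')\circ Q(v)$ on the generators, and your remark that the pair-indexing by ${\frak B}_{k-1}$ is what makes this work while the variant of Remark \ref{NonFuncFact} only yields an object-wise cofibrant replacement, are left implicit in the paper but are accurate and welcome additions.
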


\section{Acknowledgments}
The authors are grateful to Jim Stasheff for having read the first version of the present
paper. His comments and suggestions allowed to significantly improve their text.

\end{document}